\newtheorem{thm}{Theorem}[section]
\newtheorem{prop}[thm]{Proposition}
\newtheorem{lem}[thm]{Lemma}
\newtheorem{rem}[thm]{Remark}
\newtheorem{definition}[thm]{Definition} 
\numberwithin{equation}{section}
\newcommand{\Q}{\mathbb{Q}}
\newcommand{\Z}{\mathbb{Z}}
\newcommand{\C}{\mathbb{C}}
\newcommand{\F}{\mathbb{F}}
\newcommand{\T}{\mathfrak{t}}
\begin{document}

\title{ A  description based on Schubert classes  \\
      of   \\
   cohomology  of  flag manifolds   }
\author{Masaki Nakagawa$^*$ }

\pagestyle{plain}

\subjclass[2000]{ 
Primary 57T15; Secondary 14M15.
}

\keywords{
 flag manifolds, Schubert calculus, Chow rings. 
}

\thanks{
$^*$ Partially supported by the Grant-in-Aid for Scientific  Research 
(C) 18540106, Japan Society for  the Promotion of Science. 
}

\address{Department of General Education \endgraf
                      Takamatsu National College of Technology \endgraf
                      Takamatsu 761-8058 \\ Japan}
\email{nakagawa@takamatsu-nct.ac.jp}

\maketitle

\begin{abstract}
We  describe  the integral cohomology   rings of the  flag manifolds of types  $B_{n}, D_{n}, G_{2}$
and  $F_{4}$ in terms of their  Schubert classes.  The main tool is the divided difference operators
of  Bernstein-Gelfand-Gelfand and  Demazure.  As an application, we compute the Chow rings of  
the corresponding complex algebraic groups,  recovering thereby  the results of R. Marlin. 
 \end{abstract} 

\section{Introduction} 
Let $K$ be a compact connected Lie group and $T$ its  maximal torus. The homogeneous space 
$K/T$, called the  flag manifold,  plays an important role in algebraic topology, 
algebraic geometry and representation theory. 

In this paper, we are concerned with the integral cohomology  of  the flag 
manifold $K/T$. As is well known,   there are two descriptions of the integral cohomology  of 
flag manifold. The first one is the ``{\it Borel presentation},"   due to  A. Borel \cite{Bor1}, 
which  identifies  the rational cohomology ring of $K/T$ with the quotient ring of a  polynomial 
ring by  its  ideal generated by $W$-invariants  of positive degrees, where $W$ is the Weyl 
group of $K$. Combining  Borel's result and the known structures of the mod $p$ 
cohomology rings of $K$, H. Toda gave  general descriptions of the integral cohomology 
rings of $K/T$ for all $K$ simple \cite{Toda1}.  So far  the integral cohomology rings 
of flag manifolds  for all compact simply connected  simple Lie groups  are  determined
(see \cite{Bor1}, \cite{Bott-Sam1},  \cite{Toda-Wat1}, \cite{Nak1}, \cite{Nak2}).
The second  is the ``{\it Schubert presentation}" which describes the integral cohomology  $H^*(K/T;\Z)$
in terms of the Schubert classes corresponding to the Schubert varieties  derived  from  the Bruhat 
decomposition of $G = K^{\C}$, the complexification of $K$. 

In the Borel presentation, which is given  by generators and relations, 
the ring structure of $H^*(K/T;\Z)$ can be relatively easy to  obtain. 
However,  the generators in this  presentation  have little geometric meaning.  In contrast, 
in the Schubert presentation,  the Schubert classes  correspond  to the geometric objects 
- the Schubert varieties - and they  form an additive basis for  $H^*(K/T;\Z)$ (\cite{Che1}). 
As a disadvantage,  the  multiplicative  structure among them  is closely related to  the intersection 
multiplicities, and is highly complicated in general.

Up to now, there have been several attempts to establish  a connection between  the two descriptions 
for some types of spaces (see, e.g.,  \cite{BGG1}, \cite{DZ1},  \cite{Ili-Man1}).  
The main  aim  of this paper also falls in this category.  More precisely, we express the ring 
generators in the Borel presentation of  $H^*(K/T;\Z)$ for $K = SO(n), G_2$ and $F_{4}$ 
in terms of Schubert classes.     For this, we make use of the ``{\it divided difference 
operators}" introduced independently by   Bernstein-Gelfand-Gelfand  \cite{BGG1} and 
Demazure  \cite{Dem1}.   For $K$ as above,  there exist  extra generators of degrees greater
than two.  So  we cannot  apply  the divided difference operators  directly to these  higher
generators.  Fortunately, using  the classical fact that,  rationally, the cohomology of $K/T$ is 
generated as a ring by two dimensional elements  and the integral cohomology of $K/T$ has no torsion, 
we can carry out the computation.      
An additional aim of this  paper is to apply our results to recovering  the Chow rings of the complex 
algebraic groups $\mathrm{SO}(n), \mathrm{Spin}(n), \mathrm{G}_{2}$  and $\mathrm{F}_{4}$,  which  
were originally computed by  R. Marlin  \cite{Mar1}.
 (In this paper,  we denote the compact Lie groups, e.g., by $SO(n), Spin(n), G_2, F_4$, while their  
complexifications by $\mathrm{SO}(n), \mathrm{Spin}(n), \mathrm{G}_{2}, \mathrm{F}_{4}$ 
respectively.)   
In order to determine  the Chow rings of the corresponding flag manifolds, Marlin relied on the result
of Demazure  \cite{Dem1} which describes them as the ``{\it cohomology rings of the root system},"  and 
he made elaborate  computations.   In this paper, we simplify  Marlin's  computations, using  the Borel 
presentation of $H^*(K/T;\Z)$ and our  result  mentioned above.      
          
The paper is organized as follows.  In Section 2, we briefly review the cohomology of flag 
manifolds,  emphasizing the difference between the Borel presentation and the Schubert 
presentation.  In Section 3, we introduce the divided difference operators of  
Bernstein-Gelfand-Gelfand  and Demazure and collect the results used later. Sections 4 to 6 
are  devoted to  computations,   and we obtain there the  main results of this paper
(see  Propositions  \ref{prop:1st_main1}, \ref{prop:1st_main2}, \ref{prop:1st_main3} and 
\ref{prop:1st_main4}). 
In Section 7, following  Grothendieck's remark (\cite{Gro1}, p.21,  R{\scriptsize EMARQUES} $2^{\circ}$), 
we compute the Chow rings of $\mathrm{SO}(n), \mathrm{Spin}(n), \mathrm{G}_{2}$ and $\mathrm{F}_{4}$ 
(see  Theorems \ref{thm:2nd.main1}, \ref{thm:2nd.main2}, \ref{thm:2nd.main3} and \ref{thm:2nd.main4}).  
    
We observe that the method  of this paper can also be applied to the exceptional Lie groups 
$E_{6}, E_{7}$ and $E_{8}$. Indeed, we succeeded in computing  the Chow rings of the complex 
algebraic groups ${\rm E_6}$ and ${\rm E_7}$ in  \cite{Kaji-Nak1}.

{\it  Acknowledgments.}  
Firstly, we  thank  J\'{u}lius Korba\v{s} for reading carefully  the manuscript and 
giving us  useful comments and William  Fulton  for kindly pointing  out an error concerning the 
Schubert classes in the earlier version of the manuscript.    
Secondly, we  thank  Mamoru Mimura  for  giving us various suggestions. 
Finally, we thank  Shizuo Kaji for the  program using Maple,  which confirmed our computations.

\section{The cohomology  of flag manifolds}
In this section we briefly review the Borel presentation and the Schubert presentation of the 
cohomology of flag manifolds.   

We introduce the notation that is needed in the sequel.  
\begin{enumerate} 
  \item []  $K$:  a compact simply connected simple Lie group of rank $l$;   
  \item []  $T$:  a maximal  torus of $K$;  
  \item []  $G = K^{\C}$: a complexification of $K$;  
  \item []  $B$: a Borel subgroup containing $T$; 
  \item []  $\T$: the Lie algebra of $T$, $\T^{*}$:  the dual space of $\T$;
  \item []  $(\,\cdot \, | \, \cdot \, )$: the  invariant inner product  
            on $\T$ (or on $\T^*$);
  \item []  $\Delta \subset \T^{*}$: the root system with respect to $T$;   
  \item []  $\Delta^{+}$: a set of positive roots, $\Delta^{-} = - \Delta^{+}$; 
  \item []  $\Pi = \{ \alpha_{1}, \ldots, \alpha_{l} \}$: the system of  simple roots;
  \item []  $\alpha^{\vee} = \dfrac{2\alpha}{(\alpha|\alpha)}$: the coroot corresponding to $\alpha \in \Delta$;
  \item []  $\omega_{i} \; (1 \leq i \leq l)$: the $i$th fundamental weight, satisfying 
            $(\omega_{i}|\alpha_{j}^{\vee}) = \delta_{ij}$;   
  \item []  $s_{i} = s_{\alpha_{i}} \; (1 \leq i \leq l)$: the  reflection corresponding to 
            the simple root $\alpha_{i}$;
  \item []  $W = W(K)$: the Weyl group of $K$ generated by the simple reflections 
            $S = \{ s_{1}, \ldots, s_{l} \}$;  
  \item []  $l(w)$: the length of an element $w \in W$ with respect to $\{ s_{1}, \ldots, s_{l} \}$; 
  \item []  $w_{0}$: the longest element of $W$; 
  \item []  $e_{i}(x_{1}, \ldots, x_{n})$: the $i$th elementary symmetric function in variables 
            $x_{1}, \ldots, x_{n}$.  
\end{enumerate} 

Now  we review  the Borel presentation. The inclusion $T \hookrightarrow K$ induces the  classical fibration 
  \begin{equation*} 
           K/T \overset{\iota}{\longrightarrow} BT \overset{\rho}{\longrightarrow} BK,    
   \end{equation*} 
where $BT$ (resp. $BK$) denotes the classifying space of $T$ (resp. $K$).
The induced homomorphism 
 \begin{equation}  \label{eqn:ch.hom} 
     c = \iota^*: H^{*}(BT;\Z) \longrightarrow H^*(K/T;\Z)      
  \end{equation} 
is called the {\it characteristic homomorphism} and  plays a crucial role in Borel's work.  
The Weyl group $W$ acts naturally on $T$,  hence  on $H^2(BT;\Z)$.  
We extend this natural action of $W$ to the whole  $H^*(BT;\Z)$ and also to  
$H^*(BT;\F) = H^*(BT;\Z) \otimes_{\Z} \F$, where $\F$ is any field. We denote by $H^*(BT;\Z)^{W}$
(resp. $H^*(BT;\F)^{W}$)  the ring of  $W$-invariants in $H^*(BT;\Z)$ (resp. $H^*(BT;\F)$).     
Then one of the main results of  Borel can be  stated as follows. 
     \begin{thm}[Borel  \cite{Bor1}] 
       Let $\F$ be a field of characteristic zero. Then the characteristic homomorphism
       induces an  isomorphism 
            \begin{equation*} 
                 \bar{c} : H^{*}(BT;\F)/(H^{+}(BT;\F)^{W})  \longrightarrow   H^{*}(K/T;\F),  
             \end{equation*} 
         where $(H^{+}(BT;\F)^{W})$ is the ideal in $H^*(BT;\F)$ generated by the   
          $W$-invariants of positive degrees. 
      \end{thm}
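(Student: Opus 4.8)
The plan is to run the Serre spectral sequence of the fibration $K/T\xrightarrow{\iota}BT\xrightarrow{\rho}BK$ with $\Q$-coefficients and show it collapses. Since every object in play is preserved by the flat base change $-\otimes_{\Q}\F$ — the $W$-invariants of $H^{*}(BT;\Z)$, the ideal they generate, and the cohomology groups in question — it suffices to treat $\F=\Q$. The algebraic input is Chevalley's invariant theory: $H^{*}(BT;\Q)=\Q[t_{1},\dots,t_{l}]$ with $\deg t_{i}=2$; the invariant subring $H^{*}(BT;\Q)^{W}$ is again a polynomial ring on homogeneous generators of degrees $2d_{1},\dots,2d_{l}$ with $d_{1}\cdots d_{l}=|W|$; and, crucially, $H^{*}(BT;\Q)$ is a \emph{free} module over $H^{*}(BT;\Q)^{W}$. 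Hence the coinvariant algebra $C:=H^{*}(BT;\Q)/(H^{+}(BT;\Q)^{W})$ is a finite graded $\Q$-algebra, concentrated in even degrees, with $\dim_{\Q}C=|W|$ and Poincaré series $P_{C}(t)=\prod_{i=1}^{l}(1-t^{2d_{i}})/(1-t^{2})$.

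First I would produce $\bar c$. The composite $\rho\circ\iota\colon K/T\to BK$ is null-homotopic — a fibre inclusion followed by the base projection — so $c\circ\rho^{*}=\iota^{*}\circ\rho^{*}=0$ in positive degrees. By Borel's computation of $H^{*}(BK;\Q)$, the map $\rho^{*}$ identifies $H^{*}(BK;\Q)$ with $H^{*}(BT;\Q)^{W}$ inside $H^{*}(BT;\Q)$: injectivity comes from the transfer (as $\chi(K/T)=|W|$ is invertible in $\Q$), the image lies in the invariants because the $W$-action on $BT$ covers $\mathrm{id}_{BK}$, and equality is the classical relation between the primitive degrees of $H^{*}(K;\Q)$ and the degrees $d_{i}$. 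Consequently $(H^{+}(BT;\Q)^{W})$ is the ideal generated by $\rho^{*}(H^{+}(BK;\Q))$, which lies in $\Ker c$; so $c$ descends to a ring homomorphism $\bar c\colon C\to H^{*}(K/T;\Q)$.

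It remains to show $\bar c$ is bijective. As $\dim_{\Q}C=|W|=\dim_{\Q}H^{*}(K/T;\Q)$ — the latter read off from the Bruhat decomposition, whose cells are even-dimensional and indexed by $W$ — it suffices to show $c=\iota^{*}$ is surjective, equivalently that the above spectral sequence, with $E_{2}^{p,q}=H^{p}(BK;\Q)\otimes H^{q}(K/T;\Q)$, collapses at $E_{2}$. Here I would argue by comparing Poincaré series in each total degree: the spectral sequence collapses precisely when $P_{BT}(t)=P_{BK}(t)\,P_{K/T}(t)$; freeness of $H^{*}(BT;\Q)$ over $\rho^{*}H^{*}(BK;\Q)$ gives $P_{BT}(t)=P_{BK}(t)\,P_{C}(t)$; and the remaining identity $P_{K/T}(t)=P_{C}(t)$ is exactly $\sum_{w\in W}t^{2l(w)}=\prod_{i=1}^{l}(1-t^{2d_{i}})/(1-t^{2})$, i.e.\ the classical factorization of the Poincaré polynomial of $W$. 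Once the spectral sequence collapses, $\iota^{*}$ is onto, and $\bar c$ is a degree-preserving surjection between graded vector spaces of equal finite dimension in each degree, hence an isomorphism.

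The one real obstacle is the collapse of the spectral sequence: this is the point where Chevalley's freeness theorem must be matched with the additive structure of $H^{*}(K/T)$ (the Bruhat cell count and the factorization of the $W$-Poincaré polynomial), everything else being formal. If one prefers not to invoke the Bruhat decomposition, Borel's original averaging/transfer argument already supplies the bound $\dim_{\Q}H^{*}(K/T;\Q)\le|W|$; and alternatively, since $BT$ and $BK$ are formal with polynomial rational cohomology, the homotopy fibre $K/T$ has rational cohomology $H^{*}(BT;\Q)\otimes^{\mathbf{L}}_{H^{*}(BK;\Q)}\Q$, which by the same flatness collapses to $C$, and unwinding the identification recovers $\bar c$.
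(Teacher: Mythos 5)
The paper offers no proof of this statement: it is quoted from Borel \cite{Bor1} and used as a black box, so there is no internal argument to measure yours against. On its own terms, your proof is the standard Serre spectral sequence argument and is essentially correct: the reduction to $\F=\Q$, the use of Chevalley's freeness theorem to identify the coinvariant algebra $C$, the collapse of the spectral sequence of $K/T\to BT\to BK$, and the final dimension count against the Bruhat cell decomposition fit together as you describe. Two points deserve comment. First, the collapse is available more cheaply than by your Poincar\'e series comparison: $H^{*}(BK;\Q)$ is a polynomial algebra on even-degree generators (Hopf's theorem plus Borel transgression) and $H^{*}(K/T;\Q)$ is concentrated in even degrees (Bruhat cells), so for every $r\geq 2$ the differential $d_{r}$ has source or target zero; this spares you the factorization $\sum_{w\in W}t^{l(w)}=\prod_{i}(1-t^{d_{i}})/(1-t)$ at that stage, though you still need it (or just the cell count $\dim_{\Q}H^{*}(K/T;\Q)=|W|$) for the closing dimension comparison. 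Second, the load-bearing input is the identification $\rho^{*}\colon H^{*}(BK;\Q)\overset{\sim}{\to}H^{*}(BT;\Q)^{W}$, which you already need to see that $\bar c$ is well defined, i.e.\ that every positive-degree invariant lies in $\Ker\iota^{*}$. Your justification of its surjectivity --- the coincidence of the primitive degrees of $H^{*}(K;\Q)$ with the Chevalley degrees $d_{i}$ --- is classically \emph{deduced from} the two Borel isomorphisms rather than available beforehand, so as written there is a whiff of circularity. It can be repaired internally: once the spectral sequence collapses, $H^{*}(BT;\Q)$ is a free module over $\rho^{*}H^{*}(BK;\Q)$ of rank $\dim_{\Q}H^{*}(K/T;\Q)=|W|$, while Chevalley's theorem makes it free of rank $|W|$ over the a priori larger polynomial subring $H^{*}(BT;\Q)^{W}$; multiplicativity of ranks in the tower $\rho^{*}H^{*}(BK;\Q)\subseteq H^{*}(BT;\Q)^{W}\subseteq H^{*}(BT;\Q)$, together with normality of the smaller polynomial ring, then forces $\rho^{*}H^{*}(BK;\Q)=H^{*}(BT;\Q)^{W}$. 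With that rearrangement the argument is complete and self-contained modulo Chevalley's theorem, Hopf's theorem, and the Bruhat decomposition.
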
 
In particular,  one can reduce  the computation of  the rational cohomology ring $H^*(K/T;\Q)$ 
to that  of the ring of invariants $H^{*}(BT;\Q)^{W}$.  Observe that  $H^*(K/T;\Q)$  is generated by 
$H^2(K/T;\Q)$ as a ring.  
In order to determine the integral cohomology ring $H^*(K/T;\Z)$, we need further 
considerations.  In \cite{Toda1}, Toda established a method to describe the integral cohomology 
ring $H^*(K/T;\Z)$ by a minimal  system of generators and relations, from the mod $p$ cohomology rings 
$H^*(K;\Z/p\Z)$ and the rational cohomology ring $H^*(K/T;\Q)$.  In general, besides the two-dimensional 
generators,  there are extra generators of higher degrees,  and hence  the characteristic 
homomorphism $c$ is not surjective over $\Z$ in that case.   Along the lines of Toda's method, 
the integral cohomology rings 
of flag manifolds for all compact simply connected simple Lie groups have been computed (see  \cite{Bor1}, 
\cite{Bott-Sam1}, \cite{Toda-Wat1}, \cite{Nak1}, \cite{Nak2}). However, as mentioned in the 
introduction,  the generators have less geometric meaning in the Borel  presentation.

We pass to  reviewing   the  Schubert presentation. Recall   the  Bruhat decomposition, 
     \begin{equation*} 
            G = \coprod_{w \in W} B\dot{w}B,  
     \end{equation*} 
where $\dot{w}$ denotes any representative of $w$ in $W = N_{K}(T)/T$, $N_{K}(T)$ is the normalizer of 
$T$ in $K$.  It  induces a cell decomposition,  
   \begin{equation*} 
       G/B = \coprod_{w \in W} B\dot{w}B/B, 
  \end{equation*} 
where 
  \begin{equation*} 
        X_{w}^{\circ}  = B\dot{w}B/B \cong \C^{l(w)} 
  \end{equation*} 
is called the {\it Schubert cell}.  Note   that we have a homeomorphism $K/T \cong G/B$ by the Iwasawa 
decomposition.  The  {\it Schubert variety} $X_{w}$ is defined to be the closure of $X_{w}^{\circ}$. 
Then it is known that 
 \begin{equation*}
       X_{w} = \coprod_{v \leq w} X_{v}^{\circ},   
 \end{equation*} 
where $\leq$ is the Bruhat-Chevalley  ordering.  The fundamental class $[ X_{w} ]$ of $X_{w}$  lies in 
$H_{2l(w)}(G/B;\Z)$.  We define the cohomology class $Z_{w}  \in H^{2l(w)}(G/B;\Z)$ as the Poincar\'{e}
dual of $[X_{w_{0} w}] \in H_{2N - 2l(w)}(G/B;\Z)$, where $N$ is the complex dimension of the flag manifold
$G/B$. We call $Z_{w}$  the {\it Schubert class}. The Schubert classes 
$\{ Z_{w} \}_{w \in W}$ form an additive basis for $H^{*}(G/B;\Z)$.  We refer to $\{ Z_{w} \}_{w \in W}$ 
as the {\it Schubert basis}.   In order to complete the description of $H^*(G/B;\Z)$, we have to  compute 
the intersection multiplicities. Namely, given $u, v \in W$, we  can put 
\begin{equation*} 
       Z_{u} \cdot Z_{v} = \hspace{-0.8cm} 
                             \sum_{\tiny{ \begin{array}{ccc}
                                           & w \in W, \\ 
                                           &  l(u) + l(v) = l(w)
                                        \end{array} 
                                       }
                                  } \hspace{-0.8cm}  a_{u,v}^{w} Z_{w}  
\end{equation*} 
for some integers $a_{u,v}^{w}$, and we have to determine these ``structure
constants" $a_{u,v}^{w}$. As for this problem,  several results are available. 
For example, we have the following

\begin{thm}[Chevalley formula \cite{Che1}]  \label{thm:Chevalley} 
  If $w \in W, \; \alpha \in \Pi$,  then we have the  formula  
      \begin{equation*} 
              Z_{s_{\alpha}} \cdot Z_{w} = \hspace{-1cm} 
                                                   \sum_{\tiny{ \begin{array}{ccc} 
                                                                    & \beta \in \Delta^{+},  \\
                                                                    & l(w s_{\beta}) = l(w) + 1
                                                                 \end{array}
                                                                }
                                                         }  
                                       \hspace{-1cm}  (\beta^{\vee}|\omega_{\alpha}) Z_{w s_{\beta}}.   
      \end{equation*} 
\end{thm}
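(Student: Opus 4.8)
The plan is to move the identity into the Borel presentation and then carry it out with the divided difference operators introduced in Section 3. The first point is that the two-dimensional Schubert classes are the characteristic images of the fundamental weights: the operators $A_j$ descend through $c$ (they preserve the ideal $(H^{+}(BT;\Q)^{W})$), so $A_j c(\omega_i) = c((\omega_i\,|\,\alpha_j^\vee)) = \delta_{ij}Z_e = A_j Z_{s_i}$; since $A_1,\dots,A_l$ separate the elements of $H^2(G/B;\Z) = \bigoplus_k \Z\,Z_{s_k}$, this forces $c(\omega_i) = Z_{s_i}$, hence $Z_{s_\alpha} = c(\omega_\alpha)$, and the left-hand side of the asserted formula becomes the product $c(\omega_\alpha)\cdot Z_w$ of a two-dimensional class with a Schubert class. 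Moreover $G/B$ has a cell decomposition with cells only in even dimensions, so $H^*(G/B;\Z)$ is torsion-free and embeds into $H^*(G/B;\Q)$; as the structure constants $a^w_{u,v}$ are defined integrally, it suffices to prove the formula rationally, i.e.\ inside $H^*(BT;\Q)/(H^{+}(BT;\Q)^{W})$ by Borel's theorem.

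In that quotient, let $\mathfrak{S}_w \in H^*(BT;\Q)$ be polynomial representatives of the Schubert classes ($c(\mathfrak{S}_w) = Z_w$), normalised in the manner of Bernstein--Gelfand--Gelfand so that $\mathfrak{S}_{s_i} = \omega_i$ and $A_v(\mathfrak{S}_u) = \delta_{uv}$ whenever $l(u) = l(v)$. For a homogeneous $F$ of degree $d$ one then has $c(F) = \sum_{l(v)=d} (A_v F)\,Z_v$, the coefficients $A_v F$ being rational numbers since $A_v$ lowers degree by $l(v)$ and kills $(H^{+}(BT;\Q)^{W})$. Taking $F = \omega_\alpha\cdot\mathfrak{S}_w$ and reindexing via the bijection $\beta\mapsto ws_\beta$ between $\{\beta\in\Delta^{+}\mid l(ws_\beta)=l(w)+1\}$ and the elements of $W$ covering $w$, the theorem is reduced to the algebraic assertion
\[
  A_v(\omega_\alpha\cdot\mathfrak{S}_w)=
  \begin{cases}
    (\beta^\vee\,|\,\omega_\alpha), & v = ws_\beta,\ \beta\in\Delta^{+},\\
    0, & v\ \text{does not cover}\ w,
  \end{cases}
\]
for all $v$ with $l(v)=l(w)+1$.

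I would prove this by induction on $l(w)$, strengthening the hypothesis so that $\omega_\alpha$ is replaced by an arbitrary weight $\lambda$ — this is forced by the recursion below, which produces the weight $\alpha_{i_k}$. The case $w=e$ is immediate from $A_{s_i}\lambda=(\lambda\,|\,\alpha_i^\vee)$. For the inductive step, fix a reduced decomposition $v=s_{i_1}\cdots s_{i_k}$ and peel off the last factor using the Leibniz-type rule $A_i(fg)=(A_if)g+(s_if)(A_ig)$ with $A_{i_k}\lambda=(\lambda\,|\,\alpha_{i_k}^\vee)$ and $s_{i_k}\lambda=\lambda-(\lambda\,|\,\alpha_{i_k}^\vee)\alpha_{i_k}$. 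If $l(ws_{i_k})=l(w)+1$ then $A_{i_k}\mathfrak{S}_w=0$ and the computation reduces at once to a simple reflection $\beta=\alpha_{i_k}$. If $l(ws_{i_k})=l(w)-1$ then $A_{i_k}\mathfrak{S}_w=\mathfrak{S}_{ws_{i_k}}$, and after applying $A_{i_1}\cdots A_{i_{k-1}}$ one is left with two terms $A_{vs_{i_k}}(\mu\cdot\mathfrak{S}_{ws_{i_k}})$, for $\mu=\lambda$ and $\mu=\alpha_{i_k}$, which the inductive hypothesis evaluates (legitimately, since $l(ws_{i_k})<l(w)$) in terms of coroots $(s_{i_k}\beta)^\vee$. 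These combine by the elementary identity $(s_i\lambda\,|\,\beta^\vee)+(\lambda\,|\,\alpha_i^\vee)(\alpha_i\,|\,\beta^\vee)=(\lambda\,|\,\beta^\vee)$ to give exactly $(\lambda\,|\,\beta^\vee)$ when $v=ws_\beta$ and $0$ otherwise; specialising $\lambda=\omega_\alpha$ yields the claim.

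The main obstacle is this last piece of bookkeeping: one has to verify that after the divided differences act, precisely the reflections $s_\beta$ realising the covers $w\lessdot ws_\beta$ survive, each with the right coefficient — in effect one re-derives Monk's/Chevalley's rule, and the substance lies in the interaction of reflections with reduced words (the strong exchange condition) and in the conjugation identity $s_is_\gamma s_i=s_{s_i\gamma}$ used to pass between the reduced word of $v$ and that of $vs_{i_k}$. A more geometric alternative, closer to Chevalley's original argument, would bypass the polynomial algebra: identify $Z_{s_\alpha}$ with $c_1$ of the line bundle $L_{\omega_\alpha}$ on $G/B$, compute $Z_{s_\alpha}\cdot Z_w$ by intersecting a divisor in the class $Z_{s_\alpha}$ with the Schubert variety $X_{w_0w}$ dual to $Z_w$, and reduce to evaluating $\deg(L_{\omega_\alpha}|_{\mathbb{P}^1_\beta})=(\beta^\vee\,|\,\omega_\alpha)$ on the $T$-stable curves $\mathbb{P}^1_\beta$ in the boundary of $X_{w_0w}$; there the delicate point is the transversality and multiplicity analysis of the intersection. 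Since the engine of the paper is the divided difference calculus, I would present the first route.
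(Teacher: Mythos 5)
The paper does not prove this statement: Theorem \ref{thm:Chevalley} is imported verbatim from Chevalley \cite{Che1} and is used as a black box, so there is no internal proof to compare yours against. Judged on its own, your outline is the standard Bernstein--Gelfand--Gelfand algebraic proof of the Chevalley/Monk rule, and its skeleton is sound: torsion-freeness of $H^*(G/B;\Z)$ legitimately reduces the identity to $\Q$-coefficients and hence to the coinvariant algebra; the identity $c(\omega_\alpha)=Z_{s_\alpha}$ need not be rederived, since it is already the paper's Theorem \ref{thm:char.hom}; and the polynomial representatives you need are supplied by the Giambelli formula (Theorem \ref{thm:Giambelli}), whose classes $\mathfrak{S}_w=\Delta_{w^{-1}w_0}(d/|W|)$ do satisfy $\Delta_v(\mathfrak{S}_u)=\delta_{uv}$ for $l(u)=l(v)$ and $\Delta_i\mathfrak{S}_w\in\{0,\mathfrak{S}_{ws_i}\}$.

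The one place where your write-up is genuinely thin is exactly the place you flag, and it is not mere bookkeeping --- it is the theorem. Three verifications are needed to close the induction. First, in your Case A ($l(ws_{i_k})=l(w)+1$) the computation yields $(\lambda|\alpha_{i_k}^\vee)\,\delta_{vs_{i_k},w}$, so you must show that if $v=ws_\beta$ covers $w$ and the last letter of the chosen reduced word of $v$ lengthens $w$, then necessarily $\beta=\alpha_{i_k}$; this is the strong exchange condition applied to that reduced word, and without it the formula would appear to give $0$ for other covers. Second, in Case B you must check that the first Leibniz term $(\lambda|\alpha_{i_k}^\vee)\Delta_{vs_{i_k}}(\mathfrak{S}_w)$ vanishes (it does, since $vs_{i_k}=w$ is incompatible with $l(v)=l(w)+1$ there). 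Third, you must verify that the covers $v s_{i_k}\gtrdot ws_{i_k}$ produced by the inductive hypothesis correspond bijectively, via $\gamma\mapsto s_{i_k}\gamma$, to the covers $v\gtrdot w$ (in particular that $s_{i_k}\gamma$ stays positive, the excluded case $\gamma=\alpha_{i_k}$ being ruled out by length), and that $(s_{i_k}\lambda|\gamma^\vee)=(\lambda|(s_{i_k}\gamma)^\vee)$ delivers the coefficient $(\lambda|\beta^\vee)$. These are precisely the content of BGG's commutation lemma for $\Delta_w(\chi f)$; with them supplied your argument is complete, and it is a perfectly reasonable way to make the paper self-contained.
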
 

For  recent developments in the Schubert calculus, in particular,   on  multiplying Schubert classes, 
see also \cite{Duan1}, \cite{Pra1}.

\section{Schubert calculus on flag manifolds}
As reviewed  in the previous section,  there are two different ways of describing  the 
integral cohomology ring of  $K/T$, and therefore we have two bases for $H^{*}(K/T;\Z)$
correspondingly.  One is the ``algebraic basis" derived from the Borel presentation and the other is 
the ``geometric basis" $\{ Z_{w} \}_{w \in W}$ consisting  of the Schubert classes.   
It is interesting  to know how these two bases are related.   More precisely, we wish  to 
express the ring generators obtained in the Borel presentation in terms of 
the Schubert basis.  Our main tool will be  the  divided difference operators  introduced 
independently by  Bernstein-Gelfand-Gelfand \cite{BGG1} and Demazure \cite{Dem1}. We now recall 
their definition.   For $\alpha \in \Delta$,  we define an  endomorphism of $H^*(BT;\Z)$ by 

  \begin{equation*} 
        \Delta_{\alpha} (u) = \dfrac{u - s_{\alpha}(u)}{\alpha}, \quad  u \in H^*(BT;\Z). 
  \end{equation*}

\begin{definition} 
  For $w \in W$,  we define the operator 
   \[   \Delta_{w} = \Delta_{\alpha_{i_{1}}} \circ \cdots \circ  \Delta_{\alpha_{i_{k}}}   \]
  on $H^*(BT;\Z)$ lowering the degree by $2 l(w)$, 
  where $w = s_{i_{1}} \cdots s_{i_{k}}$ is a reduced decomposition of $w$. 
\end{definition} 
One can show that the operator $\Delta_{w}$ is well defined, i.e., it is independent of the 
choice of the reduced decomposition of $w$. 

  Note that the divided difference operators $\Delta_{\alpha}, \alpha \in \Delta$,  are  characterized 
  by the following two properties:  
\begin{equation}   \label{eqn:BGG1}
    \Delta_{\alpha} (\omega_{\beta}) = \delta_{\alpha \beta},
\end{equation} 
\begin{equation} \label{eqn:BGG2}
           \Delta_{\alpha}(uv) = \Delta_{\alpha}(u) v + s_{\alpha}(u) \Delta_{\alpha}(v)  
 \end{equation}  
      for $u, v \in H^*(BT;\Z)$. 

The characteristic homomorphism 
   \[  c:  H^*(BT;\Z) \longrightarrow H^*(K/T;\Z)  \] 
can be   described by the divided difference operators.    Since $\{ Z_{w} \}_{w \in W}$
 is a $\Z$-basis  for $H^*(K/T;\Z)$,   we can put 
   \begin{equation*} 
         c(f) = \sum_{l(w) = k} a_{w} Z_{w}, \quad a_{w} \in \Z  
   \end{equation*} 
for a polynomial $f \in H^{2k}(BT;\Z)$. So we have to determine the coefficients 
$a_{w}$. This problem was solved independently by  Bernstein-Gelfand-Gelfand
\cite{BGG1} and Demazure \cite{Dem1}.

\begin{thm}[Bernstein-Gelfand-Gelfand \cite{BGG1}, Demazure \cite{Dem1}] \label{thm:char.hom}
 For a polynomial $f \in H^{2k}(BT;\Z)$, we have 
   \begin{equation*} 
         c(f) =  \sum_{l(w) = k} \Delta_{w}(f) Z_{w}. 
    \end{equation*} 
  In particular, for $\alpha \in \Pi$, we have 
     \begin{equation*}  
          c(\omega_{\alpha}) =  Z_{s_{\alpha}}. 
    \end{equation*} 
\end{thm}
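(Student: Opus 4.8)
\emph{Proof proposal.} Writing $c(f)=\sum_{l(w)=k}a_w(f)Z_w$ with $a_w(f)\in\Z$ for $f\in H^{2k}(BT;\Z)$, the whole point is to identify $a_w(f)=\Delta_w(f)$. The first step I would take is to read off $a_w(f)$ as a Kronecker pairing. Since the $Z_w$ form an additive basis and $Z_w$ is by definition the Poincar\'e dual of $[X_{w_0w}]$, the classical self-duality of the Schubert basis (cf.\ \cite{Che1}) gives $\langle Z_u,[X_w]\rangle=\delta_{uw}$ whenever $l(u)=l(w)$, where $\langle\,,\,\rangle$ is the pairing between $H^*(G/B;\Z)$ and $H_*(G/B;\Z)$; hence $a_w(f)=\langle c(f),[X_w]\rangle$, and it suffices to prove
\begin{equation*}
 \langle c(f),[X_w]\rangle=\Delta_w(f)\qquad\text{whenever }2l(w)=\deg f .
\end{equation*}
I would argue by induction on $l(w)$, the case $l(w)=0$ being trivial ($c$ is unital, $X_e$ is a point, $\Delta_e=\mathrm{id}$).

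For the inductive step, fix $w$ with $l(w)=k\ge1$, pick a simple root $\alpha$ with $l(ws_\alpha)=k-1$, and put $w'=ws_\alpha$. A reduced word of $w$ ending in $s_\alpha$ gives $\Delta_w=\Delta_{w'}\circ\Delta_\alpha$, so $\Delta_w(f)=\Delta_{w'}(\Delta_\alpha f)$; as $\Delta_\alpha f$ is homogeneous of degree $2l(w')$ and, by \eqref{eqn:BGG1}--\eqref{eqn:BGG2}, is $s_\alpha$-invariant, the induction hypothesis reduces everything to the single identity
\begin{equation*}
 \langle c(f),[X_w]\rangle=\langle c(\Delta_\alpha f),[X_{w'}]\rangle .
\end{equation*}
To prove this I would exploit the $\mathbb{P}^1$-fibration $\pi=\pi_\alpha\colon G/B\to G/P_\alpha$ of the minimal parabolic $P_\alpha\supset B$. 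Because $l(w's_\alpha)>l(w')$, one has $X_w=X_{w's_\alpha}=\pi^{-1}(\pi(X_{w'}))$, and $\pi$ restricts to a birational map $X_{w'}\to Y:=\pi(X_{w'})$ and to a $\mathbb{P}^1$-bundle $X_w\to Y$; combined with the adjunction $\langle\pi^*a,b\rangle=\langle a,\pi_*b\rangle$, the equality $\pi_*[X_{w'}]=[Y]$, and the compatibility of fibre integration $\pi_*$ with restriction to $Y$, this yields
\begin{equation*}
 \langle\pi^*\pi_*\xi,[X_{w'}]\rangle=\langle\xi,[X_w]\rangle\qquad\text{for all }\xi\in H^*(G/B;\Z).
\end{equation*}
Separately, $G/B=\mathbb{P}(E_\alpha)$ is the projectivization of a rank-two bundle $E_\alpha$ over $G/P_\alpha$ on which $\zeta:=c(\omega_\alpha)$ restricts to a generator of $H^2$ on each fibre $\cong\mathbb{P}^1$, so $\pi_*\zeta=1$ and $\pi_*1=0$; and $c$ carries $H^*(BT;\Z)^{s_\alpha}$ into $\pi^*H^*(G/P_\alpha;\Z)$ (Borel's theorem together with torsion-freeness of $H^*(K/T;\Z)$). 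Writing $f=a_f+\omega_\alpha\,\Delta_\alpha f$ with $a_f:=f-\omega_\alpha\Delta_\alpha f\in H^*(BT;\Z)^{s_\alpha}$ (an identity valid over $\Z$, using \eqref{eqn:BGG1}--\eqref{eqn:BGG2} and $s_\alpha\omega_\alpha=\omega_\alpha-\alpha$), the projection formula gives $\pi^*\pi_*\,c(f)=\pi^*\pi_*\bigl(c(a_f)+\zeta\,c(\Delta_\alpha f)\bigr)=c(\Delta_\alpha f)$. Substituting $\xi=c(f)$ into the last two displays proves the required identity, and the induction closes. The ``in particular'' assertion is then the $l(w)=1$ case: $c(\omega_\alpha)=\sum_\beta\Delta_{s_\beta}(\omega_\alpha)Z_{s_\beta}=\sum_\beta\delta_{\alpha\beta}Z_{s_\beta}=Z_{s_\alpha}$ by \eqref{eqn:BGG1}.

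The step I expect to be the main obstacle is the geometric identification of $\Delta_\alpha$ with fibre integration along $\pi_\alpha$: the relation $\pi_*\,c(f)$ ``$=$'' $\Delta_\alpha f$ (through the projective bundle formula and $\pi_*c(\omega_\alpha)=1$), together with the Bruhat-geometry inputs that $X_w=\pi_\alpha^{-1}(\pi_\alpha(X_{w'}))$ and that $\pi_\alpha\colon X_{w'}\to\pi_\alpha(X_{w'})$ is birational. The rest is bookkeeping: the reduction to a pairing, the inductive unwinding of $\Delta_w$, and the projection-formula computation. If one wishes to sidestep the orientation and saturatedness issues, the algebraic shadow $\pi^*\pi_*\circ c=c\circ\Delta_\alpha$ can instead be checked after $\otimes\,\Q$ on the degree-two generators $\omega_\beta$ (Borel) and then lifted to $\Z$ by torsion-freeness, but some amount of the $\mathbb{P}^1$-bundle geometry cannot be avoided.
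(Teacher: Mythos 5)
The paper does not prove Theorem \ref{thm:char.hom}: it is quoted verbatim from Bernstein--Gelfand--Gelfand and Demazure, so there is no in-paper argument to compare against. Measured against the original sources, your proof is essentially a correct reconstruction of the BGG geometric argument: reduce to the pairing $a_w(f)=\langle c(f),[X_w]\rangle$ via the duality $\langle Z_u,[X_w]\rangle=\delta_{uw}$, induct on $l(w)$ through $\Delta_w=\Delta_{w'}\circ\Delta_\alpha$, and realize $\Delta_\alpha$ geometrically as $\pi_\alpha^*(\pi_\alpha)_*$ on the $\mathbb{P}^1$-fibration $G/B\to G/P_\alpha$. The supporting details all check out: $\Delta_\alpha f$ is $s_\alpha$-invariant, the decomposition $f=a_f+\omega_\alpha\Delta_\alpha f$ is integral with $a_f$ invariant, $c$ sends $s_\alpha$-invariants into $\pi^*H^*(G/P_\alpha;\Z)$ (rationally by Borel, integrally because $\pi^*H^*(G/P_\alpha;\Z)$ is a direct summand by Leray--Hirsch and $H^*(G/B;\Z)$ is torsion-free), and the projection formula then gives $\pi^*\pi_*\circ c=c\circ\Delta_\alpha$. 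You correctly identify the one genuinely nontrivial input, namely $\langle\pi^*\pi_*\xi,[X_{w'}]\rangle=\langle\xi,[X_{w's_\alpha}]\rangle$, which rests on the Bruhat geometry ($X_{w's_\alpha}=\pi^{-1}(\pi(X_{w'}))$, $\pi|_{X_{w'}}$ birational onto its image) plus base change for fibre integration over the possibly singular $Y$; this is exactly the point BGG handle via their correspondence/Gysin formalism (or, in Demazure's treatment, via Bott--Samelson desingularizations), and it is the step you would need to write out carefully rather than assert. With that caveat the argument is sound, and the ``in particular'' clause follows from \eqref{eqn:BGG1} as you say.
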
 

In addition to this, using the divided difference operators, we can express an arbitrary Schubert class $Z_{w}$ 
as a polynomial  in the variables $Z_{s_{i}}$.  This is the Giambelli formula which we now recall 
(for details, see \cite{Hil1}, Section 3).  Consider the element 
\begin{equation*} 
      d = \prod_{\alpha \in \Delta^{+}} \alpha.   
\end{equation*} 
Then we have 
\begin{thm}[Giambelli formula]  \label{thm:Giambelli}  
   The Schubert class $Z_{w}$ corresponding to $w \in W$ is given by    
    \[ Z_{w} = c \left ( \Delta_{w^{-1}w_{0}} \left ( \dfrac{d}{|W|} \right )\right ).  \]        
\end{thm}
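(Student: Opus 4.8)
The plan is to verify the identity $Z_w = c\bigl(\Delta_{w^{-1}w_0}(d/|W|)\bigr)$ by applying the description of the characteristic homomorphism from Theorem~\ref{thm:char.hom} to the polynomial $f := \Delta_{w^{-1}w_0}(d/|W|)$, and then pairing against the Schubert basis. First I would record that $d = \prod_{\alpha\in\Delta^+}\alpha$ lies in $H^{2N}(BT;\Z)$ with $N = \#\Delta^+ = l(w_0) = \dim_\C G/B$, so that $f$ sits in degree $2(N - l(w^{-1}w_0)) = 2l(w)$ (using $l(w^{-1}w_0) = N - l(w)$). Hence by Theorem~\ref{thm:char.hom} we may write
\begin{equation*}
  c(f) = \sum_{l(v) = l(w)} \Delta_v(f)\, Z_v = \sum_{l(v)=l(w)} \Delta_v\bigl(\Delta_{w^{-1}w_0}(d/|W|)\bigr) Z_v,
\end{equation*}
so everything reduces to the scalar identity $\Delta_v\bigl(\Delta_{w^{-1}w_0}(d)\bigr) = |W|\,\delta_{v,w}$ for $v,w$ of equal length.

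The key input is the composition rule for divided difference operators: $\Delta_u\circ\Delta_{u'} = \Delta_{uu'}$ whenever $l(uu') = l(u) + l(u')$, and $\Delta_u\circ\Delta_{u'} = 0$ otherwise (this follows from well-definedness together with $\Delta_\alpha^2 = 0$). Applying this with $u = v$, $u' = w^{-1}w_0$: since $l(v) = l(w) = N - l(w^{-1}w_0)$, the product $v\cdot(w^{-1}w_0)$ has length $\le N = l(w_0)$, with equality exactly when no cancellation occurs, i.e.\ exactly when $v(w^{-1}w_0) = w_0$, which happens iff $v = w$. Thus $\Delta_v\circ\Delta_{w^{-1}w_0} = 0$ unless $v = w$, in which case it equals $\Delta_{w_0}$. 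So the sum collapses to $c(f) = \Delta_{w_0}(d/|W|)\, Z_w$, and it remains to prove the single normalization identity
\begin{equation*}
  \Delta_{w_0}(d) = |W|.
\end{equation*}

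The hardest part is this normalization. I would prove it by downward induction on length using property~\eqref{eqn:BGG1}, but the cleanest route is the classical fact (essentially the BGG/Demazure computation, see also Hiller~\cite{Hil1}) that $d/|W|$ equals, up to the $W$-action, the product of fundamental weights $\prod_{i=1}^l \omega_i$ weighted by the structure of $\Delta^+$; more directly, one shows $\Delta_{w_0}$ applied to the top-degree generator $d$ of $H^{2N}(BT;\Q)^{W}$-coset returns $|W|$ by evaluating both sides after one more application of the augmentation, or by noting $\Delta_{w_0}(d) = \sum_{w\in W}\det(w)\,\frac{w(d)}{d}\cdot(\text{sign})$—concretely, $\Delta_{w_0}(p) = \frac{1}{d}\sum_{w\in W}(-1)^{l(w)} w(p)$ for any $p$, and plugging $p = d$ gives $\frac{1}{d}\sum_w (-1)^{l(w)}(-1)^{l(w)}d = |W|$ since $w(d) = (-1)^{l(w)}d$. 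I would present the identity $\Delta_{w_0}(p) = \frac{1}{d}\sum_{w\in W}(-1)^{l(w)}w(p)$ as the main lemma (proved by induction on $l(w_0)$ via \eqref{eqn:BGG2}), from which both the collapse of the sum and the normalization $\Delta_{w_0}(d)=|W|$ follow immediately, completing the proof.
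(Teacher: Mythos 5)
The paper does not prove Theorem~\ref{thm:Giambelli} at all --- it simply refers the reader to \cite{Hil1}, Section 3 --- so there is no in-paper argument to compare against; judged on its own, your proof is correct and is essentially the standard one from that reference. The reduction via Theorem~\ref{thm:char.hom} to the scalar identity $\Delta_{v}\bigl(\Delta_{w^{-1}w_{0}}(d/|W|)\bigr)=\delta_{v,w}$, the collapse of the sum using the product rule for divided difference operators together with the uniqueness of the longest element, and the normalization $\Delta_{w_{0}}(d)=|W|$ deduced from $\Delta_{w_{0}}=d^{-1}\sum_{w}(-1)^{l(w)}w$ and $w(d)=(-1)^{l(w)}d$ are all sound (your sign conventions check out in ranks one and two). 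Three points deserve more care. First, $d/|W|$ and hence $f=\Delta_{w^{-1}w_{0}}(d/|W|)$ are a priori only rational polynomials, while Theorem~\ref{thm:char.hom} is stated for $f\in H^{2k}(BT;\Z)$; you should say explicitly that $c$ and the coefficient formula are extended $\Q$-linearly, after which the computed coefficients $\delta_{v,w}$ show that the resulting class is the integral class $Z_{w}$. Second, the vanishing $\Delta_{u}\circ\Delta_{u'}=0$ when $l(uu')<l(u)+l(u')$ does not follow merely from well-definedness plus $\Delta_{\alpha}^{2}=0$: one needs that every non-reduced word can be transformed by braid moves into a word with an adjacent repeated letter (Tits' word property), which is the content of the nilpotence lemma in \cite{BGG1}; this is standard but should be cited rather than waved at. Third, the aside identifying $d/|W|$ with a product of fundamental weights is not correct as stated, but it is also never used --- the antisymmetrizer identity you isolate as the main lemma is the right tool and carries the whole argument.
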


In Sections 4, 5 and 6, we exploit the above theorems  to find  the correspondence 
between ``algebraic bases" and ``geometric bases" in the cases of $K = SO(n), G_{2}$ and $F_{4}$.

 \section{The cases of $B_{n}$ and $D_{n}$}
 In this section, we consider the special orthogonal group $SO(n)$.  First we deal with 
  the odd special orthogonal group  $SO(2n + 1)$. 
  Let $T^n$ be the  standard maximal torus of $SO(2n+1)$, 
  \[   T^n = \left \{ \left ( 
             \begin{array}{rrrrrrrrr}
               & \cos 2\pi t_{1}  & -\sin 2\pi t_{1} &   &   &  &    \\
               & \sin 2\pi t_{1}  & \cos 2\pi t_{1} &     \\ 
               &                  &                 & \ddots \\
               &                  &                 &     &   \cos 2\pi t_{n} & -\sin 2\pi t_{n} \\
               &                  &                 &     &   \sin 2\pi t_{n} & \cos 2\pi t_{n} \\
               &                  &                 &     &                   &                 & 1  
                   \end{array}    \right )   \right \}.     \]
Then we have an isomorphism:
   \[  H^*(BT^n;\Z) = \Z[t_{1}, t_{2}, \ldots, t_{n} ].  \] 
The system of  simple roots is given by 
   \[ \Pi = \{ \alpha_{1} = t_{1} - t_{2}, \alpha_{2} = t_{2} - t_{3}, \ldots, 
               \alpha_{n-1} = t_{n-1} - t_{n}, \alpha_{n} = t_{n}  \}.   \]
The corresponding fundamental weights $\{ \omega_{i} \}_{1 \leq i \leq n}$ are  given by 
  \begin{equation} \label{eqn:weights.SO(2n+1)}
     \begin{array}{llll} 
         \omega_{i} & = t_{1}  + t_{2} + \cdots + t_{i} \quad  (1 \leq i \leq n-1), \medskip \\
         \omega_{n} & = \dfrac{1}{2}(t_{1}  + \cdots + t_{n}).  \medskip 
     \end{array} 
  \end{equation} 
  Let $s_{i} \, (1 \leq i \leq n)$ be the reflection corresponding to the simple root 
  $\alpha_{i} \, (1 \leq i \leq n)$. Then the Weyl group $W(SO(2n+1))$ is  finite and  
  is generated by $s_{i} \, (1 \leq i \leq n)$ which act on $\{ t_{i} \}_{1 \leq i \leq n}$ 
  as permutations and  signs changes:
        \begin{equation} 
           \begin{array}{llll} 
                &  W(SO(2n+1)) & = \langle s_{1}, s_{2}, \ldots, s_{n} \rangle  \\
                &              & \cong \mathcal{S}_{n} \ltimes {(\Z/2\Z)^n}, 
           \end{array} 
         \end{equation} 
where $\mathcal{S}_{n}$ is the symmetric group of $n$ letters and $\ltimes$ means  the semi-direct 
product. 

Now we recall  the Borel presentation of $H^*(SO(2n+1)/T^n;\Z)$ that had been  probably  known, in some form, 
already  to  Borel.  However, in an  explicit form, it  first  appeared in \cite{Toda-Wat1} 
as far as the author knows.  
  \begin{thm}[Toda-Watanabe \cite{Toda-Wat1}, Theorem 2.1]  \label{thm:SO(2n+1)/T^n}
    The integral cohomology ring of $SO(2n+1)/T^n$ is 
      \begin{align*} 
          H^*(SO(2n+1)/T^n;\Z) & = \Z[t_{1}, t_{2}, \ldots, t_{n}, \gamma_{1}, \gamma_{2}, 
                                 \ldots, \gamma_{n}]  \\
                             & \left /\left (  \begin{array}{llll} 
                                             & c_{i} - 2\gamma_{i} \; (1 \leq i \leq n), \\
                                             & \gamma_{2k} + \displaystyle{\sum_{i=1}^{2k-1} 
                                               (-1)^{i} \gamma_{i} \gamma_{2k-i} }
                                               \; (1 \leq k \leq n)  
                                           \end{array}  \right ),    \right.                                          
      \end{align*}      
where we denote by the same symbols $t_{i} \in H^2(SO(2n+1)/T^n;\Z)$ the images of 
$t_{i} \in H^2(BT^n;\Z)$ under the homomorphism  $c$,  $c_{i} = e_{i}(t_{1}, \ldots, t_{n}) \; 
 (1 \leq i \leq n)$, and   $\gamma_{i} = 0$ for  $i > n$. 
  \end{thm}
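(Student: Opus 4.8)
The plan is to identify $SO(2n+1)/T^n$ with a full flag bundle over the spinor variety $SO(2n+1)/U(n)$ and to read off the presentation from the projective bundle theorem; the only substantial ingredient is then the integral cohomology ring of that spinor variety. Throughout I will use three standard facts: $H^*(SO(2n+1)/T^n;\Z)=H^*(Spin(2n+1)/\widetilde{T};\Z)$ is free abelian, concentrated in even degrees, with Poincar\'e polynomial $\prod_{i=1}^{n}\tfrac{1-q^{4i}}{1-q^{2}}$ (the degrees of $W(B_{n})$ are $2,4,\dots,2n$); $H^*(SO(2n+1)/T^n;\Q)$ is generated by $H^{2}$ (Borel); and the characteristic homomorphism kills every $W$-invariant of positive degree, so that $c\bigl(e_{k}(t_{1}^{2},\dots,t_{n}^{2})\bigr)=0$ even over $\Z$, being $\iota^{*}\rho^{*}$ of a Pontryagin class for the fibration $SO(2n+1)/T^n\overset{\iota}{\longrightarrow}BT^n\overset{\rho}{\longrightarrow}BSO(2n+1)$.

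The factor $2$ and the quadratic relations already have a transparent origin. The chain $T^{n}\subset U(n)\subset SO(2n)\subset SO(2n+1)$ yields a complex rank-$n$ bundle $\xi=SO(2n+1)\times_{T^{n}}\C^{n}$ on $SO(2n+1)/T^n$ with Chern roots $t_{1},\dots,t_{n}$, so $c_{i}(\xi)=c_{i}$; viewing $\mathbb{R}^{2n}\subset\mathbb{R}^{2n+1}$ as the defining representation of $SO(2n+1)$ restricted to $T^{n}$ gives $\xi_{\mathbb{R}}\oplus\underline{\mathbb{R}}\cong\underline{\mathbb{R}}^{\,2n+1}$, so $\xi_{\mathbb{R}}$ is stably trivial. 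Hence $0=w_{2i}(\xi_{\mathbb{R}})=c_{i}\bmod 2$, and by torsion-freeness there is a \emph{unique} $\gamma_{i}\in H^{2i}(SO(2n+1)/T^n;\Z)$ with $2\gamma_{i}=c_{i}$; likewise $0=p_{k}(\xi_{\mathbb{R}})=e_{k}(t_{1}^{2},\dots,t_{n}^{2})$, and substituting $c_{i}=2\gamma_{i}$ into $\bigl(\sum_{i}c_{i}q^{i}\bigr)\bigl(\sum_{i}(-1)^{i}c_{i}q^{i}\bigr)=\sum_{k}(-1)^{k}e_{k}(t^{2})q^{2k}$ and dividing by $4$ gives $\gamma_{2k}+\sum_{i=1}^{2k-1}(-1)^{i}\gamma_{i}\gamma_{2k-i}=0$. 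Thus $t_{i}\mapsto c(t_{i})$, $\gamma_{i}\mapsto\gamma_{i}$ is at least a well-defined ring map from the right-hand side $R$ to $H^{*}(SO(2n+1)/T^n;\Z)$.

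For surjectivity — hence, by the Poincar\'e-polynomial count below, for the isomorphism — I would use the fibration $\pi\colon SO(2n+1)/T^n\to X:=SO(2n+1)/U(n)$ induced by $T^{n}\subset U(n)$, whose fibre $U(n)/T^{n}$ is the full flag manifold of $\C^{n}$: concretely $SO(2n+1)/T^n$ is the full flag bundle of the tautological rank-$n$ bundle $S$ on the spinor variety $X$, and (with the conventions of Section~4) the classes $c(t_{1}),\dots,c(t_{n})$ are a complete set of tautological Chern roots. By the iterated projective bundle theorem
\[
H^{*}(SO(2n+1)/T^n;\Z)\;\cong\;H^{*}(X;\Z)[t_{1},\dots,t_{n}]\big/\bigl(e_{i}(t_{1},\dots,t_{n})-c_{i}(S)\ (1\le i\le n)\bigr),
\]
so the whole statement reduces to proving
\[
H^{*}(X;\Z)\;\cong\;\Z[\gamma_{1},\dots,\gamma_{n}]\big/\Bigl(\gamma_{2k}+\textstyle\sum_{i=1}^{2k-1}(-1)^{i}\gamma_{i}\gamma_{2k-i}\ (1\le k\le n)\Bigr),\qquad c_{i}(S)=2\gamma_{i}
\]
(the sign $(-1)^{i}$ one might expect here is immaterial, as the displayed relations are invariant under $\gamma_{i}\mapsto(-1)^{i}\gamma_{i}$). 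Substituting the second display into the first reproduces the asserted presentation, the $\gamma_{i}$'s of the two paragraphs agreeing (up to the immaterial sign) via $\pi^{*}c_{i}(S)=c_{i}$ and torsion-freeness; and both sides are then free abelian with equal Poincar\'e polynomial, since $\prod_{i=1}^{n}(1+q^{2i})\cdot\prod_{i=1}^{n}\bigl(1+q^{2}+\cdots+q^{2(n-i)}\bigr)=\prod_{i=1}^{n}\tfrac{1-q^{4i}}{1-q^{2}}$.

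The main work is thus the integral cohomology of the spinor variety $X$ (classical, but not formal), which I would obtain by Schubert calculus with the machinery of Section~3: $X$ has $2^{n}$ Schubert classes (minimal coset representatives for $W(B_{n})/W(A_{n-1})$, of degrees twice the subset sums of $\{1,\dots,n\}$, so the Poincar\'e polynomial is $\prod_{i}(1+q^{2i})$); take $\gamma_{i}$ to be the special Schubert class in $H^{2i}(X;\Z)$, use the Chevalley/Pieri rule to multiply them and the Giambelli formula (Theorem~\ref{thm:Giambelli}) to see the $\gamma_{i}$ generate; the identity $c(S)c(S^{*})=1$ on $X$ — which holds because the isotropic flag $S\subset S^{\perp}\subset\C^{2n+1}$ has $S^{\perp}/S$ a line bundle of vanishing first Chern class and $\C^{2n+1}/S^{\perp}\cong S^{*}$ — together with $c_{i}(S)=\pm 2\gamma_{i}$ supplies the relations, and a rank comparison against $\prod_{i}(1+q^{2i})$ shows there are no further ones. (One can instead bypass $X$ and follow Toda's method directly: compute $H^{*}(SO(2n+1)/T^n;\Z/2)\cong\Z/2[t_{1},\dots,t_{n}]/(e_{1},\dots,e_{n})\otimes\Lambda(a_{1},\dots,a_{n})$, $\deg a_{i}=2i$, from $SO(2n+1)/T^n\to BT^n\to BSO(2n+1)$, then reconstruct the integral ring from this, the rational ring, and torsion-freeness — but identifying the integral lattice inside $H^{*}(\cdot;\Q)$ is comparable work.)
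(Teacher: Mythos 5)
The paper offers no proof of this statement: it is imported verbatim from Toda--Watanabe \cite{Toda-Wat1}, Theorem 2.1, whose own derivation follows the method of Toda recalled in Section 2 (reconstruct the torsion-free integral ring from $H^*(SO(2n+1);\Z/p\Z)$ and the rational ring). Your route is genuinely different and is correct in outline. The part you actually prove is sound: stable triviality of $\xi_{\mathbb{R}}$ (from $\xi_{\mathbb{R}}\oplus\underline{\mathbb{R}}\cong\underline{\mathbb{R}}^{\,2n+1}$) gives $c_i\equiv 0\pmod 2$ and $e_k(t_1^2,\dots,t_n^2)=0$ exactly; torsion-freeness and evenness of the Schubert cell structure give the unique $\gamma_i$ with $2\gamma_i=c_i$; dividing $\bigl(\sum_i c_iq^i\bigr)\bigl(\sum_i(-1)^ic_iq^i\bigr)=\sum_k(-1)^ke_k(t^2)q^{2k}$ by $4$ yields precisely the stated quadratic relations; and the Leray--Hirsch reduction to $X=SO(2n+1)/U(n)$ together with the identity $\prod_{i=1}^n(1+q^{2i})\prod_{i=1}^n\frac{1-q^{2i}}{1-q^2}=\prod_{i=1}^n\frac{1-q^{4i}}{1-q^2}$ is the right bookkeeping. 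What your approach buys is the replacement of all mod-$p$ input by one piece of geometry, the isotropic flag $S\subset S^{\perp}\subset\underline{\C}^{2n+1}$. What it costs is that the entire weight of the proof now rests on your final paragraph, which is a plan rather than a proof: you still need (i) that $\Z[\gamma_1,\dots,\gamma_n]/\bigl(\gamma_{2k}+\sum_{i=1}^{2k-1}(-1)^i\gamma_i\gamma_{2k-i}\bigr)$ is free abelian with Poincar\'e polynomial $\prod_{i=1}^n(1+q^{2i})$, and (ii) that the $\gamma_i$ generate $H^*(X;\Z)$ \emph{integrally} --- rational generation is automatic from Borel, so the real content of (ii) is that every Schubert class of $X$ is an integral polynomial in the $\gamma_i$ (the Pfaffian Giambelli formula for the classes $\sigma_{\lambda}$). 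Both facts are exactly Pragacz \cite{Pra2}, Theorem 6.17, which the present paper itself invokes in the remark following Proposition \ref{prop:1st_main1}; citing it closes your argument, whereas establishing it from scratch via the Pieri and Giambelli formulas is itself a substantial computation.
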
 

We wish  to express the algebra generators $\{ t_{1}, \ldots, t_{n}, \gamma_{1}, \ldots, 
\gamma_{n} \}$ in terms of Schubert classes. For simplicity, we denote the Schubert class 
corresponding to the element   $w = s_{i_{1}}s_{i_{2}}  \cdots s_{i_{k}}$ 
by $Z_{i_{1}i_{2}\cdots i_{k}}$, although the reduced decomposition of a Weyl group element may not
be unique.    The  correspondence between elements of degree 2 is easy. 
By (\ref{eqn:weights.SO(2n+1)}), we have   
  \begin{equation}  \label{eqn:t_i.SO(2n+1)}
     \begin{array}{lll} 
        t_{1} &= \omega_{1}, \medskip \\
        t_{i} & = -\omega_{i-1} + \omega_{i} \quad  (2 \leq i \leq n-1), \medskip \\
        t_{n} & = -\omega_{n-1} + 2\omega_{n}. 
     \end{array} 
  \end{equation} 
Since $c(\omega_{i}) = Z_{i}$, it follows from (\ref{eqn:t_i.SO(2n+1)}) that 
  \begin{equation}  \label{eqn:basis.SO(2n+1)}  
    \begin{array}{lll} 
      t_{1} &= Z_{1}, \medskip \\
      t_{i} & = -Z_{i-1} + Z_{i} \quad  (2 \leq i \leq n-1), \medskip \\
      t_{n} & = -Z_{n-1} + 2Z_{n}.
     \end{array} 
  \end{equation}

Next, for $\gamma_{k} \; (1 \leq k \leq n)$,  we can put 
       \[   \gamma_{k} = \sum_{l(w) = k} a_{w} Z_{w}  \]
for some integers $a_{w}$.  We  need   to determine the coefficients  $a_{w}$.  To  this end,  
we make use of the divided difference operators recalled  in the previous section. In this case, 
the characteristic homomorphism $c$ is not surjective over $\Z$ and $\gamma_{k} \; (1 \leq k \leq n)$ 
is not contained in the image  of $c$. (Strictly speaking, we should consider the spinor group, 
because  the special orthogonal group  is not simply connected.  In that case, $\gamma_{1}$ is contained 
in the image of $c$.)  However,  $2\gamma_{k} = c_{k}$ is contained in the image of $c$. So we can apply 
Theorem \ref{thm:char.hom} to the polynomial $c_{k} = e_{k}(t_{1}, \ldots, t_{n})$. Thus   
    \[  c_{k} = \sum_{l(w) = k} \Delta_{w}(c_{k}) Z_{w}.   \]
Let us compute $\Delta_{w}(c_{k})$,  where $l(w) = k$ for fixed $k  \; (1 \leq k \leq n)$.
For convenience of computation, we introduce the notation 
       \[  c_{l}^{(m)} = e_{l}(t_{1}, t_{2}, \ldots, t_{m}) \quad (1 \leq m \leq n, \; 1 \leq l \leq m),    \]
so that $c_{k} = c_{k}^{(n)}$. 

We need the following auxiliary result. 
\begin{lem}   \label{lem:delta(c_k).odd}
  For fixed $k, \; 1 \leq k \leq n$,   we have 
  \begin{enumerate} 
   \item    $\Delta_{i}(c_{k}^{(n)}) = 0 \quad  (1 \leq i \leq n-1)$, 
   \item    $ \Delta_{n}(c_{k}^{(n)}) = 2c_{k-1}^{(n-1)}$,   
    \item   $\Delta_{i}(c_{k-j}^{(n-j)})= 0 \quad (1 \leq i \leq n - j - 1, \; 1 \leq j \leq n - 1)$,  
    \item   $\Delta_{n-j}(c_{k-j}^{(n-j)}) =  c_{k-j-1}^{(n-j-1)}  \quad (1 \leq j \leq n-1)$. 
  \end{enumerate} 
\end{lem}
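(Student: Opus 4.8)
The plan is to compute all four families of divided differences directly from the two characterizing properties \eqref{eqn:BGG1} and \eqref{eqn:BGG2} of the operators $\Delta_{\alpha_i}$, together with the explicit description of the simple roots and the $W$-action on the variables $t_1,\ldots,t_n$. The key observation is that for $1\le i\le n-1$ the simple reflection $s_i$ merely transposes $t_i$ and $t_{i+1}$, while $s_n$ sends $t_n\mapsto -t_n$ and fixes $t_1,\ldots,t_{n-1}$. Since the polynomial $c_k^{(m)}=e_k(t_1,\ldots,t_m)$ is symmetric in $t_1,\ldots,t_m$, statements (1) and (3) should follow immediately: whenever the index $i$ of the reflection satisfies $i\le m-1$, both $t_i$ and $t_{i+1}$ lie among the variables $t_1,\ldots,t_m$, so $s_i$ fixes $c_k^{(m)}$, hence $c_k^{(m)}-s_i(c_k^{(m)})=0$ and $\Delta_i(c_k^{(m)})=0$.

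For (2) and (4) I would use the splitting $e_k(t_1,\ldots,t_m)=e_k(t_1,\ldots,t_{m-1})+t_m\,e_{k-1}(t_1,\ldots,t_{m-1})$, i.e. $c_k^{(m)}=c_k^{(m-1)}+t_m c_{k-1}^{(m-1)}$, where the first summand does not involve $t_m$. First I would record the elementary special cases of \eqref{eqn:BGG2}: if $u$ is fixed by $s_\alpha$ then $\Delta_\alpha(uv)=u\,\Delta_\alpha(v)$, and in particular $\Delta_\alpha(u)=0$ and $\Delta_\alpha(uv+w)=u\,\Delta_\alpha(v)+\Delta_\alpha(w)$ when $w$ too is fixed. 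For (4), apply $\Delta_{n-j}$ to $c_{k-j}^{(n-j)}=c_{k-j}^{(n-j-1)}+t_{n-j}\,c_{k-j-1}^{(n-j-1)}$: the term $c_{k-j}^{(n-j-1)}$ involves only $t_1,\ldots,t_{n-j-1}$ and is symmetric in them, hence killed by $\Delta_{n-j}$; and $c_{k-j-1}^{(n-j-1)}$ is likewise fixed by $s_{n-j}$, so it factors out, leaving $c_{k-j-1}^{(n-j-1)}\,\Delta_{n-j}(t_{n-j})$. Since $\alpha_{n-j}=t_{n-j}-t_{n-j+1}$ and $s_{n-j}$ swaps $t_{n-j},t_{n-j+1}$, we get $\Delta_{n-j}(t_{n-j})=(t_{n-j}-t_{n-j+1})/\alpha_{n-j}=1$, giving $\Delta_{n-j}(c_{k-j}^{(n-j)})=c_{k-j-1}^{(n-j-1)}$.

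Statement (2) is the same computation but for the root $\alpha_n=t_n$, where $s_n$ negates $t_n$. Writing $c_k^{(n)}=c_k^{(n-1)}+t_n c_{k-1}^{(n-1)}$, the first term is fixed by $s_n$ and dropped, and $c_{k-1}^{(n-1)}$ is fixed by $s_n$, so $\Delta_n(c_k^{(n)})=c_{k-1}^{(n-1)}\,\Delta_n(t_n)$. Now $\Delta_n(t_n)=(t_n-s_n(t_n))/\alpha_n=(t_n-(-t_n))/t_n=2$, which produces the factor of $2$ and yields $\Delta_n(c_k^{(n)})=2c_{k-1}^{(n-1)}$; this is exactly the place where the ``spinor versus special orthogonal'' discrepancy in degree $2$ is reflected at the level of these operators.

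I do not expect a serious obstacle here: the entire lemma reduces to the two structural facts that $e_k(t_1,\ldots,t_m)$ is symmetric in $t_1,\ldots,t_m$ and admits the one-step recursion in $t_m$, combined with the Leibniz rule \eqref{eqn:BGG2} in the degenerate case where one factor is $s_\alpha$-invariant. The only point requiring a little care is bookkeeping the ranges of indices so that in each case the variables $t_i,t_{i+1}$ (or $t_n$) that the reflection moves are correctly in or out of the truncated variable set $\{t_1,\ldots,t_m\}$; once that is tracked, (1) and (3) are immediate and (2) and (4) are one-line computations of $\Delta_\alpha(t_m)$. A brief remark could also note that (3) and (4) together show $c_{k-j}^{(n-j)}=\Delta_{n-1}\Delta_{n-2}\cdots$ applied appropriately, which is how the lemma will be used to evaluate $\Delta_w(c_k)$ for $l(w)=k$ in the subsequent argument.
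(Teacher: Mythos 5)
Your proof is correct and follows essentially the same route as the paper: (1) and (3) via the $s_i$-invariance of the truncated elementary symmetric polynomials, and (2) and (4) via the Leibniz rule \eqref{eqn:BGG2} applied to the splitting off of the variable $t_m$, with $\Delta_n(t_n)=2$ and $\Delta_{n-j}(t_{n-j})=1$. The only cosmetic difference is that the paper packages the recursion $c_k^{(m)}=c_k^{(m-1)}+t_m c_{k-1}^{(m-1)}$ as the generating-function identity $\sum_k c_k^{(m)}=\prod_{i=1}^m(1+t_i)$ before applying $\Delta_n$, which is the same computation summed over $k$.
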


 \begin{proof} 
  (1) and (3) follow from the definition of $\Delta_{i}$ and the fact that 
        $c_{k}^{(n)}     = e_{k}(t_{1}, \ldots, t_{n})$ 
 (resp. $c_{k-j}^{(n-j)} = e_{k-j}(t_{1}, \ldots, t_{n-j})$) is invariant  under the action of 
  $s_{i} \; (1 \leq i \leq n-1)$ (resp. $s_{i} \; (1 \leq i \leq n-j-1)$). 

By (\ref{eqn:t_i.SO(2n+1)}), we have,  for $1 \leq i \leq n-1$,    
   \begin{equation}  \label{eqn:delta_i(t_j).SO(2n+1)} 
            \Delta_{i}(t_{j}) = \left \{ \begin{array}{rlll} 
                                          \hspace{-0.8cm}     1   & (j = i), \\
                                                             -1   & (j= i+1), \\
                                                              0   & (j \neq i, i+1)
                                          \end{array}    \right. 
      \end{equation} 
and      
    \begin{equation}  \label{eqn:delta_n(t_j).SO(2n+1)} 
             \Delta_{n}(t_{j})  = \left \{ \begin{array}{rrll} 
                                                    2   & (j = n), \\
                                                    0   &  (j \neq n). 
                                           \end{array}    \right.
        \end{equation}  
Then we compute 
  \begin{align*} 
      \sum_{k=0}^{n}\Delta_{n}(c_{k}^{(n)})
               &= \Delta_{n} \left (\prod_{i=1}^n (1 + t_{i}) \right ) \\
               &= \Delta_{n} \left (\prod_{i=1}^{n -1} (1 + t_{i}) \right )(1 + t_{n}) 
                + s_{n} \left (\prod_{i=1}^{n-1} (1 + t_{i})  \right ) \Delta_{n}(1 + t_{n}) \\
               &=  2 \prod_{i=1}^{n-1} (1 + t_{i}) \\
               &= 2\sum_{k=0}^{n-1}c_{k}^{(n-1)}
 \end{align*}
by (\ref{eqn:BGG2}) and (\ref{eqn:delta_n(t_j).SO(2n+1)}).  From this,   (2) follows.  (4) follows from 
a similar computation. 
\end{proof} 

By this lemma,  we deduce that 
  \begin{equation*} 
         \Delta_{w}(c_{k}) =  \left \{ \hspace{-0.3cm} 
                                       \begin{array}{lll} 
                                         & 2  & \text{if}  \quad w = s_{n-k+1} \cdots s_{n-1}s_{n}, \\
                                         & 0  & \text{otherwise}. 
                                       \end{array}  \right. 
  \end{equation*}
Therefore,  for $1 \leq k \leq n$, we have 
       \begin{equation*} 
             c_{k} = 2Z_{n-k+1, \ldots, n-1, n}
       \end{equation*} 
    in $H^*(SO(2n+1)/T^n; \Z)$. Since  $\gamma_{k}$  is defined by 
       \[    c_{k} = 2\gamma_{k}  \]
and $H^*(SO(2n+1)/T^n;\Z)$ is torsion free, we see that 
       \[   \gamma_{k} = Z_{n-k+1, \ldots, n-1, n}.   \]
Consequently,  we obtain the following result. 
\begin{prop}  \label{prop:1st_main1} 
  In Theorem $\ref{thm:SO(2n+1)/T^n}$, the relation between the ring  generators and the Schubert
  classes is given by
    \begin{align*}
          t_{1}      &= Z_{1}, \\
          t_{i}      & = -Z_{i-1} + Z_{i} \quad  (2 \leq i \leq n-1),  \\
          t_{n}      & = -Z_{n-1} + 2Z_{n}, \\
          \gamma_{k} &=  Z_{n-k+1, \ldots, n-1, n} \quad (1 \leq k \leq n).    
    \end{align*} 
In particular, we can take $Z_{1}, Z_{2}, \ldots, Z_{n}, Z_{n-1, n}, \ldots, Z_{12\cdots n-1, n}$
as the ring  generators of $H^*(SO(2n+1)/T^n;\Z)$. 
\end{prop}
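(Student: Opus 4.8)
The plan is to establish the formulas for $t_i$ directly from the Borel presentation, and the formulas for $\gamma_k$ via the divided difference operators applied to $c_k = 2\gamma_k$. For the degree-two generators, I would observe that the fundamental weights $\omega_i$ are expressed in terms of the $t_i$ by (\ref{eqn:weights.SO(2n+1)}); inverting these relations gives (\ref{eqn:t_i.SO(2n+1)}), and then applying the identity $c(\omega_\alpha) = Z_{s_\alpha}$ from Theorem \ref{thm:char.hom} immediately yields (\ref{eqn:basis.SO(2n+1)}), i.e. the stated formulas for $t_1, \ldots, t_n$.

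For the higher generators $\gamma_k$, the key point is that although $\gamma_k$ itself need not lie in the image of the characteristic homomorphism $c$, its double $2\gamma_k = c_k = e_k(t_1,\ldots,t_n)$ does, being a polynomial in $H^{2k}(BT^n;\Z)$. So I would apply Theorem \ref{thm:char.hom} to compute $c(c_k) = \sum_{l(w)=k} \Delta_w(c_k) Z_w$. The heart of the argument is then the computation of the coefficients $\Delta_w(c_k)$, which I would organize via Lemma \ref{lem:delta(c_k).odd}: using the fact that $c_k^{(n)}$ is $\mathcal{S}_n$-invariant (hence killed by all $\Delta_i$ with $i \leq n-1$), and that $\Delta_n$ applied to the generating function $\prod_{i=1}^n(1+t_i)$ peels off one factor with a factor of $2$ coming from $\Delta_n(t_n) = 2$, one shows inductively that the only reduced word $w$ of length $k$ with $\Delta_w(c_k) \neq 0$ is $w = s_{n-k+1}\cdots s_{n-1}s_n$, for which $\Delta_w(c_k) = 2$. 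This gives $c_k = 2 Z_{n-k+1,\ldots,n}$ in $H^*(SO(2n+1)/T^n;\Z)$, and since this cohomology is torsion-free and $c_k = 2\gamma_k$, we may cancel the $2$ to conclude $\gamma_k = Z_{n-k+1,\ldots,n}$.

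Finally, for the last sentence of the proposition, I would note that the first set of relations lets one recover each $Z_i$ ($1 \leq i \leq n$) from $t_1, \ldots, t_i$ by a triangular change of basis (e.g. $Z_1 = t_1$, $Z_2 = t_1 + t_2$, and so on, with the last step involving $t_n/2$ — but over the Schubert basis it is the $Z_i$ that are primitive), and conversely; likewise each $\gamma_k$ is exactly one Schubert class $Z_{n-k+1,\ldots,n}$. Hence replacing the generators $t_1,\ldots,t_n,\gamma_1,\ldots,\gamma_n$ by $Z_1,\ldots,Z_n,Z_{n-1,n},\ldots,Z_{12\cdots n-1,n}$ is an invertible substitution, so these Schubert classes also generate the ring.

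The main obstacle is the combinatorial bookkeeping in Lemma \ref{lem:delta(c_k).odd} and the deduction of the vanishing of $\Delta_w(c_k)$ for all reduced words $w$ other than $s_{n-k+1}\cdots s_n$: one must be careful that the conclusion is about all length-$k$ elements $w$, not just one convenient reduced word, and this is where the well-definedness of $\Delta_w$ and a careful induction on the structure of $w$ (which simple reflection it ends with) are essential. Everything else is a routine unwinding of definitions.
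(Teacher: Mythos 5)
Your proposal is correct and follows essentially the same route as the paper: inverting the weight relations and applying $c(\omega_\alpha)=Z_{s_\alpha}$ for the degree-two generators, then applying Theorem \ref{thm:char.hom} to $c_k=2\gamma_k$, evaluating $\Delta_w(c_k)$ via Lemma \ref{lem:delta(c_k).odd} with the generating-function trick for $\Delta_n\bigl(\prod_{i=1}^n(1+t_i)\bigr)$, and cancelling the factor $2$ by torsion-freeness. The only cosmetic difference is your closing aside about recovering $Z_n$ via $t_n/2$; in fact $Z_n=\gamma_1$ is already among the listed generators, so no division is needed there.
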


\begin{rem} 
The standard projection $p$ from $SO(2n+1)/T^n$ to \\$SO(2n+1)/U(n)$ induces an injection 
      \[  p^*: H^*(SO(2n+1)/U(n);\Z) \hookrightarrow H^*(SO(2n+1)/T^n;\Z).  \]
The cohomology ring $H^*(SO(2n+1)/U(n);\Z)$ of the {\it odd orthogonal Grassmannian}
$SO(2n+1)/U(n)$ has a $\Z$-basis of Schubert classes $\{ \sigma_{\lambda} \}$ indexed 
by strict partitions $\lambda$ contained in the ``staircase'' $\rho_{n} = (n, n-1, 
\ldots, 1)$. Observe that  the generators  $Z_{n-k+1, \ldots, n-1, n} \; (1 \leq k \leq n)$  in 
Proposition $\ref{prop:1st_main1}$ are the $p^*$-images of the ``special Schubert classes"
$\sigma_{(k)} \; (1 \leq k \leq n)$ that were used by P. Pragacz to describe the cohomology 
ring of $SO(2n+1)/U(n)$ $($see \cite{Pra2}, Theorem $6.17)$.
\end{rem}

 The case of   the even special orthogonal group $SO(2n)$ is almost identical to  that of $SO(2n+1)$. 
 So we only exhibit the data and  results. Let  
  \[   T^n = \left \{ \left ( 
             \begin{array}{rrrrrrr}
               & \cos 2\pi t_{1}  & -\sin 2\pi t_{1} & & &   \\
               & \sin 2\pi t_{1}  & \cos 2\pi t_{1} &     \\ 
               &                  &                 & \ddots \\
               &                  &                 &     &   \cos 2\pi t_{n} & -\sin 2\pi t_{n} \\
               &                  &                 &     &   \sin 2\pi t_{n} & \cos 2\pi t_{n} \\ 
                   \end{array} \right )     \right \}   \]
be the standard maximal torus of $SO(2n)$.  Then we have an  isomorphism: 
   \[  H^*(BT^n;\Z) = \Z[t_{1}, t_{2}, \ldots, t_{n} ].  \] 
The system of  simple roots is given by 
   \[ \Pi = \{ \alpha_{1} = t_{1} - t_{2}, \alpha_{2} = t_{2} - t_{3}, \ldots, 
               \alpha_{n-1} = t_{n-1} - t_{n}, \alpha_{n} = t_{n-1} + t_{n} \}.   \]
The corresponding fundamental weights $\{ \omega_{i} \}_{1 \leq i \leq n}$ are  given by 
  \begin{equation} \label{eqn:weights.SO(2n)}
     \begin{array}{llll} 
         \omega_{i}   & = t_{1} + t_{2} + \cdots + t_{i} \quad  (1 \leq i \leq n-2), \medskip  \\
         \omega_{n-1} & = \dfrac{1}{2}(t_{1} + \cdots + t_{n-1} - t_{n}),  \medskip \\  
         \omega_{n}   & = \dfrac{1}{2}(t_{1} + \cdots + t_{n-1} + t_{n}).  \medskip  
     \end{array} 
  \end{equation} 
  Let $s_{i} \, (1 \leq i \leq n)$ be the reflection corresponding to the simple root 
  $\alpha_{i} \, (1 \leq i \leq n)$. Then the Weyl group $W(SO(2n))$ is  finite and  
  is generated by $s_{i} \, (1 \leq i \leq n)$ which act on $\{ t_{i} \}_{1 \leq i \leq n}$ 
  as permutations and an even number of  sign changes:
        \begin{equation} 
           \begin{array}{llll} 
                &  W(SO(2n)) & = \langle s_{1}, s_{2}, \ldots, s_{n} \rangle  \\
                &              & \cong \mathcal{S}_{n} \ltimes (\Z/2\Z)^{n-1}. 
           \end{array} 
         \end{equation} 

The Borel presentation of $H^*(SO(2n)/T^n;\Z)$ is given by 
  \begin{thm}[Toda-Watanabe \cite{Toda-Wat1}, Corollary 2.2]  \label{thm:SO(2n)/T^n}
   The integral cohomology ring of $SO(2n)/T^n$ is 
      \begin{align*} 
          H^*(SO(2n)/T^n;\Z) & = \Z[t_{1}, t_{2}, \ldots, t_{n}, \gamma_{1}, \gamma_{2}, 
                                 \ldots, \gamma_{n-1}]  \\
                             & \left /\left (  \begin{array}{llll} 
                                             & c_{i} - 2\gamma_{i} \; (1 \leq i \leq n-1), \;  c_{n}\\
                                             & \gamma_{2k} + \displaystyle{\sum_{i=1}^{2k-1} 
                                               (-1)^{i} \gamma_{i} \gamma_{2k-i} }
                                               \; (1 \leq k \leq n -1 )  
                                           \end{array}  \right ),    \right.                                          
      \end{align*}      
where we denote by the same symbols $t_{i} \in H^2(SO(2n)/T^n;\Z)$ the images of 
$t_{i} \in H^2(BT^n;\Z)$ under the homomorphism $c$,   $c_{i} = e_{i}(t_{1}, \ldots, t_{n}) \; (1 \leq i \leq n)$, 
and  $\gamma_{i} = 0$ for  $i \geq  n$. 
  \end{thm}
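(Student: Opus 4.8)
The plan is to deduce the integral statement from Borel's theorem (recorded above), the torsion-freeness of $H^{*}(SO(2n)/T^{n};\Z)$, and a rank count, the gap between the rational and the integral pictures being bridged by explicit integral lifts of the half Chern classes $c_{i}/2$. First I would record two structural facts. Since $SO(2n)/T^{n}\cong G/B$ for $G=\mathrm{Spin}(2n)$ (equivalently $\mathrm{SO}(2n,\C)$), the Bruhat decomposition gives a cell structure with cells only in even real dimensions $2l(w)$, $w\in W(SO(2n))$; hence $H^{*}(SO(2n)/T^{n};\Z)$ is a free $\Z$-module, concentrated in even degrees, of total rank $|W(SO(2n))|=2^{n-1}n!$, and it injects into $H^{*}(SO(2n)/T^{n};\Q)$. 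By Borel's theorem the latter is $\Q[t_{1},\dots,t_{n}]/(H^{+}(BT^{n};\Q)^{W})$, and a short invariant-theory computation identifies $H^{*}(BT^{n};\Q)^{W(SO(2n))}$ with $\Q[e_{1}(t_{1}^{2},\dots,t_{n}^{2}),\dots,e_{n-1}(t_{1}^{2},\dots,t_{n}^{2}),\,t_{1}\cdots t_{n}]$, so the defining ideal is generated by the $e_{k}(t_{1}^{2},\dots,t_{n}^{2})$ $(1\le k\le n-1)$ and by $c_{n}=t_{1}\cdots t_{n}$.

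Next I would produce the higher generators through the standard fibration $p\colon SO(2n)/T^{n}\to SO(2n)/U(n)$ with fibre $U(n)/T^{n}$. The classes $c(t_{j})$ restrict to the standard degree-$2$ generators of $H^{*}(U(n)/T^{n};\Z)=\Z[t_{1},\dots,t_{n}]/(c_{1},\dots,c_{n})$, so restriction to the fibre is surjective and Leray--Hirsch applies: $H^{*}(SO(2n)/T^{n};\Z)$ is free over $p^{*}H^{*}(SO(2n)/U(n);\Z)$ on the ``staircase'' monomials $t_{1}^{i_{1}}\cdots t_{n}^{i_{n}}$ with $0\le i_{j}\le n-j$. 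By Pragacz's description of the even orthogonal Grassmannian (cf. the Remark above; alternatively, Toda's mod $2$ computation of $H^{*}(SO(2n);\Z/2)$), $H^{*}(SO(2n)/U(n);\Z)$ is $\Z$-free of rank $2^{n-1}$ and is generated by special Schubert classes $\sigma_{(1)},\dots,\sigma_{(n-1)}$; their pullbacks $\gamma_{k}:=p^{*}\sigma_{(k)}$ satisfy $2\gamma_{k}=c_{k}$ in $H^{*}(SO(2n)/T^{n};\Z)$ as well as the quadratic relations of the theorem, and, together with $t_{1},\dots,t_{n}$, generate the ring (by Leray--Hirsch).

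Finally I would assemble the presentation. Let $R$ be the right-hand side ring. The relations $c_{k}=2\gamma_{k}$ $(k<n)$ and the quadratic relations hold by the previous step; the relation $c_{n}=0$ holds because $t_{1}\cdots t_{n}$ is a positive-degree $W$-invariant, so its image under $c$ vanishes rationally, hence integrally by torsion-freeness. Thus there is a surjective ring map $\varphi\colon R\to H^{*}(SO(2n)/T^{n};\Z)$. Since the quadratic relations involve only the $\gamma$'s, one has $R=A[t_{1},\dots,t_{n}]/(c_{i}-a_{i}\colon 1\le i\le n)$ with $A=\Z[\gamma_{1},\dots,\gamma_{n-1}]/(\text{quadratic relations})\cong H^{*}(SO(2n)/U(n);\Z)$, $a_{i}=2\gamma_{i}$ for $i<n$, and $a_{n}=0$; this is the base change of the polynomial extension $A[c_{1},\dots,c_{n}]\subset A[t_{1},\dots,t_{n}]$ along $c_{i}\mapsto a_{i}$, hence free over $A$ of rank $n!$, so $R$ is $\Z$-free of rank $2^{n-1}n!$. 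Moreover, eliminating $\gamma_{i}=c_{i}/2$ over $\Q$ and using $\prod_{i}(1-t_{i}^{2}u^{2})=\bigl(\sum_{i}c_{i}u^{i}\bigr)\bigl(\sum_{i}(-1)^{i}c_{i}u^{i}\bigr)$ together with the convention $\gamma_{i}=0$ for $i\ge n$, one checks that the quadratic relations become exactly $e_{k}(t_{1}^{2},\dots,t_{n}^{2})=0$; hence $R\otimes\Q\cong H^{*}(SO(2n)/T^{n};\Q)$ by the first step. Therefore $\varphi$ is a surjection of finitely generated free $\Z$-modules of equal rank in each degree, and so an isomorphism.

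The hard part is the middle step: that the degree-$2$ classes $t_{j}$ together with the half Chern classes $\gamma_{i}=c_{i}/2$ already generate $H^{*}(SO(2n)/T^{n};\Z)$ --- equivalently, an exact description of the cokernel of the characteristic homomorphism over $\Z$ --- which is where genuine information about $SO(2n)$ (Pragacz's orthogonal Grassmannian, or Toda's mod $2$ machinery) must be used. Everything else is Borel's theorem together with formal algebra, the one point requiring care being to verify the final generating-function identity also in the range $2k\ge n$, where the convention $\gamma_{i}=0$ comes into play.
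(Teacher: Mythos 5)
This statement is quoted in the paper from Toda--Watanabe (Corollary 2.2 of \cite{Toda-Wat1}) with no proof given, so there is no internal argument to compare against; your proposal has to be judged as a free-standing proof, and as such it is essentially correct. Your route (Bruhat cell structure for torsion-freeness and the rank count $2^{n-1}n!$; Borel's theorem plus the standard computation of the $D_{n}$-invariants $e_{k}(t_{1}^{2},\ldots,t_{n}^{2})$ $(k\le n-1)$ and $c_{n}$; Leray--Hirsch for $SO(2n)/T^{n}\to SO(2n)/U(n)$; Pragacz's presentation of $H^{*}(SO(2n)/U(n);\Z)$ to supply the classes $\gamma_{k}$ with $2\gamma_{k}=c_{k}$ and the quadratic relations; then a surjection of free $\Z$-modules of equal rank) differs from Toda and Watanabe's original derivation, which follows Toda's general method of reconstructing $H^{*}(K/T;\Z)$ from $H^{*}(K;\Z/p)$ and $H^{*}(K/T;\Q)$ via the Serre spectral sequence of $K\to K/T\to BT$. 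What your approach buys is transparency: every relation is traced to a concrete geometric source (the Grassmannian fibration and the Whitney-type identity for the tautological bundle), and the generating-function verification that the quadratic relations rationally reduce to $e_{k}(t_{1}^{2},\ldots,t_{n}^{2})=0$ --- including the range $2k\ge n$ where the convention $\gamma_{i}=0$ and the relation $c_{n}=0$ must both be invoked --- is carried out correctly. The cost, which you acknowledge, is that the genuinely hard input (that $t_{1},\ldots,t_{n}$ together with integral halves of the $c_{k}$ generate the integral cohomology, and that the quadratic relations are a \emph{complete} set of relations among the $\gamma_{k}$) is outsourced to Pragacz's Theorem $6.17'$ on the even orthogonal Grassmannian, a result of comparable depth to the theorem being proved; one should also be slightly careful that the $c_{k}=e_{k}(t_{1},\ldots,t_{n})$ of the statement really are the pullbacks of the Chern classes of the tautological bundle used by Pragacz (they are, up to the usual sign normalization of the $t_{i}$). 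With that citation granted, the argument is complete.
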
 
By (\ref{eqn:weights.SO(2n)}), we have 
  \begin{equation} \label{eqn:t_i.SO(2n)}
    \begin{array}{lllll} 
        t_{1}   &= \omega_{1},  \medskip \\
        t_{i}   &= -\omega_{i-1} + \omega_{i} \quad  (2 \leq i \leq n-2),  \medskip \\
        t_{n-1} &= -\omega_{n-2} + \omega_{n-1} + \omega_{n},  \medskip \\
        t_{n}   &= -\omega_{n-1} + \omega_{n}. \medskip  
   \end{array} 
  \end{equation}  
Since $c(\omega_{i}) = Z_{i}$, it follows from (\ref{eqn:t_i.SO(2n)}) that 
\begin{equation} \label{eqn:basis.SO(2n)}
    \begin{array}{lllll} 
        t_{1}   &= Z_{1},  \medskip \\
        t_{i}   &= -Z_{i-1} + Z_{i} \quad  (2 \leq i \leq n-2),  \medskip \\
        t_{n-1} &= -Z_{n-2} + Z_{n-1} + Z_{n},  \medskip \\
        t_{n}   &= -Z_{n-1} + Z_{n}. \medskip  
   \end{array} 
  \end{equation}  

By (\ref{eqn:t_i.SO(2n)}), we have,  for $1 \leq i \leq n-1$,  
  \begin{equation}  \label{eqn:delta_i(t_j).SO(2n)} 
            \Delta_{i}(t_{j}) = \left \{ \begin{array}{rlll} 
                                          \hspace{-0.8cm}     1   & (j = i), \\
                                                             -1   & (j= i+1), \\
                                                              0   & (j \neq i, i+1)
                                          \end{array}    \right. 
      \end{equation} 
and      
    \begin{equation}  \label{eqn:delta_n(t_j).SO(2n)} 
             \Delta_{n}(t_{j})  = \left \{ \begin{array}{llll} 
                                                    1   & (j = n-1), \\
                                                    1   & (j = n), \\
                                                    0   &  (j \neq n-1, n). 
                                           \end{array}    \right.
        \end{equation}  
Then we obtain the following quite similarly to Lemma \ref{lem:delta(c_k).odd}.  
\begin{lem} For fixed $k, \; 1 \leq k \leq n-1$, we have 
  \begin{enumerate} 
   \item    $\Delta_{i}(c_{k}^{(n)}) = 0 \quad  (1 \leq i \leq n-1)$,  
   \item    $ \Delta_{n}(c_{k}^{(n)}) = 2c_{k-1}^{(n-2)}$,  
    \item   $\Delta_{i}(c_{k-j+1}^{(n-j)})= 0 \quad (1 \leq i \leq n - j - 1, \; 2 \leq j \leq n - 1), $
    \item   $\Delta_{n-j}(c_{k-j+1}^{(n-j)}) =  c_{k-j}^{(n-j-1)}  \quad (2 \leq j \leq n-1)$. 
  \end{enumerate} 
\end{lem}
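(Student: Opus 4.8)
The plan is to mirror the proof of Lemma~\ref{lem:delta(c_k).odd}, adjusting only the places where type $D_{n}$ differs from type $B_{n}$, namely the action of $s_{n}$ on $H^{*}(BT^{n};\Z)=\Z[t_{1},\ldots,t_{n}]$ and the consequent behaviour of $\Delta_{n}$, which now involves both $t_{n-1}$ and $t_{n}$.

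Parts (1) and (3) will follow, exactly as in the odd case, purely from symmetry. Since $\alpha_{i}=t_{i}-t_{i+1}$ for $1\leq i\leq n-1$, the reflection $s_{i}$ merely interchanges $t_{i}$ and $t_{i+1}$, fixing the other variables; hence the symmetric polynomials $c_{k}^{(n)}=e_{k}(t_{1},\ldots,t_{n})$ and $c_{k-j+1}^{(n-j)}=e_{k-j+1}(t_{1},\ldots,t_{n-j})$ are $s_{i}$-invariant for the relevant indices (for (3), $s_{i}$ with $1\leq i\leq n-j-1$ permutes two of the variables $t_{1},\ldots,t_{n-j}$), so $\Delta_{i}(c_{k}^{(n)})=(c_{k}^{(n)}-s_{i}(c_{k}^{(n)}))/\alpha_{i}=0$ and likewise $\Delta_{i}(c_{k-j+1}^{(n-j)})=0$.

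For part (2) I would use the generating-function device of the odd case, but splitting off two factors instead of one. Writing $\sum_{k\geq0}c_{k}^{(n)}=\prod_{i=1}^{n}(1+t_{i})=P\cdot Q$ with $P=\prod_{i=1}^{n-2}(1+t_{i})$ and $Q=(1+t_{n-1})(1+t_{n})$, the point is that $s_{n}$ (corresponding to $\alpha_{n}=t_{n-1}+t_{n}$, sending $t_{n-1}\mapsto-t_{n}$, $t_{n}\mapsto-t_{n-1}$) fixes $P$, so the Leibniz rule (\ref{eqn:BGG2}) gives $\Delta_{n}(PQ)=P\,\Delta_{n}(Q)$. A short computation gives $Q-s_{n}(Q)=2(t_{n-1}+t_{n})=2\alpha_{n}$, hence $\Delta_{n}(Q)=2$, so $\Delta_{n}\big(\prod_{i=1}^{n}(1+t_{i})\big)=2\prod_{i=1}^{n-2}(1+t_{i})=2\sum_{l\geq0}c_{l}^{(n-2)}$. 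Comparing homogeneous components of degree $2(k-1)$ (recall $\Delta_{n}$ lowers degree by $2$) yields $\Delta_{n}(c_{k}^{(n)})=2c_{k-1}^{(n-2)}$. Part (4) is the same computation applied to $\sum_{l\geq0}c_{l}^{(n-j)}=\prod_{i=1}^{n-j}(1+t_{i})$: here $s_{n-j}$ interchanges $t_{n-j}$ and $t_{n-j+1}$, the latter absent from the product, so $s_{n-j}$ fixes $\prod_{i=1}^{n-j-1}(1+t_{i})$, while $\Delta_{n-j}(1+t_{n-j})=\Delta_{n-j}(t_{n-j})=1$ by (\ref{eqn:delta_i(t_j).SO(2n)}); the Leibniz rule then gives $\Delta_{n-j}\big(\prod_{i=1}^{n-j}(1+t_{i})\big)=\prod_{i=1}^{n-j-1}(1+t_{i})=\sum_{l\geq0}c_{l}^{(n-j-1)}$, and reading off the degree-$2(k-j)$ part gives $\Delta_{n-j}(c_{k-j+1}^{(n-j)})=c_{k-j}^{(n-j-1)}$.

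The computation is routine and essentially forced; the only step deserving attention is the exact divisibility $Q-s_{n}(Q)=2\alpha_{n}$ in part (2). This is precisely where the factor $2$ and --- more importantly --- the jump by two variables (from $n$ to $n-2$, rather than $n$ to $n-1$ as in the $B_{n}$ case) originate, and it accounts for the index shift to $c_{k-j+1}^{(n-j)}$ in statements (3) and (4) compared with $c_{k-j}^{(n-j)}$ in Lemma~\ref{lem:delta(c_k).odd}.
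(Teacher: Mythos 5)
Your proposal is correct and is essentially the proof the paper intends: the paper only remarks that the lemma follows ``quite similarly to Lemma~\ref{lem:delta(c_k).odd}'', and you have spelled out exactly that generating-function argument with the right $D_{n}$ modifications (the $s_{i}$-invariance for (1) and (3), and the computation $\Delta_{n}\bigl((1+t_{n-1})(1+t_{n})\bigr)=2$ producing the drop from $n$ to $n-2$ variables in (2)).
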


By this lemma,  we deduce that 
  \begin{equation*} 
    \begin{array}{llll}
       &  \Delta_{w}(c_{1}) = \left \{ \hspace{-0.3cm} 
                                       \begin{array}{llll} 
                                          & 2 & \text{if} \quad w = s_{n}, \\
                                          & 0 & \text{otherwise}; 
                                       \end{array} \right.   \medskip \\
      &   \Delta_{w}(c_{k}) =  \left \{ \hspace{-0.3cm} 
                                       \begin{array}{lll} 
                                           & 2  & \text{if}  \quad w = s_{n-k} \cdots s_{n-2}s_{n}, \\
                                           & 0  & \text{otherwise}
                                       \end{array}  \right. \medskip 
     \end{array} 
  \end{equation*}
for $2 \leq k \leq n-1$. 

Therefore  we have 
       \begin{equation*} 
           \begin{array}{llll} 
             & c_{1} = 2Z_{n}, \medskip \\
             & c_{k} = 2Z_{n-k, \ldots, n-2, n}  \quad (2 \leq k \leq n-1) \medskip 
           \end{array} 
       \end{equation*} 
    in $H^*(SO(2n)/T^n; \Z)$.   Since $\gamma_{k}$ is defined by  
       \[    c_{k} = 2\gamma_{k}  \]
and  $H^*(SO(2n)/T^n;\Z)$ is torsion free, we see that  
       \begin{equation*} 
          \begin{array}{llll} 
              & \gamma_{1} = Z_{n}, \medskip \\
              & \gamma_{k} = Z_{n-k, \ldots, n-2, n} \quad (2 \leq k \leq n-1).  \medskip 
          \end{array} 
       \end{equation*} 
Consequently,  we obtain  the following result. 
\begin{prop} \label{prop:1st_main2}
  In Theorem $\ref{thm:SO(2n)/T^n}$, the relation between the ring  generators and the Schubert
  classes is given by 
    \begin{align*}
          t_{1}   &= Z_{1},  \\
          t_{i}   &= -Z_{i-1} + Z_{i} \quad  (2 \leq i \leq n-2),   \\
          t_{n-1} &= -Z_{n-2} + Z_{n-1} + Z_{n},  \\
          t_{n}   &= -Z_{n-1} + Z_{n}, \\
       \gamma_{1} &= Z_{n}, \\
       \gamma_{k} &=  Z_{n-k, \ldots, n-2, n} \quad  (2 \leq k \leq n-1).    
    \end{align*} 
In particular, we can take $Z_{1}, Z_{2}, \ldots, Z_{n}, Z_{n-2, n}, \ldots, Z_{12\cdots n-2, n}$ 
as the ring  generators of $H^*(SO(2n)/T^n;\Z)$. 
\end{prop}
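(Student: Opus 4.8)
The argument parallels the case of $SO(2n+1)$ treated above, so I only indicate the structure. The degree-two part is immediate: substituting the expressions (\ref{eqn:t_i.SO(2n)}) of the $t_{i}$ in terms of the fundamental weights into the identity $c(\omega_{i}) = Z_{i}$ from Theorem \ref{thm:char.hom} yields exactly the formulas (\ref{eqn:basis.SO(2n)}).

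For the classes $\gamma_{k}$ I would use that, although $SO(2n)$ is not simply connected so that $\gamma_{k}$ itself need not lie in the image of the characteristic homomorphism $c$, its double $2\gamma_{k} = c_{k} = e_{k}(t_{1},\ldots,t_{n})$ does. Applying Theorem \ref{thm:char.hom} to the polynomial $c_{k}$ gives $c_{k} = \sum_{l(w)=k}\Delta_{w}(c_{k})\,Z_{w}$, so everything reduces to evaluating $\Delta_{w}(c_{k})$ for $w$ of length $k$. Take any reduced word $w = s_{i_{1}}\cdots s_{i_{k}}$, so that $\Delta_{w} = \Delta_{i_{1}}\circ\cdots\circ\Delta_{i_{k}}$, and apply $\Delta_{i_{k}}$ first. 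By parts (1)--(2) of the Lemma preceding this proposition, $\Delta_{i_{k}}(c_{k}^{(n)})$ vanishes unless $i_{k} = n$, in which case it equals $2c_{k-1}^{(n-2)}$; by parts (3)--(4), applied repeatedly, the only way to keep obtaining a nonzero result is to apply next $\Delta_{n-2}$, then $\Delta_{n-3},\ldots$, down to $\Delta_{n-k}$, each step lowering both indices of the truncated elementary symmetric function by one with coefficient $1$. Hence $\Delta_{w}(c_{k}) = 2$ when $w = s_{n-k}\cdots s_{n-2}s_{n}$ (and $w = s_{n}$ when $k = 1$), and $\Delta_{w}(c_{k}) = 0$ otherwise, which is the assertion recorded just before the proposition. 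Consequently $c_{1} = 2Z_{n}$ and $c_{k} = 2Z_{n-k,\ldots,n-2,n}$ for $2 \leq k \leq n-1$ in $H^{*}(SO(2n)/T^{n};\Z)$, and since this ring is torsion free one may cancel the factor $2$ from the defining relation $c_{k} = 2\gamma_{k}$ to get $\gamma_{1} = Z_{n}$ and $\gamma_{k} = Z_{n-k,\ldots,n-2,n}$.

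For the last assertion, recall from Theorem \ref{thm:SO(2n)/T^n} that $H^{*}(SO(2n)/T^{n};\Z)$ is generated as a ring by $t_{1},\ldots,t_{n},\gamma_{1},\ldots,\gamma_{n-1}$. By (\ref{eqn:basis.SO(2n)}) each $t_{i}$ is an integral linear combination of $Z_{1},\ldots,Z_{n}$, while $\gamma_{1} = Z_{n}$ and, for $2 \leq k \leq n-1$, $\gamma_{k} = Z_{n-k,\ldots,n-2,n}$. Hence the subring generated by $Z_{1},\ldots,Z_{n}$ together with $Z_{n-2,n}, Z_{n-3,n-2,n},\ldots,Z_{12\cdots n-2,n}$ already contains every $t_{i}$ and every $\gamma_{k}$, so it is the whole ring.

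I expect the only genuinely non-formal step to be the evaluation of $\Delta_{w}(c_{k})$ — the verification that a single reduced word contributes. This is where the ``truncation'' built into the Lemma is indispensable: the first operator must be $\Delta_{n}$ (part (2)), which already discards \emph{two} of the variables at once, reflecting the action (\ref{eqn:delta_n(t_j).SO(2n)}) and marking the substantive difference from the $B_{n}$ case, where $\Delta_{n}$ discards only one; thereafter each intermediate $c_{m}^{(p)}$ with $p \leq n-2$ is annihilated by every $\Delta_{i}$ with $i \neq p$ and carried, with coefficient $1$, to $c_{m-1}^{(p-1)}$ by $\Delta_{p}$ (parts (3)--(4)), which forces the remaining operators to be $\Delta_{n-2},\Delta_{n-3},\ldots,\Delta_{n-k}$ in that order. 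Proving the Lemma itself is a direct manipulation of $\prod_{i}(1+t_{i})$ via the Leibniz rule (\ref{eqn:BGG2}), entirely analogous to the proof of Lemma \ref{lem:delta(c_k).odd}.
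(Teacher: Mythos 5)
Your proposal is correct and follows exactly the route the paper intends: the paper itself gives no separate proof for the $D_{n}$ case, stating only that it is ``almost identical'' to $SO(2n+1)$ and exhibiting the data, and your reconstruction (degree-two part via $c(\omega_{i})=Z_{i}$, then the chain $\Delta_{n}, \Delta_{n-2},\ldots,\Delta_{n-k}$ forced by the Lemma, then division by $2$ using torsion-freeness) is precisely the argument carried out for Proposition \ref{prop:1st_main1}, adapted with the correct two-variable truncation $\Delta_{n}(c_{k}^{(n)})=2c_{k-1}^{(n-2)}$. The only point left tacit (in the paper as well as in your write-up) is that $\Delta_{i}$ with $i>n-j$ also annihilates $c_{k-j+1}^{(n-j)}$, which is immediate since that polynomial involves only $t_{1},\ldots,t_{n-j}$ and is therefore $s_{i}$-invariant.
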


\begin{rem} 
The standard projection $p$ from $SO(2n)/T^n$ to $SO(2n)/U(n)$ induces an injection 
      \[  p^*: H^*(SO(2n)/U(n);\Z) \hookrightarrow H^*(SO(2n)/T^n;\Z).  \]
The cohomology ring $H^*(SO(2n)/U(n);\Z)$ of the {\it even  orthogonal Grassmannian}
$SO(2n)/U(n)$ has a $\Z$-basis of Schubert classes $\{ \sigma_{\lambda} \}$ indexed 
by strict partitions $\lambda$ contained in  $\rho_{n-1} = (n-1, 
\ldots, 1)$. Observe that  the generators  $Z_{n}, Z_{n-k, \ldots, n-2, n} \; (2 \leq k \leq n-1)$  in 
Proposition $\ref{prop:1st_main2}$ are the $p^*$-images of the special Schubert classes
$\sigma_{(k)} \; (1 \leq k \leq n-1)$ that were also used by P. Pragacz to describe the cohomology 
ring of $SO(2n)/U(n)$ $($see \cite{Pra2}, Theorem $6.17')$.
\end{rem}

  \section{The case of $G_{2}$}
    In this section, we concentrate on  the case of the exceptional Lie group $G_{2}$. 
    Let $T$ be a maximal torus  of $G_{2}$. Following  \cite{Bour1}, we take the system of
     simple roots $\Pi = \{\alpha_{1}, \alpha_{2} \}$  and the corresponding fundamental weights 
    $\{ \omega_{1}, \omega_{2} \}$. Then we can identify 
        \[   H^*(BT;\Z) = \Z [\omega_{1}, \omega_{2} ].   \]
    Let $s_{i} \, (i = 1, 2)$ be the  reflection corresponding to the  
    simple root $\alpha_{i} \, (i = 1, 2)$. Then the Weyl group $W(G_{2})$ of $G_{2}$
    is  finite and is  generated by $s_{i} \, (i = 1,2)$:  
      \begin{equation} \label{eqn:Weyl.G_2}
         \begin{array}{lllll} 
               &   W(G_{2})  = \langle s_{1}, s_{2} \rangle,  \medskip    \\
               & s_{1}^2   = s_{2}^2 = 1, \; (s_{1}s_{2})^6 = 1.   \medskip   
         \end{array} 
       \end{equation} 
 
Now  we review the Borel presentation of $H^*(G_{2}/T;\Z)$. We put 
  \begin{equation} \label{eqn:t_i.G_2}
   \begin{array}{lll}
       t_{1} &= -\omega_{1},  \medskip  \\
       t_{2} &= -\omega_{1} + \omega_{2},  \medskip \\
       t_{3} &= 2\omega_{1} - \omega_{2}, \medskip \\
       c_{i} &= e_{i}(t_{1}, t_{2}, t_{3}).  \medskip 
   \end{array}
  \end{equation} 
Then we can write 
     \[    H^{*}(BT;\Z) = \Z[t_{1},t_{2},t_{3}]/(c_{1}).   \]
The action of $W(G_{2})$ on $\{ t_{1}, t_{2}, t_{3} \}$ is given by T{\scriptsize ABLE} 1.  
\begin{table}  [h]   \label{table:action(G_2)}
  \begin{center}  
   \begin{tabular}{|c|c|c|c} 
   \noalign{\hrule height0.8pt}
   \hfil $ $ &  $s_{1}$   & $s_{2}$  \\
   \hline
   $t_{1}$  &  $-t_{2}$  &  $t_{1}$  \\      
   \hline 
   $t_{2}$  &  $-t_{1}$  &  $t_{3}$  \\  
   \hline 
   $t_{3}$  &  $-t_{3}$  &  $t_{2}$ \\     
     \noalign{\hrule height0.8pt}
   \end{tabular} 
   \end{center}
 \caption{}
\end{table} 

\begin{rem} 
  The elements $\{ t_{i} \}_{i = 1, 2, 3}$ are  derived from the natural 
  inclusion $T \subset SU(3) \subset G_{2}$. 
\end{rem} 

The integral cohomology ring of $G_{2}/T$ was first  determined by  Bott-Samelson \cite{Bott-Sam1},  
but we prefer to  use  the presentation due to  Toda-Watanabe.  
\begin{thm}[Bott-Samelson \cite{Bott-Sam1}, Toda-Watanabe \cite{Toda-Wat1}] \label{thm:G_2/T} 
   The integral cohomology ring of $G_{2}/T$ is 
       \[  H^{*}(G_{2}/T;\Z) = \Z[t_{1}, t_{2}, t_{3}, \gamma_{3} ] 
                                 /(\rho_{1}, \rho_{2}, \rho_{3}, \rho_{6}),   \]
where  $\rho_{1} = c_{1}, \; \rho_{2} = c_{2}, \; \rho_{3} = c_{3} - 2\gamma_{3}, \; 
         \rho_{6} = \gamma_{3}^2$,   
and we denote by the same symbols $t_i \in H^{2}(G_2/T;\Z)$ the images of $t_i \in H^2(BT;\Z)$ under the 
homomorphism $c$. 
\end{thm}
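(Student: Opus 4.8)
The plan is to build the presentation in three stages: a rational computation from Borel's theorem, the isolation of a single extra integral generator in degree $6$, and a Poincar\'e-series count showing that nothing more is needed. For the first stage I would compute $H^*(G_2/T;\Q)$ via Borel's theorem as $\Q[t_1,t_2,t_3]/(c_1)$ modulo the ideal generated by the positive-degree invariants of the $W(G_2)$-action of Table~1. Since $W(G_2)$ is a reflection group of order $12$ acting on a $2$-dimensional space with $6$ reflections, its invariant ring is polynomial on generators whose polynomial degrees $d_1,d_2$ satisfy $d_1d_2=12$ and $d_1+d_2=8$, i.e.\ $\{d_1,d_2\}=\{2,6\}$. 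One checks from Table~1 that $c_2=e_2(t_1,t_2,t_3)$ is $W$-invariant, while $c_3=e_3(t_1,t_2,t_3)$ is fixed by $s_2$ and negated by $s_1$, so $c_3^2$ is the degree-$6$ fundamental invariant; hence $H^*(G_2/T;\Q)\cong\Q[t_1,t_2,t_3]/(c_1,c_2,c_3^2)$, a graded ring of total dimension $12$ with Poincar\'e polynomial $(1+q^2)(1+q^2+q^4+q^6+q^8+q^{10})$.

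Next I would pass to $\Z$. Because $G_2/T$ has a cell decomposition with only even-dimensional Schubert cells, $H^*(G_2/T;\Z)$ is torsion free and injects into $H^*(G_2/T;\Q)$. The crux is that $c_3=c\bigl(e_3(t_1,t_2,t_3)\bigr)\in H^6(G_2/T;\Z)$ is divisible by $2$, and the natural way to see this is the tool of the paper: by Theorem~\ref{thm:char.hom}, $c(c_3)=\sum_{l(w)=3}\Delta_w(c_3)\,Z_w$, and the only length-$3$ elements of $W(G_2)$ are $s_1s_2s_1$ and $s_2s_1s_2$. A short computation with $\Delta_{\alpha_1},\Delta_{\alpha_2}$, using Table~1 together with the relations $\alpha_1=t_3$ and $\alpha_2=t_2-t_3$ (immediate from \eqref{eqn:t_i.G_2} and the Cartan matrix of $G_2$), gives $\Delta_{s_2s_1s_2}(c_3)=0$ and $\Delta_{s_1s_2s_1}(c_3)=\pm2$, so $c_3=\pm2\,Z_{s_1s_2s_1}$ in $H^6(G_2/T;\Z)$. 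By torsion-freeness I may therefore \emph{define} $\gamma_3:=\frac{1}{2}c_3\in H^6(G_2/T;\Z)$; since $4\gamma_3^2=c_3^2=0$ already over $\Q$, it follows that $\gamma_3^2=0$ integrally.

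Finally I would assemble the presentation. With $A=\Z[t_1,t_2,t_3,\gamma_3]/(\rho_1,\rho_2,\rho_3,\rho_6)$ as in the statement, the relations just verified provide a well-defined graded ring homomorphism $\varphi\colon A\to H^*(G_2/T;\Z)$ carrying the generators to the named classes. Eliminating $t_3$ by $\rho_1=0$, reducing $t_1$-powers by $\rho_2=0$, replacing $t_2^3$ by $2\gamma_3$ via $\rho_3$, and using $\gamma_3^2=0$ shows that $A$ is spanned over $\Z$ by the twelve monomials $t_1^at_2^b\gamma_3^c$ with $a\in\{0,1\}$, $b\in\{0,1,2\}$, $c\in\{0,1\}$; since $A\otimes\Q\cong\Q[t_1,t_2,t_3]/(c_1,c_2,c_3^2)$ has dimension $12$, the ring $A$ is free of rank $12$ with exactly the Poincar\'e polynomial found above, $\varphi\otimes\Q$ is the Borel isomorphism, and $\varphi$ is injective. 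The hard part is the remaining surjectivity, equivalently the assertion that $\gamma_3$ is the \emph{only} ring generator beyond the characteristic subring $\operatorname{Im} c$: rationally this is automatic, but integrally it is precisely the $2$-primary subtlety. I would settle it by expanding each Schubert class of degree $\le12$ as a $\Z$-polynomial in $Z_{s_1},Z_{s_2}$ and $\gamma_3$ by means of the Chevalley and Giambelli formulas (Theorems~\ref{thm:Chevalley} and \ref{thm:Giambelli}), or by invoking Toda's structure theorem for $H^*(K/T;\Z)$ \cite{Toda1}, whose input is the mod-$2$ cohomology $H^*(G_2;\Z/2\Z)\cong(\Z/2\Z)[x_3]/(x_3^4)\otimes\Lambda(x_5)$ and in which $\gamma_3$ is forced by $x_3$, with $\gamma_3^2=0$ matching $x_3^4=0$. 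Combined with the injectivity and the matching ranks already established, this yields $A\cong H^*(G_2/T;\Z)$.
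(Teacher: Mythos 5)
The paper offers no proof of this theorem: it is quoted from Bott--Samelson \cite{Bott-Sam1} and Toda--Watanabe \cite{Toda-Wat1}, so there is no internal argument to compare against. Your reconstruction is essentially the Toda--Watanabe route, and most of it is sound. The rational step is correct (degrees $2$ and $6$ for the fundamental invariants of the dihedral group of order $12$, $c_2$ invariant, $c_3$ anti-invariant under $s_1$, hence $H^*(G_2/T;\Q)\cong\Q[t_1,t_2,t_3]/(c_1,c_2,c_3^2)$ of total rank $12$); torsion-freeness from the even-cell decomposition is standard; and your divided-difference computation showing $\Delta_{s_2s_1s_2}(c_3)=0$, $\Delta_{s_1s_2s_1}(c_3)=\pm 2$, hence $c_3=\pm 2Z_{121}$, is literally the computation the paper performs right after the theorem in order to prove Proposition \ref{prop:1st_main3} ($c_3=-2Z_{121}$, $\gamma_3=-Z_{121}$). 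The injectivity argument via the $12$ monomials $t_1^at_2^b\gamma_3^c$ and the rank count is also correct.

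The one genuine gap is the step you yourself flag: surjectivity of $\varphi$, i.e.\ that $H^*(G_2/T;\Z)$ is generated by $H^2$ together with the single class $\gamma_3=\tfrac12 c_3$. Neither of your two remedies is carried out. The Schubert-calculus route is feasible (twelve classes, Chevalley products in each degree), but note that Theorem \ref{thm:Giambelli} expresses $Z_w$ as $c$ applied to the a priori \emph{rational} polynomial $\Delta_{w^{-1}w_0}(d/|W|)$, so by itself it does not certify that each $Z_w$ is an \emph{integral} polynomial in $Z_1,Z_2,\gamma_3$; you would still need to exhibit the integral expressions, as the paper does in the $F_4$ case in Lemma \ref{lem:Schubert_polynom.F_4}. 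The alternative of invoking Toda's structure theorem \cite{Toda1} with input $H^*(G_2;\Z/2\Z)$ is legitimate and is in effect how \cite{Toda-Wat1} proceeds, but then your argument is no longer self-contained: it reduces to the very citation the paper makes. So the proposal is a correct outline with the decisive generation step deferred rather than proved.
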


By (\ref{eqn:Weyl.G_2}),   the elements of the Weyl group $W(G_{2})$ are given by 
the following table. 
\vspace{0.3cm} 
\begin{center} 
\begin{tabular}[h]{|c|ll|}
   \noalign{\hrule height0.8pt}
   \hfil $l(w)$ &  Elements of $W(G_{2})$  &  \\
   \hline
         $0$    &   1   & \\
  \hline 
         $1$    &  $s_{1}$                     & $s_{2}$   \\
   \hline  
         $2$    &   $s_{1}s_{2}$               & $s_{2}s_{1}$  \\
   \hline 
         $3$    &   $s_{1}s_{2}s_{1}$          & $s_{2}s_{1}s_{2}$  \\
   \hline 
         $4$    &   $s_{1}s_{2}s_{1}s_{2}$      & $s_{2}s_{1}s_{2}s_{1}$  \\
   \hline 
         $5$    &   $s_{1}s_{2}s_{1}s_{2}s_{1}$ & $s_{2}s_{1}s_{2}s_{1}s_{2}$  \\
   \hline 
         $6$   &    $s_{1}s_{2}s_{1}s_{2}s_{1}s_{2}$  &  \\ 
   \noalign{\hrule height0.8pt}
   \end{tabular} 
\end{center} 
\vspace{0.3cm} 
Therefore the corresponding  Schubert  basis for  $H^*(G_2/T;\Z)$ is given as follows:
\vspace{0.3cm} 
\begin{center} 
   \begin{tabular}{|c|c|c|c|c|c|c|c|}
   \noalign{\hrule height0.8pt}
   \hfil deg &  $0$ & $2$ & $4$ & $6$ & $8$ & $10$ & $12$   \\
   \hline
     & $1$ &  $Z_{1}$ & $Z_{12}$    & $Z_{121}$ & $Z_{1212}$ & $Z_{12121}$ & $$  \\
   \hline  
    &      &  $Z_{2}$ & $Z_{21}$ & $Z_{212}$ & $Z_{2121}$  & $Z_{21212}$  & $Z_{121212}$    \\
   \noalign{\hrule height0.8pt}
   \end{tabular}
\end{center}  
\vspace{0.3cm} 
  Here we denote $Z_{s_{i}}$  simply  by  $Z_{i}$ and so on. We wish  to express the algebra generators 
  $\{ t_{1}, t_{2}, t_{3}, \gamma_{3} \}$ in terms of Schubert classes. 
  Since $c(\omega_{i}) = Z_{i} \; (i = 1, 2)$, it follows   from (\ref{eqn:t_i.G_2}) that  
\begin{equation} \label{eqn:basis.G_2} 
\begin{array}{llll}  
  t_{1} &=  -Z_1, \medskip \\
  t_{2} &= -Z_{1} + Z_{2}, \medskip \\
  t_{3} &= 2Z_{1} - Z_{2}. \medskip 
\end{array}
\end{equation} 

Next  we can put 
  \begin{equation*} 
      \gamma_{3} = a_{121}Z_{121} + a_{212}Z_{212}  
   \end{equation*} 
for some integers $a_{121}, a_{212}$ and we  need   to determine the coefficients $a_{121}, a_{212}$.  
The characteristic homomorphism $c$ is not surjective over $\Z$ and  $\gamma_{3}$ is not contained 
in the image of $c$, but  $2\gamma_{3} = c_{3}$  is in the image of $c$.  Thus putting  
          \[  c_{3} = b_{121} Z_{121} + b_{212} Z_{212}  \] 
for some integers $b_{121}, b_{212}$, we can compute the coefficients 
$b_{121}, b_{212}$ using the divided difference operators.    By (\ref{eqn:t_i.G_2}),  we have  
  \begin{equation*}
  \begin{array}{llll}  
     c_{3} &= t_{1}t_{2}t_{3} \medskip   \\
           &= 2\omega_{1}^3 -3\omega_{1}^2 \omega_{2} + \omega_{1}\omega_{2}^2.  
  \end{array}
  \end{equation*}   
Therefore  we derive 
  \begin{align*} 
   b_{121} &= \Delta_{1}\Delta_{2}\Delta_{1}(c_{3})  \\
           &= \Delta_{1}\Delta_{2}\Delta_{1}(2\omega_{1}^3 -3\omega_{1}^2 \omega_{2} 
              + \omega_{1}\omega_{2}^2)  \\
           &= -2, \\
   b_{212} &= \Delta_{2}\Delta_{1}\Delta_{2}(c_{3})  \\
           &= \Delta_{2}\Delta_{1}\Delta_{2}(2\omega_{1}^3 -3\omega_{1}^2 \omega_{2} 
              + \omega_{1}\omega_{2}^2)  \\
           &= 0
   \end{align*}
from  (\ref{eqn:BGG1}), (\ref{eqn:BGG2}) and T{\scriptsize ABLE} 1. 

Thus we have 
 \[  c_{3} = - 2Z_{121}   \]  
   in $H^{6}(G_{2}/T;\Z)$.  Since  $\gamma_{3}$ is defined by 
  \[   c_{3} = 2\gamma_{3}   \]
and  $H^*(G_2/T;\Z)$ is torsion free,  we see that 
        \[  \gamma_{3} = -Z_{121}.   \]    
Consequently,  we obtain the following result.  
\begin{prop} \label{prop:1st_main3} 
  In Theorem $\ref{thm:G_2/T}$, the relation between the ring  generators $\{ t_{1}, t_{2}, t_{3}, 
  \gamma_{3} \}$ and the Schubert classes  is given by 
     \begin{equation*} 
          \begin{array}{llll}  
              t_{1} &=  -Z_1, \medskip \\
              t_{2} &= -Z_{1} + Z_{2}, \medskip \\
              t_{3} &= 2Z_{1} - Z_{2}, \medskip \\
              \gamma_{3} & = -Z_{121}.  
          \end{array}
\end{equation*}  
In particular, we can take $Z_{1}, Z_{2}, Z_{121}$ as the ring  generators of $H^*(G_{2}/T;\Z)$. 
\end{prop}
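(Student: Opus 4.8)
The plan is to mirror the computation already carried out for $SO(2n+1)$ and $SO(2n)$, but now in the rank-two setting of $G_2$ where the only new generator is $\gamma_3$ in degree $6$. First I would dispose of the degree-two generators $t_1, t_2, t_3$: by (\ref{eqn:t_i.G_2}) they are explicit integral linear combinations of $\omega_1$ and $\omega_2$, and by the ``in particular'' clause of Theorem \ref{thm:char.hom} we have $c(\omega_i) = Z_{s_i} = Z_i$ for $i = 1, 2$. Applying $c$ to the relations in (\ref{eqn:t_i.G_2}) and using the images of $t_i$ in $H^2(G_2/T;\Z)$ immediately yields $t_1 = -Z_1$, $t_2 = -Z_1 + Z_2$, $t_3 = 2Z_1 - Z_2$. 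This part is purely formal.

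The substance lies in identifying $\gamma_3$. The obstacle is that $c$ is \emph{not} surjective over $\Z$ — exactly because of the extra generator $\gamma_3$ — so one cannot simply write $\gamma_3 = c(\text{something})$ and apply Theorem \ref{thm:char.hom}. The workaround, as in the orthogonal cases, is that $2\gamma_3 = c_3 = e_3(t_1, t_2, t_3)$ \emph{is} in the image of $c$: from Theorem \ref{thm:G_2/T}, $\rho_3 = c_3 - 2\gamma_3$ is a relation. Since $c_3$ has degree $6$ and $H^6(G_2/T;\Z)$ has Schubert basis $\{Z_{121}, Z_{212}\}$, I would write $c_3 = b_{121} Z_{121} + b_{212} Z_{212}$ and compute $b_{121} = \Delta_{121}(c_3)$, $b_{212} = \Delta_{212}(c_3)$ by Theorem \ref{thm:char.hom}. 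Concretely one expresses $c_3 = t_1 t_2 t_3$ in terms of $\omega_1, \omega_2$ via (\ref{eqn:t_i.G_2}), getting a cubic in $\omega_1, \omega_2$, and then pushes the three-fold divided difference operators through using the characterizing properties (\ref{eqn:BGG1}), (\ref{eqn:BGG2}) together with the $W(G_2)$-action recorded in Table 1. This is the one genuinely computational step, but it is a short finite calculation in two variables; I expect $b_{121} = -2$ and $b_{212} = 0$, so that $c_3 = -2Z_{121}$ in $H^6(G_2/T;\Z)$.

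Finally I would conclude: from $c_3 = 2\gamma_3$ and $c_3 = -2Z_{121}$ we get $2\gamma_3 = -2Z_{121}$, and since $H^*(G_2/T;\Z)$ is torsion-free (a fact I may invoke, being part of the standard structure of $H^*(K/T;\Z)$ used throughout the paper) this forces $\gamma_3 = -Z_{121}$. Assembling the four formulas gives the displayed relations. The ``in particular'' clause then follows because $\{t_1, t_2, t_3, \gamma_3\}$ generate $H^*(G_2/T;\Z)$ by Theorem \ref{thm:G_2/T}, the constraint $c_1 = \rho_1 = 0$ lets one eliminate, say, $t_3 = -(t_1 + t_2)$, and the invertible change of basis $Z_1 = -t_1$, $Z_2 = t_1 + t_2$ (over $\Z$) together with $Z_{121} = -\gamma_3$ shows that $Z_1, Z_2, Z_{121}$ generate the same ring. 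The only point requiring a word of care is making sure the elimination and base change are genuinely over $\Z$ and not just over $\Q$; this is immediate from the explicit unimodular coefficients above.
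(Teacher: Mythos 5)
Your argument is correct and follows the paper's proof essentially verbatim: the degree-two identities come from $c(\omega_i)=Z_i$ applied to (\ref{eqn:t_i.G_2}), and $\gamma_3$ is pinned down exactly as in the paper by writing $c_3=t_1t_2t_3$ as a cubic in $\omega_1,\omega_2$, computing $\Delta_{121}(c_3)=-2$ and $\Delta_{212}(c_3)=0$ via (\ref{eqn:BGG1}), (\ref{eqn:BGG2}) and T{\scriptsize ABLE} 1, and dividing $c_3=2\gamma_3=-2Z_{121}$ by $2$ using torsion-freeness. One trivial slip in your closing remark: from $t_1=-Z_1$ and $t_2=-Z_1+Z_2$ one gets $Z_2=t_2-t_1$, not $t_1+t_2$; the change of basis is still unimodular over $\Z$, so the ``in particular'' clause (which the paper simply asserts) is unaffected.
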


\section{The case of $F_{4}$} 
  In this section, we deal with  the case of the exceptional Lie group $F_{4}$. 
  Let $T$ be a maximal torus of  $F_{4}$.  Following  \cite{Bour1}, we take the system of 
   simple roots  $\Pi =  \{ \alpha_{i} \}_{1 \leq i \leq 4}$ 
  and the corresponding fundamental weights $\{ \omega_{i} \}_{1 \leq i \leq 4}$. Then we 
  can identify 
           \[  H^{*}(BT;\Z) = \Z[\omega_{1},\omega_{2},\omega_{3},\omega_{4}].  \]  
  Let $s_{i} \, (1 \leq i \leq 4)$ be the  reflection corresponding to the 
  simple root $\alpha_{i} \, (1 \leq i \leq 4)$. Then the Weyl group $W(F_{4})$ of 
  $F_{4}$ is finite and is  generated by $s_{i} \, (1 \leq i \leq 4)$:
   \begin{equation}  \label{eqn:Weyl.F_4} 
    \begin{array}{llll} 
       &  W(F_{4})= \langle s_{1}, s_{2}, s_{3}, s_{4} \rangle, \medskip   \\
       &  s_{1}^2 = s_{2}^2 = s_{3}^2 = s_{4}^2 = 1,   \medskip  \\
       &  (s_{1}s_{2})^3 = (s_{3}s_{4})^3 = (s_{2}s_{3})^4 = 1, \medskip \\
       &  s_{1}s_{3} = s_{3}s_{1}, s_{1}s_{4} = s_{4}s_{1}, s_{2}s_{4} = s_{4}s_{2}.   
    \end{array} 
    \end{equation}  

Now  we review the Borel presentation of $H^*(F_{4}/T;\Z)$.  We put 
  \begin{equation}  \label{eqn:t_i.F_4}
   \begin{array}{llll} 
       t_{1} &= -\omega_{4},  \medskip \\
       t_{2} &= \omega_{1} - \omega_{4}, \medskip \\
       t_{3} &= -\omega_{1} + \omega_{2}  - \omega_{4}, \medskip  \\
       t_{4} &= -\omega_{2} + 2\omega_{3} - \omega_{4},  \medskip \\      
       c_{i} &= e_{i}(t_{1},\ldots,t_{4}), \medskip \\ 
        t    &= \dfrac{1}{2}c_{1} = \omega_{3} - 2\omega_{4}. \medskip   
   \end{array}
 \end{equation} 
Then we can write 
     \[    H^{*}(BT;\Z) = \Z[t_{1},t_{2},t_{3},t_{4},t]/(c_{1}-2t).   \]
The action of $W(F_{4})$ on $\{ t_{i} \}_{1 \leq i \leq 4}$ is given by  T{\scriptsize ABLE} 2, 
where  blanks indicate the trivial action.
\begin{table}  [h]
  \begin{center}  
   \begin{tabular}{|c|c|c|c|c|} 
   \noalign{\hrule height0.8pt}
   \hfil $$ &  $s_{1}$ & $s_{2}$ & $s_{3}$ & $s_{4}$   \\
   \hline
   $t_{1}$ &  $$ & $$ &  $$            & $t_{1} - t$   \\      
   \hline 
   $t_{2}$ &  $t_{3}$ &  $$  & $$      & $t_{2} -t$     \\  
   \hline 
   $t_{3}$ &  $t_{2}$ &  $t_{4}$  & $$ & $t_{3} -t$\\     
   \hline 
   $t_{4}$ & $$ & $t_{3}$ & $-t_{4}$   & $t_{4} -t$  \\
   \hline 
   $t$     &  $$ & $$ & $t - t_{4}$ & $-t$  \\
     \noalign{\hrule height0.8pt}
   \end{tabular} 
   \end{center}
 \caption{}
\end{table} 

  \begin{rem} 
    The elements $\{ t_{i}  \}_{1 \leq i \leq 4}$ and $t$ are  derived from 
     the natural inclusion $T \subset Spin(9) \subset F_{4}$.  
  \end{rem} 

The integral cohomology ring of $F_{4}/T$ was  determined by Toda-Watanabe \cite{Toda-Wat1}.  
\begin{thm}[Toda-Watanabe \cite{Toda-Wat1}, Theorem A]  \label{thm:F_4/T}
  The integral cohomology ring of $F_{4}/T$ is 
  \[  H^*(F_{4}/T;\Z) = \Z[t_{1}, t_{2}, t_{3}, t_{4}, t, \gamma_{3}, \gamma_{4}]
                           /(\rho_{1}, \rho_{2}, \rho_{3}, \rho_{4}, \rho_{6}, \rho_{8}, \rho_{12}),   \]
 where 
\begin{align*}
    \rho_{1}  &=  c_{1} - 2t, \\
    \rho_{2}  &=  c_{2} - 2t^2, \\
    \rho_{3}  &=  c_{3} - 2\gamma_{3}, \\
    \rho_{4}  &=  c_{4} - 4t\gamma_{3} + 8t^4 - 3\gamma_{4}, \\
    \rho_{6}  &= \gamma_{3}^2 - 3t^2\gamma_{4} - 4t^3\gamma_{3} + 8t^6, \\
    \rho_{8}  &= 3\gamma_{4}^2 + 6t\gamma_{3}\gamma_{4} - 3t^4\gamma_{4} - 13t^8, \\
    \rho_{12} &= \gamma_{4}^3  -6t^4\gamma_{4}^2 + 12 t^8\gamma_{4} - 8t^{12},  
\end{align*}
and we denote by the same symbols $t_i \in H^2(F_{4}/T;\Z)$ the images of  $t_{i} \in H^2(BT;\Z)$ 
under the homomorphism $c$. 
\end{thm}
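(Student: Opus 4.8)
The plan is to follow Toda's scheme (cf.\ \cite{Toda1}, \cite{Toda-Wat1}): combine Borel's rational presentation with the torsion-freeness of $H^*(F_{4}/T;\Z)$ and the known mod-$p$ cohomology of $F_{4}$ at the two torsion primes $p=2$ and $p=3$, and then pin down the integral relations by a Poincar\'e-series count.

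\emph{Rational step.} By Borel's theorem, $H^*(F_{4}/T;\Q)\cong\Q[\omega_{1},\ldots,\omega_{4}]/(H^{+}(BT;\Q)^{W(F_{4})})$, and by Chevalley's theorem on invariants of reflection groups $H^*(BT;\Q)^{W(F_{4})}$ is a polynomial algebra on homogeneous elements of polynomial degrees $2,6,8,12$ (the degrees of $W(F_{4})$), so the relation ideal is a regular sequence in topological degrees $4,12,16,24$. Passing to the $Spin(9)$-coordinates $t_{1},\ldots,t_{4}$ of $(\ref{eqn:t_i.F_4})$, with $c_{i}=e_{i}(t_{1},\ldots,t_{4})$ and $t=\tfrac12 c_{1}$, the subgroup $W(B_{4})\subset W(F_{4})$ acts by signed permutations of the $t_{i}$, and I would write down explicit $W(F_{4})$-invariant representatives of the four fundamental invariants (using the extra reflection $s_{4}$ of Table~2) as polynomials in $t,c_{2},c_{3},c_{4}$. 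Substituting $\gamma_{3}$ for $\tfrac12 c_{3}$ and $\gamma_{4}$ for $\tfrac13(c_{4}-4t\gamma_{3}+8t^{4})$ — i.e.\ eliminating $t,\gamma_{3},\gamma_{4}$ over $\Q$ — one checks directly that $(\rho_{1},\ldots,\rho_{12})$ generates exactly this invariant ideal, so that the candidate ring, tensored with $\Q$, becomes $H^*(F_{4}/T;\Q)$.

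\emph{Integral step.} The Bruhat cell decomposition $F_{4}/T\cong F_{4}^{\C}/B=\coprod_{w}X_{w}^{\circ}$ has cells only in even real dimensions, so $H^*(F_{4}/T;\Z)$ is free abelian of rank $|W(F_{4})|=1152$ and embeds into $H^*(F_{4}/T;\Q)$; it then suffices to identify the image as a subring. The classes $t_{1},\ldots,t_{4},t$ are $c$-images of $H^2(BT;\Z)$ (note $t=\omega_{3}-2\omega_{4}$), hence integral. The extra generators $\gamma_{3}\in H^{6}$ and $\gamma_{4}\in H^{8}$ are produced from the fibration $T\to F_{4}\to F_{4}/T$ (equivalently $F_{4}/T\to BT\to BF_{4}$): running its Serre spectral sequence with $\Z/2$- and $\Z/3$-coefficients and using the known rings $H^*(F_{4};\Z/2)$, $H^*(F_{4};\Z/3)$ and their transgressions, one sees that the class $c_{3}$ is divisible by $2$ and the class $c_{4}-4t\gamma_{3}+8t^{4}$ divisible by $3$ in $H^*(F_{4}/T;\Z)$, which defines $\gamma_{3},\gamma_{4}$; the same analysis shows that $t_{1},\ldots,t_{4},t,\gamma_{3},\gamma_{4}$ generate $H^*(F_{4}/T;\Z)$ as a ring, and that each $\rho_{i}$ vanishes — $\rho_{1}$ already in $H^*(BT;\Z)$, $\rho_{2}$–$\rho_{4}$ because they are characteristic images of $W(F_{4})$-invariants, and $\rho_{6},\rho_{8},\rho_{12}$ by divisibility forced integrally from the rational relations.

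\emph{Closing the count.} Finally I would verify that the candidate ring $R=\Z[t_{1},t_{2},t_{3},t_{4},t,\gamma_{3},\gamma_{4}]/(\rho_{1},\ldots,\rho_{12})$ is exactly $H^*(F_{4}/T;\Z)$. Eliminating $t,\gamma_{3},\gamma_{4}$ shows that $R[\tfrac16]$ is a complete intersection whose Poincar\'e series is $\prod_{i=1}^{4}(1-q^{2d_{i}})/(1-q^{2})$ with $d_{i}\in\{2,6,8,12\}$, agreeing with $H^*(F_{4}/T;\Q)$; a separate check, using the explicit relations, shows $R$ is torsion-free with the correct Betti numbers modulo $2$ and modulo $3$. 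The ring map $R\twoheadrightarrow H^*(F_{4}/T;\Z)$ sending $t_{i},t$ to the corresponding characteristic classes and $\gamma_{3},\gamma_{4}$ to the divided classes above is then a surjection of free $\Z$-modules of equal finite rank in each degree, hence an isomorphism. I expect the genuine obstacle to be the determination of the relations $\rho_{6},\rho_{8},\rho_{12}$ with their exact coefficients — notably $\gamma_{3}^{2}=3t^{2}\gamma_{4}+4t^{3}\gamma_{3}-8t^{6}$ and the degree-$24$ relation with its $-13$ and $-8$ — which demands either a delicate analysis of the mod-$2$ and mod-$3$ Bockstein and Steenrod operations on $F_{4}/T$ or a long but elementary computation inside $H^*(F_{4}/T;\Q)$ via the invariant theory of $W(F_{4})$.
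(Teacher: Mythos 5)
The paper does not prove Theorem \ref{thm:F_4/T}: it is quoted from Toda--Watanabe \cite{Toda-Wat1}, and the remark immediately following it records that their actual argument runs through the inclusion $H^*(F_{4}/Spin(9);\Z)\hookrightarrow H^*(F_{4}/T;\Z)$ and the known ring $H^*(F_{4}/Spin(9);\Z)$, the statement above being a rewriting of that result in terms of $W(F_{4})$-invariants. Your plan is therefore a genuinely different route --- the general scheme of \cite{Toda1} applied directly to the fibration $F_{4}/T\to BT\to BF_{4}$ and the mod $2$ and mod $3$ cohomology of $F_{4}$ --- rather than the two-stage computation via the Cayley plane. Your global bookkeeping is sound: the degrees $2,6,8,12$ of $W(F_{4})$, the rank $|W(F_{4})|=1152$, the integrality of $t=\omega_{3}-2\omega_{4}$, the Poincar\'e-series count, and the divisibility statements $2\mid c_{3}$ and $3\mid(c_{4}-4t\gamma_{3}+8t^{4})$ that define $\gamma_{3},\gamma_{4}$ are all consistent with the theorem.

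As a proof, however, the proposal has a real gap, which you yourself flag: the relations $\rho_{6},\rho_{8},\rho_{12}$ with their exact integral coefficients are never derived, and these are the entire content of the theorem beyond the rational statement. ``Divisibility forced integrally from the rational relations'' does not by itself produce the coefficients $-13$ and $-8$, nor does it show that the particular integral lifts you choose generate the full relation ideal over $\Z$; and the asserted torsion-freeness of the candidate ring $R$ --- which your final rank-counting isomorphism requires --- depends precisely on those coefficients being correct modulo $2$ and modulo $3$, so it cannot be invoked before they are pinned down. To close the argument one must either carry out the spectral-sequence and Bockstein analysis in full detail, or follow the cited source and exploit the fibration $Spin(9)/T\to F_{4}/T\to F_{4}/Spin(9)$, where the base has torsion-free cohomology concentrated in degrees $0,8,16$ and the fibre is handled by the $B_{4}$ case, so that $H^*(F_{4}/T;\Z)$ becomes a free module over $H^*(F_{4}/Spin(9);\Z)$ and the relations can be read off degree by degree.
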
 

\begin{rem} 
    In \cite{Toda-Wat1}, Toda and  Watanabe described $H^*(F_{4}/T;\Z)$  using the inclusion 
    $H^*(F_{4}/Spin(9);\Z) \hookrightarrow H^*(F_{4}/T;\Z)$ and  the known structure of 
    $H^*(F_{4}/Spin(9);\Z)$. Theorem $\ref{thm:F_4/T}$ is a  rewritten form  of  their result 
    in terms  of the $W(F_{4})$-invariants. 
\end{rem} 

By (\ref{eqn:Weyl.F_4}), the elements of the Weyl group $W(F_{4})$ of  
length $\leq 4$ are given by the following table. 
\vspace{0.5cm} 

\begin{center} 
\begin{tabular}[h]{|c|lllll|}
   \noalign{\hrule height0.8pt}
   \hfil $l(w)$ & Elements  &  of  $W(F_4)$ &  & &    \\
   \hline
         $0$    &   1   & & & &         \\
  \hline 
         $1$    &  $s_{1}$   &  $s_{2}$  &  $s_{3}$ & $s_{4}$  &   \\
   \hline  
         $2$    &   $s_{1}s_{2}$ &   $s_{1}s_{3}$  & $s_{1}s_{4}$   & $s_{2}s_{1}$  
                & $s_{2}s_{3}$   \\
                &   $s_{2}s_{4}$   & $s_{3}s_{2}$   & $s_{3}s_{4}$ &   $s_{4}s_{3}$  & \\
   
    \hline  
         $3$    &   $s_{1}s_{2}s_{1}$  & $s_{1}s_{2}s_{3}$ & $s_{1}s_{2}s_{4}$ &  $s_{1}s_{3}s_{2}$ 
                &   $s_{1}s_{3}s_{4}$    \\
                &   $s_{1}s_{4}s_{3}$  & $s_{2}s_{1}s_{3}$ & $s_{2}s_{1}s_{4}$  & $s_{2}s_{3}s_{2}$
                &   $s_{2}s_{3}s_{4}$    \\
                &   $s_{2}s_{4}s_{3}$  & $s_{3}s_{2}s_{1}$  & $s_{3}s_{2}s_{3}$  & $s_{3}s_{2}s_{4}$ 
                &   $s_{3}s_{4}s_{3}$    \\ 
                &   $s_{4}s_{3}s_{2}$  & & &  & \\
   \hline 
         $4$    &   $s_{1}s_{2}s_{1}s_{3}$    & $s_{1}s_{2}s_{1}s_{4}$  
                &   $s_{1}s_{2}s_{3}s_{2}$    & $s_{1}s_{2}s_{3}s_{4}$  
                &   $s_{1}s_{2}s_{4}s_{3}$  \\
                &   $s_{1}s_{3}s_{2}s_{1}$    & $s_{1}s_{3}s_{2}s_{3}$ 
                &   $s_{1}s_{3}s_{2}s_{4}$    & $s_{1}s_{3}s_{4}s_{3}$  
                &   $s_{1}s_{4}s_{3}s_{2}$   \\
                &   $s_{2}s_{1}s_{3}s_{2}$    & $s_{2}s_{1}s_{3}s_{4}$  
                &   $s_{2}s_{1}s_{4}s_{3}$    & $s_{2}s_{3}s_{2}s_{1}$ 
                &   $s_{2}s_{3}s_{2}s_{3}$   \\
                &   $s_{2}s_{3}s_{2}s_{4}$    & $s_{2}s_{3}s_{4}s_{3}$  
                &   $s_{2}s_{4}s_{3}s_{2}$    & $s_{3}s_{2}s_{1}s_{3}$ 
                &   $s_{3}s_{2}s_{1}s_{4}$   \\
                &   $s_{3}s_{2}s_{3}s_{4}$    & $s_{3}s_{2}s_{4}s_{3}$  
                &   $s_{3}s_{4}s_{3}s_{2}$    & $s_{4}s_{3}s_{2}s_{1}$ 
                &   $s_{4}s_{3}s_{2}s_{3}$  \\
     \noalign{\hrule height0.8pt}
   \end{tabular} 
\end{center} 
\vspace{0.5cm} 

We have the corresponding Schubert  basis $\{ Z_{w} \}_{w \in W(F_{4})}$. As before,   we denote  
$Z_{s_{i}}$  simply by $Z_{i}$ and so on.  We wish  to express the algebra generators 
$\{ t_{1}, t_{2}, t_{3}, t_{4}, t, \gamma_{3}, \gamma_{4} \}$ in terms of Schubert classes.  
Since $c(\omega_{i}) = Z_{i}$, it follows from  (\ref{eqn:t_i.F_4}) that  
 
  \begin{equation} \label{eqn:basis.F_4} 
    \begin{array}{llll} 
       t_{1} &= -Z_{4},  \medskip \\
       t_{2} &= Z_{1} - Z_{4}, \medskip \\
       t_{3} &= -Z_{1} + Z_{2}  - Z_{4}, \medskip  \\
       t_{4} &= -Z_{2} + 2Z_{3} - Z_{4},  \medskip \\       
        t    &=  Z_{3} - 2Z_{4}. \medskip   
   \end{array}
 \end{equation}

Next  we can put
   \[  \gamma_{3} = \sum_{l(w) = 3} a_{w} Z_{w},  \quad    \gamma_{4} = \sum_{l(w) = 4} a_{w} Z_{w}  \] 
for some integers $a_{w}$. We wish  to determine the coefficients $a_{w}$.  
By Theorem \ref{thm:F_4/T}, we have  
   \begin{equation} \label{eqn:gamma3,4}
       \begin{array}{llll} 
              2\gamma_{3} & = c_{3}, \medskip  \\
              3\gamma_{4} & = c_{4} - 4t\gamma_{3} + 8t^4  \medskip  \\
                           & = c_{4} - 2tc_{3}      + 8t^4.  \medskip 
       \end{array} 
    \end{equation}  
Therefore $2\gamma_{3}$ and $3\gamma_{4}$ are contained in the image of $c$.
So  as in the case of $G_{2}$,  we apply the divided difference operators to 
the right  hand side of (\ref{eqn:gamma3,4}).   The result can be seen in the following table.

\vspace{0.3cm} 
\begin{center} 
   \begin{tabular}{|c|c|c|c|c|c|c|c|}
   \noalign{\hrule height0.8pt}
   \hfil  &  $c_{3}$  \\
   \hline
     $\Delta_{121}$   & $0$    \\
   \hline  
     $\Delta_{123}$   & $2$    \\
   \hline
     $\Delta_{124}$   & $0$    \\
   \hline 
     $\Delta_{132}$   & $0$    \\
    \hline 
     $\Delta_{134}$   & $0$    \\
   \hline 
     $\Delta_{143}$   & $0$    \\
    \hline 
     $\Delta_{213}$   & $0$    \\
    \hline 
     $\Delta_{214}$   & $0$    \\
   \hline 
     $\Delta_{232}$   & $0$    \\
   \hline 
     $\Delta_{234}$   & $-2$   \\
   \hline 
     $\Delta_{243}$   & $-4$   \\
   \hline 
     $\Delta_{321}$   & $0$    \\
   \hline 
     $\Delta_{323}$   & $0$    \\
   \hline 
     $\Delta_{324}$   & $0$    \\
   \hline 
     $\Delta_{343}$   & $6$  \\
   \hline 
     $\Delta_{432}$   & $0$  \\
   \noalign{\hrule height0.8pt}
   \end{tabular}
\end{center}  
Here we denote $\Delta_{1}\Delta_{2}\Delta_{1}$ simply by $\Delta_{121}$ and so on.

Thus we have 
  \begin{equation*} 
  \begin{array}{llll} 
    c_{3} & = 2Z_{123} -2Z_{234} - 4Z_{243} + 6Z_{343} \medskip \\
          & = 2(Z_{123} - Z_{234} - 2Z_{243} + 3Z_{343}) \medskip 
  \end{array} 
  \end{equation*} 
in $H^{6}(F_{4}/T;\Z)$.  Since $\gamma_{3}$ is defined by
       \[  c_{3} = 2\gamma_{3}   \] 
and $H^{*}(F_{4}/T;\Z)$ is torsion free, we see that  
      \begin{equation}  \label{eqn:gamma_3(F_4)}
                 \gamma_{3} = Z_{123} - Z_{234} - 2Z_{243} + 3Z_{343}.   
      \end{equation}

Similarly  we obtain 
\vspace{0.3cm} 
\begin{center} 
   \begin{tabular}{|c|c|c|c|c|c|c|c|}
   \noalign{\hrule height0.8pt}
   \hfil  &  $c_{4} -2tc_{3}+ 8t^4$  \\
   \hline
     $\Delta_{1213}$  & $0$    \\
   \hline  
     $\Delta_{1214}$   & $0$    \\
   \hline
     $\Delta_{1232}$   & $0$    \\
   \hline 
     $\Delta_{1234}$   & $3$    \\
    \hline 
     $\Delta_{1243}$   & $-30$    \\
   \hline 
     $\Delta_{1321}$   & $0$    \\
    \hline 
     $\Delta_{1323}$   & $12$    \\
    \hline 
     $\Delta_{1324}$   & $0$    \\
   \hline 
     $\Delta_{1343}$   & $0$    \\
   \hline 
     $\Delta_{1432}$   & $0$   \\
   \hline 
     $\Delta_{2132}$   & $0$   \\
   \hline 
     $\Delta_{2134}$   & $0$    \\
   \hline 
     $\Delta_{2143}$   & $0$    \\
   \hline 
     $\Delta_{2321}$   & $0$    \\
   \hline 
     $\Delta_{2323}$   & $0$  \\
   \hline 
     $\Delta_{2324}$   & $0$  \\
   \hline 
     $\Delta_{2343}$   & $0$ \\
   \hline 
     $\Delta_{2432}$   & $0$  \\
   \hline 
     $\Delta_{3213}$   & $0$   \\
   \hline 
     $\Delta_{3214}$   & $0$  \\
   \hline 
     $\Delta_{3234}$   & $-3$  \\
  \hline
     $\Delta_{3243}$  &  $30$  \\
   \hline 
     $\Delta_{3432}$  &  $0$   \\
    \hline 
     $\Delta_{4321}$  &  $0$   \\
   \hline 
     $\Delta_{4323}$  &  $-24$   \\
   \noalign{\hrule height0.8pt}
   \end{tabular}
\end{center}

Thus we have 
  \begin{equation*} 
   \begin{array}{lllll} 
      c_{4} - 2tc_{3} + 8t^4 &= 
              3Z_{1234} - 30Z_{1243} + 12Z_{1323} - 3Z_{3234} + 30Z_{3243} - 24Z_{4323} 
              \medskip  \\
           & = 3(Z_{1234} - 10Z_{1243} + 4Z_{1323} - Z_{3234} + 10Z_{3243} - 8Z_{4323}) 
              \medskip   
  \end{array} 
  \end{equation*} 
in $H^{8}(F_{4}/T;\Z)$.  Since $\gamma_{4}$ is defined by 
     \[  c_{4} - 2tc_{3} + 8t^4 = 3\gamma_{4}  \]
and  $H^*(F_4/T;\Z)$ is torsion free,  we see that  
    \begin{equation}  \label{eqn:gamma_4(F_4)} 
        \gamma_{4} = Z_{1234} - 10Z_{1243} + 4Z_{1323} - Z_{3234} + 10Z_{3243} - 8Z_{4323}.
    \end{equation} 

Unfortunately, from  (\ref{eqn:gamma_3(F_4)}) and  (\ref{eqn:gamma_4(F_4)}), we cannot decide  
 which Schubert classes are  indecomposable. This leads us to the use of the Giambelli formula. 
 We need the following data. 
\begin{itemize} 
  \item [(1)]A set of positive roots of $F_{4}$ is given by 
           \[   \Delta^{+} = \left \{ \begin{array}{llll} 
                                      &  t_{i} \pm  t_{j}  \; (1 \leq i < j \leq 4),  \medskip \\  
                                      &  t_{i} \; (1 \leq i \leq 4), \; 
                                         \dfrac{1}{2}(t_{1} \pm t_{2} \pm t_{3} \pm t_{4})   \medskip   
                                 \end{array}   \right  \}.   \]
  \item [(2)]The longest element $w_{0}$ of the Weyl group $W(F_{4})$ is given  by 
            \[   w_{0} =    s_{1}s_{2}s_{1}s_{3}s_{2}s_{1}s_{3}s_{2}s_{3}s_{4}s_{3}s_{2}s_{1}s_{3}s_{2}s_{3}s_{4}s_{3}s_{2}s_{1}s_{3}s_{2}s_{3}s_{4}.   \]
\end{itemize} 
   
Then, using  the Giambelli formula (Theorem \ref{thm:Giambelli}), we obtain the following 
 result. 
  \begin{lem}  \label{lem:Schubert_polynom.F_4}
      In  $(\ref{eqn:gamma_3(F_4)})$ and $(\ref{eqn:gamma_4(F_4)})$, each Schubert class is 
       expressed in terms of $t_{i} \; (1 \leq i \leq 4), t, \gamma_{3}, \gamma_{4}$ 
       as follows$:$

   \begin{align*}
         Z_{123} &= \gamma_{3} - 2t_{1}^3 + 3t_{1}^2 t - 2t_{1}t^2, \\ 
         Z_{234} &= -t_{1}^3, \\
         Z_{243} &= -2t_{1}^3 + 3t_{1}^2 t - t_{1}t^2, \\
         Z_{343} &= -t_{1}^3 + t_{1}^2t,  \\
         Z_{1234} &= -\gamma_{4} + (t_{1} - 2t)\gamma_{3} + t_{1}^3t - t_{1}^2t^2 + 3t^4, \\
         Z_{1243} &= \gamma_{4} + (-2t_{1} + 2t)\gamma_{3} + 2t_{1}^4 - 4t_{1}^3 t + 3t_{1}^2 t^2 - 3t^4, \\
         Z_{1323} &= -\gamma_{4} - t\gamma_{3} + 2t_{1}^4 - 4t_{1}^3 t + 4t_{1}^2 t^2 - 2t_{1} t^3 + 3t^4, \\
         Z_{3234} &= \gamma_{4} + (-t_{1} + 2t)\gamma_{3} + t_{1}^4 - t_{1}^3 t + t_{1}^2 t^2 - 3t^4, \\
         Z_{3243} &= t_{1}^4 - 2t_{1}^3 t + t_{1}^2 t^2, \\
         Z_{4323} &= -\gamma_{4} + (2t_{1} - 2t)\gamma_{3} - t_{1}t^3 + 3t^4.
   \end{align*} 
      In particular, $Z_{123}$ and $Z_{1234}$ are indecomposable in   the ring $H^*(F_{4}/T;\Z)$. 
  \end{lem}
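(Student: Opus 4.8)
The plan is to apply the Giambelli formula (Theorem~\ref{thm:Giambelli}) to each of the ten Weyl group elements $w$ occurring in $(\ref{eqn:gamma_3(F_4)})$ and $(\ref{eqn:gamma_4(F_4)})$: the four length-three elements $s_1s_2s_3,\ s_2s_3s_4,\ s_2s_4s_3,\ s_3s_4s_3$ and the six length-four elements $s_1s_2s_3s_4,\ s_1s_2s_4s_3,\ s_1s_3s_2s_3,\ s_3s_2s_3s_4,\ s_3s_2s_4s_3,\ s_4s_3s_2s_3$. For each $w$, I would first write down a reduced word for $u:=w^{-1}w_0$, of length $N-l(w)$ with $N=|\Delta^+|=24$, extracting it as a subword of the reduced expression for $w_0$ in (2) and confirming its correctness by means of the braid relations $(\ref{eqn:Weyl.F_4})$. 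I would then form $d=\prod_{\alpha\in\Delta^+}\alpha$ from the list of positive roots in (1) and compute $P_w:=\Delta_{u}(d)/|W(F_4)|\in H^*(BT;\Q)$, where $|W(F_4)|=1152$; note that $P_w$ is homogeneous of cohomological degree $2l(w)$ (a cubic, resp.\ a quartic, in the $\omega_i$) and need not be integral. By Theorem~\ref{thm:Giambelli}, $Z_w=c(P_w)$.

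The bulk of the argument is the divided-difference computation of $P_w$. This is entirely mechanical --- iterate $\Delta_\alpha(u)=(u-s_\alpha u)/\alpha$ with the Leibniz rule $(\ref{eqn:BGG2})$, using T{\scriptsize ABLE} 2 for the $W(F_4)$-action on the $t_i$ --- but lengthy, since each $\Delta_u$ is a composite of $20$ or $21$ operators; it is exactly the kind of computation confirmed by S.\ Kaji's Maple program. With $P_w$ in hand I would push it through the characteristic homomorphism: rewrite $P_w$ in the $t_i$ by inverting $(\ref{eqn:t_i.F_4})$, then reduce modulo the relations of Theorem~\ref{thm:F_4/T} --- replacing $c_1$ by $2t$, $c_2$ by $2t^2$, $c_3$ by $2\gamma_3$, $c_4$ by $2tc_3-8t^4+3\gamma_4$, and using $\rho_2,\rho_6,\rho_8,\rho_{12}$ --- until only $t_1,t,\gamma_3,\gamma_4$ remain, and check that the outcome is the expression asserted in the lemma. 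Since $H^*(F_4/T;\Z)$ is torsion free, it is harmless to carry out this reduction over $\Q$: an identity established in $H^*(F_4/T;\Q)$ between two integral classes is automatically an identity over $\Z$.

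I expect the main obstacle to be organizational rather than conceptual: keeping the ten long divided-difference computations straight, and then performing the reduction of the resulting degree-three and degree-four polynomials modulo the large ideal $(\rho_1,\dots,\rho_{12})$ so that exactly $t_1,t,\gamma_3,\gamma_4$ survive (in fact the formulas turn out to involve only $t_1$ and $t$ among the degree-two classes). As a cross-check one can substitute the ten resulting formulas back into $(\ref{eqn:gamma_3(F_4)})$ and $(\ref{eqn:gamma_4(F_4)})$ and verify that $\gamma_3$ and $\gamma_4$ are recovered identically.

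Finally, for the indecomposability statement: the presentation in Theorem~\ref{thm:F_4/T} is a minimal one, so $\gamma_3$ and $\gamma_4$ do not lie in the square of the augmentation ideal of $H^*(F_4/T;\Z)$. In the formulas just obtained, $Z_{123}=\gamma_3+(-2t_1^3+3t_1^2t-2t_1t^2)$ and $Z_{1234}=-\gamma_4+\bigl((t_1-2t)\gamma_3+t_1^3t-t_1^2t^2+3t^4\bigr)$, and in each case the parenthesized term is a sum of products of positive-degree classes, hence decomposable. Therefore $Z_{123}\equiv\gamma_3$ and $Z_{1234}\equiv-\gamma_4$ modulo decomposables, and since $\gamma_3,\gamma_4$ are indecomposable, so are $Z_{123}$ and $Z_{1234}$.
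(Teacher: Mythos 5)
Your proposal is correct and follows essentially the same route as the paper: the paper's own justification is precisely to feed the data (1) and (2) into the Giambelli formula of Theorem~\ref{thm:Giambelli}, carry out the divided-difference computations, and reduce modulo the relations of Theorem~\ref{thm:F_4/T}, with the indecomposability of $Z_{123}$ and $Z_{1234}$ read off from the fact that they agree with $\gamma_3$ and $-\gamma_4$ modulo decomposables. Your added remarks on working over $\Q$ via torsion-freeness and on cross-checking against $(\ref{eqn:gamma_3(F_4)})$ and $(\ref{eqn:gamma_4(F_4)})$ are sound but do not change the method.
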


Consequently,  we obtain the following result. 
\begin{prop} \label{prop:1st_main4} 
In Theorem $\ref{thm:F_4/T}$, the relation between the ring  generators $\{ t_{1}, t_{2}, t_{3},
t_{4}, t,  \gamma_{3}, \gamma_{4}  \}$ and the Schubert classes  is given by 
     \begin{equation*} 
          \begin{array}{llll}   
       t_{1} &= -Z_{4},  \medskip \\
       t_{2} &= Z_{1} - Z_{4}, \medskip \\
       t_{3} &= -Z_{1} + Z_{2}  - Z_{4}, \medskip  \\
       t_{4} &= -Z_{2} + 2Z_{3} - Z_{4},  \medskip \\       
        t    &=  Z_{3} - 2Z_{4},  \medskip  \\
       \gamma_{3} & = Z_{123} - Z_{234} - 2Z_{243} + 3Z_{343}, \medskip \\ 
       \gamma_{4} & = Z_{1234} - 10Z_{1243} + 4Z_{1323} - Z_{3234} + 10Z_{3243} - 8Z_{4323}. \medskip 
          \end{array}
\end{equation*}  
Furthermore, we have 
     \begin{align*} 
         Z_{123} &= \gamma_{3} - 2t_{1}^3 + 3t_{1}^2 t - 2t_{1}t^2, \\
         Z_{1234} &= -\gamma_{4} + (t_{1} - 2t)\gamma_{3} + t_{1}^3t - t_{1}^2t^2 + 3t^4, 
      \end{align*} 
  and  we can take $Z_{1}, Z_{2}, Z_{3}, Z_{4}, Z_{123}, Z_{1234}$ as the  
  ring  generators of $H^*(F_{4}/T;\Z)$. 
\end{prop}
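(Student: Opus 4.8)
The plan is to assemble the proposition from pieces that have already been established in the section, so the proof is mostly a matter of bookkeeping rather than new computation. The degree-two relations $t_1 = -Z_4$, $t_2 = Z_1 - Z_4$, $t_3 = -Z_1 + Z_2 - Z_4$, $t_4 = -Z_2 + 2Z_3 - Z_4$ and $t = Z_3 - 2Z_4$ are exactly the content of \eqref{eqn:basis.F_4}, which follows from \eqref{eqn:t_i.F_4} together with the identity $c(\omega_i) = Z_i$ of Theorem \ref{thm:char.hom}. The expressions for $\gamma_3$ and $\gamma_4$ in terms of Schubert classes are \eqref{eqn:gamma_3(F_4)} and \eqref{eqn:gamma_4(F_4)}, obtained by applying Theorem \ref{thm:char.hom} to the polynomials $c_3$ and $c_4 - 2tc_3 + 8t^4$ (using the relations $2\gamma_3 = c_3$ and $3\gamma_4 = c_4 - 2tc_3 + 8t^4$ coming from Theorem \ref{thm:F_4/T}, the divided-difference tables computed above, and the fact that $H^*(F_4/T;\Z)$ is torsion free). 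The two displayed identities $Z_{123} = \gamma_3 - 2t_1^3 + 3t_1^2 t - 2t_1 t^2$ and $Z_{1234} = -\gamma_4 + (t_1 - 2t)\gamma_3 + t_1^3 t - t_1^2 t^2 + 3t^4$ are two of the ten formulas collected in Lemma \ref{lem:Schubert_polynom.F_4}. So the only genuinely new assertion in the proposition is the final sentence: that $Z_1, Z_2, Z_3, Z_4, Z_{123}, Z_{1234}$ generate $H^*(F_4/T;\Z)$ as a ring.

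For that last claim I would argue as follows. By Theorem \ref{thm:F_4/T}, $H^*(F_4/T;\Z)$ is generated as a ring by $t_1, t_2, t_3, t_4, t, \gamma_3, \gamma_4$. The relations in \eqref{eqn:basis.F_4} show that the subring generated by $Z_1, Z_2, Z_3, Z_4$ already contains all of $t_1, t_2, t_3, t_4, t$ (the system \eqref{eqn:basis.F_4} is triangular and can be inverted over $\Z$, giving $Z_1, \dots, Z_4$ back as integral combinations of the $t_i$, and conversely). Hence it suffices to show that $\gamma_3$ and $\gamma_4$ lie in the subring $R$ generated by $Z_1, Z_2, Z_3, Z_4, Z_{123}, Z_{1234}$. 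Since $t_1, t, \gamma_3 \in R$ would follow once $\gamma_3 \in R$, I first solve the identity $Z_{123} = \gamma_3 - 2t_1^3 + 3t_1^2 t - 2t_1 t^2$ from Lemma \ref{lem:Schubert_polynom.F_4} for $\gamma_3$, obtaining $\gamma_3 = Z_{123} + 2t_1^3 - 3t_1^2 t + 2t_1 t^2$, which visibly lies in $R$ because $Z_{123} \in R$ and $t_1, t \in R$. Then I solve $Z_{1234} = -\gamma_4 + (t_1 - 2t)\gamma_3 + t_1^3 t - t_1^2 t^2 + 3t^4$ for $\gamma_4$, getting $\gamma_4 = -Z_{1234} + (t_1 - 2t)\gamma_3 + t_1^3 t - t_1^2 t^2 + 3t^4$, which lies in $R$ since $Z_{1234}, \gamma_3, t_1, t$ all do. Therefore $R = H^*(F_4/T;\Z)$.

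The step I expect to be the main obstacle is none of the above bookkeeping but rather the verification underlying Lemma \ref{lem:Schubert_polynom.F_4} — namely that the ten Schubert classes appearing in \eqref{eqn:gamma_3(F_4)} and \eqref{eqn:gamma_4(F_4)} really are given by those explicit polynomials in $t_1, t, \gamma_3, \gamma_4$. That is carried out via the Giambelli formula (Theorem \ref{thm:Giambelli}), which requires knowing $d = \prod_{\alpha \in \Delta^+}\alpha$, the reduced word for $w_0$ listed above, and then computing $c(\Delta_{w^{-1}w_0}(d/|W|))$ for each relevant $w$; expressing the resulting polynomial back in the generators $t_1, t, \gamma_3, \gamma_4$ uses the defining relations $c_1 = 2t$, $c_2 = 2t^2$, $c_3 = 2\gamma_3$, etc. of Theorem \ref{thm:F_4/T}. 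Since that lemma is stated separately and may be assumed, in the proof of the proposition itself I would simply invoke it. The indecomposability of $Z_{123}$ and $Z_{1234}$ — and hence the fact that these particular Schubert classes, rather than some other degree-$6$ and degree-$8$ classes, can serve as ring generators — is recorded in Lemma \ref{lem:Schubert_polynom.F_4} as well: modulo decomposables, $Z_{123} \equiv \gamma_3$ and $Z_{1234} \equiv -\gamma_4$, which is exactly what licenses swapping $\gamma_3, \gamma_4$ for $Z_{123}, Z_{1234}$ in the generating set.
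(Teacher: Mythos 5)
Your proposal is correct and follows essentially the same route as the paper: the degree-two identities come from \eqref{eqn:basis.F_4}, the expressions for $\gamma_{3}$ and $\gamma_{4}$ are \eqref{eqn:gamma_3(F_4)} and \eqref{eqn:gamma_4(F_4)} obtained via Theorem \ref{thm:char.hom} and torsion-freeness, and the generation claim is exactly the intended consequence of inverting the two formulas of Lemma \ref{lem:Schubert_polynom.F_4} to express $\gamma_{3}$ and $\gamma_{4}$ in terms of $Z_{123}$, $Z_{1234}$ and the degree-two classes. Your explicit spelling-out of the final generation argument (which the paper leaves implicit behind ``Consequently'') is a faithful reconstruction, not a deviation.
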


\section{The Chow rings of $\mathrm{SO}(n), \mathrm{Spin}(n), \mathrm{G}_{2}$ and $\mathrm{F}_{4}$}
In this section, using our description of  the integral cohomology rings of the  flag manifolds of types
$B_{n}, D_{n}, G_{2}$ and $F_{4}$ (see Sections 4,  5 and  6)  and a  remark of Grothendieck, we compute 
the Chow rings of the corresponding complex algebraic groups. 
  
As in Section 2, let $K$ be a compact simply connected simple Lie group,  $T$ its maximal torus,    
$G = K^{\C}$ the complexification of $K$ and  $B$  a Borel subgroup of $G$ containing $T$. 
Let $A^{i}(\cdot)$ denote the Chow group  of codimension $i$ cycles  up to rational equivalence
and $A( \cdot) = \bigoplus_{i \geq 0} A^{i}(\cdot)$. Given any character $\chi$ of $B$, that is, 
a homomorphism  of $B$ into $\C^{\times}$, we have the  associated  line bundle $L_{\chi}$ over $G/B$, 
which  defines  an element of $A^{1}(G/B)$, denoted 
by $c(\chi)$.  This induces a homomorphism  
  \[  c:  \hat{B} \longrightarrow   A^{1}(G/B),   \]
where $\hat{B}$ denotes the character group of $B$.  Extending this homomorphism $c$ by 
multiplicativity to  the symmetric algebra $S(\hat{B})$ of $\hat{B}$, one obtains  a  
homomorphism    
  \begin{equation}  \label{eqn:ch.hom2}
        c:  S(\hat{B}) \longrightarrow  A(G/B),   
  \end{equation} 
which is also called the {\it characteristic homomorphism}.

Then  Grothendieck's remark (\cite{Gro1}, p.21,  R{\scriptsize EMARQUES} $2^{\circ}$) allows us to obtain $A(G)$ as 
the quotient of $A(G/B)$ by   the ideal generated by $c(\hat{B})$. Denote by $T_{G}: A(G/B) \longrightarrow 
A(G)$ the canonical map onto the quotient. 
It is known (\cite{Gro1}, Lemme 10) that the Chow ring $A(G/B)$ of $G/B$ is isomorphic to the integral 
cohomology ring $H^*(G/B;\Z)$ of $G/B$. Under this isomorphism,  the Schubert variety $X_{w_{0}w}$ corresponds 
to the Schubert class $Z_{w}$  and the above characteristic 
homomorphism (\ref{eqn:ch.hom2}) coincides with  the characteristic homomorphism (\ref{eqn:ch.hom}).

Thus, in order to determine the Chow ring $A(G)$, we need  only to  compute the quotient ring of 
$H^*(K/T;\Z)$  by the ideal generated   by  $c(H^{2}(BT;\Z))$.  Since we assume that $K$ is simply 
connected,  $H^{2}(BT;\Z)$ is isomorphic to $H^{2}(K/T;\Z)$.  Therefore  we compute the   quotient ring  
$H^*(K/T;\Z)/(H^2(K/T;\Z))$.  We will  show  how to do this for  $F_{4}$. By Theorem 
\ref{thm:F_4/T},  we have 
     \begin{equation*} 
         \begin{array}{cccc} 
             H^*(F_4/T;\Z)/(t_{1}, t_{2}, t_{3}, t_{4}, t) 
                      & = \Z[\gamma_{3}, \gamma_{4}]/(2\gamma_{3}, 3\gamma_{4}, 
                             \gamma_{3}^2, \gamma_{4}^3). 
         \end{array} 
     \end{equation*} 
Taking  Proposition \ref{prop:1st_main4} into account, we can replace $\gamma_{3}, \gamma_{4}$ 
with $Z_{123}, Z_{1234}$ respectively.  Thus we obtain the following 

\begin{thm}  \label{thm:2nd.main4} 
   If $G$ is of type $\mathrm{F}_{4}$, we have
   \[   A(\mathrm{F}_{4})   = \Z[X_{3}, X_{4}]/(2X_{3}, 3X_{4}, X_{3}^2, X_{4}^3),  \]
    where  $X_{3}$ $($resp. $X_{4}$$)$ is the image under  $T_{G}$ of the element of $A(G/B)$ 
    defined by the Schubert variety $X_{w_{0}s_{1}s_{2}s_{3}}$ $($resp. $X_{w_{0}s_{1}s_{2}s_{3}s_{4}}$$)$. 
\end{thm}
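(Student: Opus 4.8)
The plan is to combine Grothendieck's remark with the Borel presentation of $H^*(F_4/T;\Z)$ recorded in Theorem \ref{thm:F_4/T} and the identification of $\gamma_3, \gamma_4$ with Schubert classes in Proposition \ref{prop:1st_main4}. First I would invoke the general principle explained at the start of this section: since $K$ is simply connected, $H^2(BT;\Z)\cong H^2(K/T;\Z)$, and $A(G)$ is the quotient of $A(G/B)\cong H^*(G/B;\Z)$ by the ideal generated by the image of the characteristic homomorphism, which is exactly the ideal generated by $H^2(K/T;\Z)$. So the computation reduces to evaluating the quotient ring $H^*(F_4/T;\Z)/(t_1,t_2,t_3,t_4,t)$, where $t_1,\dots,t_4,t$ span $H^2(F_4/T;\Z)$ over $\Z$.

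Next I would carry out the quotient calculation starting from the presentation in Theorem \ref{thm:F_4/T}. Setting $t_1=\dots=t_4=t=0$, each of the Chern classes $c_i=e_i(t_1,\dots,t_4)$ becomes $0$, so the relations $\rho_1,\rho_2$ disappear, $\rho_3$ becomes $2\gamma_3$, $\rho_4$ becomes $-3\gamma_4$ (up to sign), $\rho_6$ becomes $\gamma_3^2$, $\rho_8$ becomes $3\gamma_4^2$, and $\rho_{12}$ becomes $\gamma_4^3$. Since $3\gamma_4$ already kills $3\gamma_4^2$, the relation coming from $\rho_8$ is redundant, and we are left with $\Z[\gamma_3,\gamma_4]/(2\gamma_3,\,3\gamma_4,\,\gamma_3^2,\,\gamma_4^3)$, exactly as displayed in the excerpt. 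I would be slightly careful here to check that no further relations are hidden and that the listed generators of the ideal indeed suffice; this is the only place where a small verification is needed, but it is mechanical given the explicit $\rho_i$.

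Then I would translate back to Schubert classes. By Proposition \ref{prop:1st_main4}, $\gamma_3$ and $\gamma_4$ can be taken as ring generators together with the degree-two classes, and moreover $Z_{123}\equiv\gamma_3$ and $Z_{1234}\equiv -\gamma_4$ modulo decomposables (indeed modulo the ideal $(t_1,\dots,t_4,t)$ the formulas in Proposition \ref{prop:1st_main4} give $Z_{123}=\gamma_3$ and $Z_{1234}=-\gamma_4$ exactly, since all the other terms involve $t_i$ or $t$). Hence in the quotient ring we may replace $\gamma_3$ by $Z_{123}$ and $\gamma_4$ by $-Z_{1234}$; writing $X_3,X_4$ for the images of these Schubert classes under $T_G$, and noting that the relations $2X_3,3X_4,X_3^2,X_4^3$ are unaffected by the sign on $X_4$, we obtain $A(\mathrm{F}_4)=\Z[X_3,X_4]/(2X_3,3X_4,X_3^2,X_4^3)$. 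Finally, recalling that under the isomorphism $A(G/B)\cong H^*(G/B;\Z)$ the Schubert variety $X_{w_0w}$ corresponds to the Schubert class $Z_w$, the classes $X_3,X_4$ are precisely the images under $T_G$ of the cycles defined by $X_{w_0 s_1s_2s_3}$ and $X_{w_0 s_1s_2s_3s_4}$, completing the proof. The only genuine subtlety — and the step I would treat most carefully — is confirming the redundancy of $\rho_8$ and that the ideal generated by the five degree-two classes is correctly computed, everything else being a direct substitution.
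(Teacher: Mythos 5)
Your proposal is correct and follows essentially the same route as the paper: reduce via Grothendieck's remark to computing $H^*(F_4/T;\Z)/(t_1,t_2,t_3,t_4,t)$ from the Toda--Watanabe presentation in Theorem \ref{thm:F_4/T}, obtaining $\Z[\gamma_3,\gamma_4]/(2\gamma_3,3\gamma_4,\gamma_3^2,\gamma_4^3)$, and then use Proposition \ref{prop:1st_main4} to replace $\gamma_3,\gamma_4$ by the Schubert classes $Z_{123},Z_{1234}$. Your extra care about the redundancy of $\rho_8$ and the harmless sign $Z_{1234}\equiv-\gamma_4$ modulo the degree-two ideal only makes explicit what the paper leaves implicit.
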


In a similar way, we can compute the Chow rings of $\mathrm{SO}(n), \mathrm{Spin}(n)$ and $\mathrm{G}_{2}$. 
 The results are summarized  as follows. 

\begin{thm}  \label{thm:2nd.main1} 
    If $G$ is of type $B_{n} \, (\mathrm{SO}(2n+1)$ or $\mathrm{Spin}(2n+1))$ and $x_{i} \; (1 \leq i \leq n)$
    is the element of $A(G/B)$ defined by the Schubert variety $X_{w_{0}s_{n-i+1} \cdots s_{n-1}s_{n}}$. 
    Then we have 
  \begin{align*} 
      A(\mathrm{SO}(2n+1))   &= \Z[X_{1}, X_{3}, X_{5}, \ldots, X_{2[\frac{n+1}{2}]-1}]
                                     /(2X_{i}, \; X_{i}^{p_{i}}), \\
      A(\mathrm{Spin}(2n+1)) &= \Z[ X_{3}, X_{5}, \ldots, X_{2[\frac{n+1}{2}]-1}]
                                     /(2X_{i}, \; X_{i}^{p_{i}}),       
  \end{align*} 
 where  
           \[  p_{i} = 2^{[\log_{2}\frac{n}{i}] + 1},  \quad X_{i} = T_{G}(x_{i}).   \]
\end{thm}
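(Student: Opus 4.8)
The plan is to follow the same recipe already used for $F_{4}$ in Theorem \ref{thm:2nd.main4}: reduce the computation of $A(\mathrm{SO}(2n+1))$ (resp. $A(\mathrm{Spin}(2n+1))$) to that of the quotient ring $H^{*}(SO(2n+1)/T^{n};\Z)/(H^{2}(SO(2n+1)/T^{n};\Z))$, and then identify the surviving generators with Schubert classes by means of Proposition \ref{prop:1st_main1}. First I would invoke Grothendieck's remark together with Lemme 10 of \cite{Gro1} exactly as in Section 7: since $K = Spin(2n+1)$ is simply connected, $H^{2}(BT^{n};\Z)\cong H^{2}(K/T^{n};\Z)$ is spanned by the $t_{i}$, so $A(\mathrm{Spin}(2n+1)) = H^{*}(SO(2n+1)/T^{n};\Z)/(t_{1},\dots,t_{n})$. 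By Theorem \ref{thm:SO(2n+1)/T^n}, killing all the $t_{i}$ kills every $c_{i}=e_{i}(t_{1},\dots,t_{n})$, hence the relations $c_{i}-2\gamma_{i}$ become $2\gamma_{i}=0$, and the quotient is
\[
  \Z[\gamma_{1},\dots,\gamma_{n}]\Big/\Big(2\gamma_{i}\ (1\le i\le n),\ \gamma_{2k}+\sum_{i=1}^{2k-1}(-1)^{i}\gamma_{i}\gamma_{2k-i}\ (1\le k\le n)\Big).
\]

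Next I would analyze this $\F_{2}$-algebra. Since every $\gamma_{i}$ is $2$-torsion, the quadratic relation $\gamma_{2k}+\sum_{i=1}^{2k-1}(-1)^{i}\gamma_{i}\gamma_{2k-i}=0$ simplifies mod $2$ to $\gamma_{2k}=\sum_{i=1}^{2k-1}\gamma_{i}\gamma_{2k-i}$, and the off-diagonal terms pair up so that the sum collapses to $\gamma_{2k}=\gamma_{k}^{2}$. Thus all even-indexed $\gamma$'s are expressed as squares of lower ones, and iterating, every $\gamma_{j}$ with $j$ not odd is a power $\gamma_{i}^{2^{r}}$ where $i$ is the odd part of $j$ and $2^{r}=j/i$. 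So the ring is generated over $\F_{2}$ by the odd-indexed classes $\gamma_{1},\gamma_{3},\gamma_{5},\dots,\gamma_{2[(n+1)/2]-1}$, which are algebraically independent except for the truncation coming from $\gamma_{j}=0$ for $j>n$: the relation $\gamma_{i}^{2^{r}}=0$ first bites when $i\cdot 2^{r}>n$, i.e. when $2^{r}>n/i$, i.e. at $r=[\log_{2}(n/i)]+1$, giving $\gamma_{i}^{p_{i}}=0$ with $p_{i}=2^{[\log_{2}(n/i)]+1}$. This yields $A(\mathrm{Spin}(2n+1))=\Z[X_{3},X_{5},\dots]/(2X_{i},X_{i}^{p_{i}})$. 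For $A(\mathrm{SO}(2n+1))$ the only change is that $SO(2n+1)$ is \emph{not} simply connected; following Marlin one uses the covering $Spin(2n+1)\to SO(2n+1)$ and the fact that $A(\mathrm{SO}(2n+1))$ is obtained from the flag-manifold cohomology by killing only the sublattice $c(\hat B')$ where $\hat B'$ is the character lattice of the Borel of $\mathrm{SO}(2n+1)$ — one index $2$ smaller, which fails to contain $\omega_{n}$ but contains $2\omega_{n}$. Tracking this through Proposition \ref{prop:1st_main1}, $\gamma_{1}=Z_{n}=c(\omega_{n})$ is now \emph{not} in the image of the characteristic homomorphism, so it survives, producing the extra generator $X_{1}$ of $A(\mathrm{SO}(2n+1))$. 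Finally, identifying $\gamma_{i}=Z_{n-i+1,\dots,n-1,n}$ (Proposition \ref{prop:1st_main1}) and transporting through Lemme 10 of \cite{Gro1}, $\gamma_{i}$ corresponds to the Schubert variety $X_{w_{0}s_{n-i+1}\cdots s_{n}}$, and setting $X_{i}=T_{G}(x_{i})$ completes the statement.

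The main obstacle, I expect, is the $SO$ versus $Spin$ bookkeeping — namely justifying precisely which characters one must quotient by in the non-simply-connected case and checking that the resulting extra class is exactly $\gamma_{1}=Z_{n}$ and carries no further relation beyond $2X_{1}=0$ and $X_{1}^{p_{1}}=0$ with $p_{1}=2^{[\log_{2}n]+1}$ (for $\mathrm{SO}(2n+1)$, $\gamma_{1}^{2}=\gamma_{2}$ is already among the generators coming from $\gamma_{1}$ itself, so $X_{1}$ is genuinely a new polynomial generator and the only subtlety is its nilpotence degree). A secondary, purely combinatorial point to verify carefully is the telescoping $\gamma_{2k}=\gamma_{k}^{2}$ in characteristic $2$ and the exact form of the exponent $p_{i}$; both are elementary once one works mod $2$, but they must be stated cleanly because they are what make the final presentation finite. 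Everything else is a direct transcription of the $F_{4}$ argument in Section 7.
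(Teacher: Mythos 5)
Your overall strategy is the paper's own: the paper proves only the $F_{4}$ case in detail and disposes of $B_{n}$ with ``in a similar way,'' so the content you supply --- killing the degree-two classes in the Borel presentation of Theorem \ref{thm:SO(2n+1)/T^n}, reducing the quadratic relations mod $2$ to $\gamma_{2k}=\gamma_{k}^{2}$, and reading off the truncation exponents $p_{i}=2^{[\log_{2}(n/i)]+1}$ from $\gamma_{j}=0$ for $j>n$ --- is exactly what is needed, and that part of the algebra is correct, as is the identification of the surviving $\gamma_{i}$ with the Schubert classes $Z_{n-i+1,\dots,n-1,n}$ via Proposition \ref{prop:1st_main1}.

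However, you have the two groups interchanged at the decisive first step. For $K=Spin(2n+1)$ the lattice $H^{2}(BT;\Z)\cong H^{2}(K/T;\Z)$ is the full weight lattice $\Z\langle\omega_{1},\dots,\omega_{n}\rangle$, which is \emph{not} spanned by $t_{1},\dots,t_{n}$: it contains $\omega_{n}=\tfrac{1}{2}(t_{1}+\cdots+t_{n})$, whose image is $\gamma_{1}=Z_{n}$, an index-$2$ enlargement of $\Z\langle t_{1},\dots,t_{n}\rangle$. Hence $A(\mathrm{Spin}(2n+1))$ is the quotient by $(Z_{1},\dots,Z_{n})=(t_{1},\dots,t_{n},\gamma_{1})$, and this is what eliminates $X_{1}$; whereas the quotient by $(t_{1},\dots,t_{n})$ alone --- the ring you actually compute, in which $\gamma_{1}$ survives as a nonzero $2$-torsion class because only $2\gamma_{1}=c_{1}$ lies in that ideal --- is $A(\mathrm{SO}(2n+1))$, since the character lattice of the Borel of $\mathrm{SO}(2n+1)$ is precisely the sublattice $\Z\langle t_{1},\dots,t_{n}\rangle$ not containing $\omega_{n}$. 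As written, your sentence ``this yields $A(\mathrm{Spin}(2n+1))=\Z[X_{3},X_{5},\dots]$'' silently drops $\gamma_{1}$ from a ring in which it is visibly nonzero, and your closing paragraph then re-derives for $\mathrm{SO}$ a class that your first displayed quotient already contained. The repair is a relabelling plus one extra line (impose $\gamma_{1}=0$ to pass from $\mathrm{SO}(2n+1)$ to $\mathrm{Spin}(2n+1)$, noting that then $\gamma_{2^{r}}=\gamma_{1}^{2^{r}}=0$ as well), but as stated the derivation does not establish the $\mathrm{Spin}$ presentation.
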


\begin{thm}  \label{thm:2nd.main2} 
   If $G$ is of type $D_{n} \, (\mathrm{SO}(2n)$ or $\mathrm{Spin}(2n))$ and $x_{1}$ 
   $($resp. $x_{i} \, (2 \leq i \leq n-1))$ is the element of $A(G/B)$ defined by the Schubert 
   variety  $X_{w_{0}s_{n}}$ $($resp. $X_{w_{0}s_{n-i} \ldots s_{n-2}s_{n}} \, 
   (2 \leq i \leq n-1))$.  Then we have 
  \begin{align*} 
      A(\mathrm{SO}(2n))   &= \Z[X_{1}, X_{3}, X_{5}, \ldots, X_{2[\frac{n}{2}]-1}]
                                         /(2X_{i}, \; X_{i}^{p_{i}}), \\
      A(\mathrm{Spin}(2n)) &= \Z[ X_{3}, X_{5}, \ldots, X_{2[\frac{n}{2}]-1}]
                                         /(2X_{i}, \; X_{i}^{p_{i}}),       
  \end{align*} 
  where 
      \[  p_{i} = 2^{[\log_{2}\frac{n-1}{i}] + 1},  \quad X_{i} = T_{G}(x_{i}).   \] 
\end{thm}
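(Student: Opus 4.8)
The plan is to follow exactly the strategy used for $\mathrm{F}_4$ in Theorem \ref{thm:2nd.main4}: compute the quotient ring $H^*(SO(2n)/T^n;\Z)/(H^2(SO(2n)/T^n;\Z))$, identify it with $A(G)$ via Grothendieck's remark, and then re-express the generators in terms of Schubert classes using Proposition \ref{prop:1st_main2}. First I would invoke the isomorphism $A(G/B)\cong H^*(G/B;\Z)$ together with Grothendieck's remark (\cite{Gro1}, p.21), which gives $A(G)\cong H^*(K/T;\Z)/(c(H^2(BT;\Z)))$; since $K$ is simply connected (so one works with $\mathrm{Spin}(2n)$), $H^2(BT^n;\Z)\cong H^2(K/T;\Z)$, and the ideal generated by $c(H^2(BT;\Z))$ is exactly the ideal $(t_1,\dots,t_n)$ in the Borel presentation of Theorem \ref{thm:SO(2n)/T^n}. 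For $SO(2n)$ rather than $\mathrm{Spin}(2n)$, the relevant torus and character lattice differ, which accounts for the extra generator $X_1$.

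Next I would carry out the explicit quotient. Setting $t_1=\dots=t_n=0$ in the presentation of Theorem \ref{thm:SO(2n)/T^n} kills all $c_i=e_i(t_1,\dots,t_n)$, so the relations $c_i-2\gamma_i$ collapse to $2\gamma_i=0$ $(1\le i\le n-1)$ and $c_n=0$ becomes trivial, while the relations $\gamma_{2k}+\sum_{i=1}^{2k-1}(-1)^i\gamma_i\gamma_{2k-i}$ survive. Thus
\[
H^*(SO(2n)/T^n;\Z)/(t_1,\dots,t_n)=\Z[\gamma_1,\dots,\gamma_{n-1}]\big/\big(2\gamma_i,\ \gamma_{2k}+\textstyle\sum_{i=1}^{2k-1}(-1)^i\gamma_i\gamma_{2k-i}\big).
\]
Because every $\gamma_i$ is $2$-torsion, the sign $(-1)^i$ is irrelevant mod $2$, and the quadratic relations become $\gamma_{2k}=\sum_{i=1}^{2k-1}\gamma_i\gamma_{2k-i}=\gamma_k^2+2\sum_{i<k}\gamma_i\gamma_{2k-i}=\gamma_k^2$ in this ring; hence each even-indexed $\gamma_{2k}$ is expressed as $\gamma_k^2$, and iterating, every $\gamma_j$ is a power $\gamma_{j'}^{2^a}$ of an odd-indexed generator $\gamma_{j'}$. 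Therefore the ring is a polynomial algebra over $\F_2$ on the odd-indexed $\gamma_i$, $i$ odd, $1\le i\le n-1$, i.e. on $\gamma_1,\gamma_3,\dots,\gamma_{2[\frac{n}{2}]-1}$, modulo the relations forcing $\gamma_i^{p_i}=0$ where $p_i$ is the smallest power of $2$ with $i\cdot p_i>n-1$, that is $p_i=2^{[\log_2\frac{n-1}{i}]+1}$. This bookkeeping — tracking which even $\gamma$'s reduce to which power of which odd $\gamma$, and pinning down the exponent $p_i$ from the condition $\gamma_i=0$ for $i\ge n$ — is the one genuinely fiddly step, and it is the main obstacle; it is essentially the same combinatorial identification that Marlin performed, now streamlined by the Borel presentation.

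Finally I would translate back to Schubert classes. By Proposition \ref{prop:1st_main2}, $\gamma_1=Z_n$ and $\gamma_i=Z_{n-i,\dots,n-2,n}$ for $2\le i\le n-1$, i.e. $\gamma_i$ is the class of the Schubert variety indexed by $w_0s_{n-i}\cdots s_{n-2}s_n$ (and $\gamma_1$ by $w_0 s_n$) after applying $T_G$. Setting $X_i=T_G(x_i)$ with $x_i$ as in the statement, the odd-indexed $\gamma_i$ become the $X_i$, and the presentation above reads $\Z[X_1,X_3,\dots,X_{2[\frac{n}{2}]-1}]/(2X_i,X_i^{p_i})$ for $A(\mathrm{SO}(2n))$; dropping the generator $X_1$ (which is not present in the $\mathrm{Spin}$ character lattice, since $\omega_1$ — equivalently $t_1+\cdots$ — already lies in the root lattice only after the appropriate adjustment, so $\gamma_1=c_1/2$ is in the image of $c$ for $\mathrm{Spin}(2n)$) yields the stated presentation of $A(\mathrm{Spin}(2n))$. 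The proof of Theorem \ref{thm:2nd.main1} for types $B_n$ is word-for-word parallel, using Proposition \ref{prop:1st_main1} and Theorem \ref{thm:SO(2n+1)/T^n} in place of their $D_n$ counterparts, with the bound $i\cdot p_i>n$ (rather than $n-1$) giving $p_i=2^{[\log_2\frac{n}{i}]+1}$.
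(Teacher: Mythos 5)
Your proposal follows exactly the paper's strategy: the paper proves only the $F_{4}$ case in detail (quotient the Borel presentation by $(t_{1},\dots,t_{n})$, then substitute the Schubert-class expressions from the relevant proposition) and disposes of the $B_{n}$, $D_{n}$, $G_{2}$ cases with ``in a similar way,'' so you have in fact supplied more of the $D_{n}$ bookkeeping than the paper does, and your reduction $\gamma_{2k}\equiv\gamma_{k}^{2}$ and the exponent $p_{i}$ are correct. The only blemish is the parenthetical about $\mathrm{Spin}(2n)$: the weight that puts $\gamma_{1}=c_{1}/2$ in the image of $c$ is $\omega_{n}$ (since $c_{1}=2\omega_{n}$ by (\ref{eqn:weights.SO(2n)})), not $\omega_{1}$, but this does not affect the argument.
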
 

\begin{thm}  \label{thm:2nd.main3} 
   If $G$ is of type $\mathrm{G}_{2}$, we have 
   \[   A(\mathrm{G}_{2})   = \Z[X_{3}]/(2X_{3}, X_{3}^2),   \] 
    where $X_{3}$ is the image under $T_{G}$  of the element of $A(G/B)$ defined by the 
    Schubert variety  $X_{w_{0}s_{1}s_{2}s_{1}}$.  
\end{thm}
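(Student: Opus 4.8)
The plan is to imitate exactly the template used for $\mathrm{F}_{4}$ in the proof of Theorem \ref{thm:2nd.main4}. By Grothendieck's remark, $A(\mathrm{G}_{2})$ is the quotient of $A(G/B)$ by the ideal generated by $c(\hat{B})$. Via the isomorphism $A(G/B) \cong H^*(G/B;\Z) = H^*(G_{2}/T;\Z)$ --- under which a Schubert variety $X_{w_{0}w}$ corresponds to the Schubert class $Z_{w}$ and the algebraic characteristic homomorphism matches the topological one --- and using that $G_{2}$ is simply connected, so that $H^2(BT;\Z) \cong H^2(G_{2}/T;\Z)$, this reduces to computing the quotient ring
\[
A(\mathrm{G}_{2}) \cong H^*(G_{2}/T;\Z)/(H^2(G_{2}/T;\Z)) = H^*(G_{2}/T;\Z)/(t_{1}, t_{2}, t_{3}).
\]

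First I would substitute the Borel presentation of Theorem \ref{thm:G_2/T}. Setting $t_{1} = t_{2} = t_{3} = 0$ forces $c_{1} = c_{2} = c_{3} = 0$, so the relations $\rho_{1} = c_{1}$ and $\rho_{2} = c_{2}$ become trivial, the relation $\rho_{3} = c_{3} - 2\gamma_{3}$ collapses to $2\gamma_{3} = 0$, and $\rho_{6} = \gamma_{3}^{2}$ is unchanged. Hence
\[
H^*(G_{2}/T;\Z)/(t_{1}, t_{2}, t_{3}) = \Z[\gamma_{3}]/(2\gamma_{3}, \gamma_{3}^{2}).
\]

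Next I would rewrite the generator in Schubert-class language. By Proposition \ref{prop:1st_main3} we have $\gamma_{3} = -Z_{121}$ in $H^*(G_{2}/T;\Z)$, so the image of $\gamma_{3}$ in the quotient equals $-X_{3}$, where $X_{3} = T_{G}(x)$ and $x \in A^{3}(G/B)$ is the class of the Schubert variety $X_{w_{0}s_{1}s_{2}s_{1}}$ (recall $Z_{w}$ is the Poincar\'{e} dual of $[X_{w_{0}w}]$). Since the sign is immaterial for the generated ideal, $(2\gamma_{3}, \gamma_{3}^{2}) = (2X_{3}, X_{3}^{2})$, and we conclude $A(\mathrm{G}_{2}) = \Z[X_{3}]/(2X_{3}, X_{3}^{2})$, as claimed.

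I expect no genuine obstacle here: once Theorem \ref{thm:G_2/T} and Proposition \ref{prop:1st_main3} are in hand, the argument is purely mechanical, and the $\mathrm{F}_{4}$ case already exhibits it in full. The only points that demand attention are bookkeeping ones --- verifying the dictionary between Schubert varieties and Schubert classes under $A(G/B) \cong H^*(G/B;\Z)$, observing that $\mathrm{G}_{2}$ has trivial center so that ``simply connected'' and ``adjoint'' coincide and there is only one group to treat, and checking that replacing $\gamma_{3}$ by $\pm Z_{121}$ does not alter the defining ideal.
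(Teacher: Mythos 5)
Your proposal is correct and follows exactly the route the paper itself takes: the paper works out the $\mathrm{F}_{4}$ case in detail and states that $\mathrm{G}_{2}$ is handled ``in a similar way,'' which is precisely your computation of $H^*(G_{2}/T;\Z)/(t_{1},t_{2},t_{3}) = \Z[\gamma_{3}]/(2\gamma_{3},\gamma_{3}^{2})$ followed by the substitution $\gamma_{3} = -Z_{121}$ from Proposition \ref{prop:1st_main3}. Your side remarks (the sign being immaterial for the presentation, and the triviality of the center of $G_{2}$) are accurate bookkeeping points consistent with the paper's setup.
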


We observe that our results obtained in this section agree with those obtained in Marlin \cite{Mar1}.



\begin{thebibliography}{99}

\bibitem{BGG1} I.N. Bernstein, I.M. Gelfand and S.I. Gelfand, 
\newblock{\em Schubert cells,  and the cohomology of the spaces $G/P$},
\newblock{London Math. Soc. Lecture Note Ser. \textbf{69} Cambridge Univ. Press, 1982, 115-140.}


 \bibitem{Bor1} A. Borel,
  \newblock{\em Sur la cohomologie des espaces fibr\'{e}s principaux et des 
   espaces homog\`{e}nes de groupes de Lie compacts},
   \newblock{Ann. of Math. \textbf{57} (1953), 115-207.}




\bibitem{Bott-Sam1} R. Bott and H. Samelson,
\newblock{\em The integral cohomology ring of $G/T$},
\newblock{Proc. Nat. Acad. Sci. USA  \textbf{41} (1955), 490-493.}



\bibitem{Bour1} N. Bourbaki,
\newblock{\em Groupes et Alg\`{e}bre de Lie. Chapitre $4,5$ et $6$},
\newblock{Masson, Paris, 1981.}


\bibitem{Che1} C. Chevalley,
\newblock{Sur les d\'{e}composition cellulaires des espaces $G/B$},
\newblock{Algebraic Groups and their Generalizations: Classical Methods (W. Haboush, ed.),
          Proc. Sympos. Pure Math.,  \textbf{56},  Part 1, Amer. Math. Soc., 1994, 1-23.}



\bibitem{Dem1} M. Demazure,
\newblock{\em Invariants sym\'{e}triques entiers des groupes de Weyl et torsion},
\newblock{Invent. Math. \textbf{21} (1973), 287-301.}



\bibitem{Duan1} H. Duan,
\newblock{\em Multiplicative rule of Schubert classes},
\newblock{Invent. Math. \textbf{159} (2005), 407-436.}

\bibitem{DZ1} H. Duan and X. Zhao, 
\newblock{\em The Chow rings of generalized Grassmannians},
\newblock{arXiv:math.AG/0511332.}


\bibitem{Gro1} A. Grothendieck, 
\newblock{\em Torsion homologique et sections rationnelles},
\newblock{Expos\'{e} 5 in Anneaux de Chow et applications, S\'{e}minaire C. Chevalley, 
          1958, 1-29,  Multigraphi\'{e}, Secr\'{e}tariat math\'{e}matique, Paris.}


\bibitem{Hil1} H. Hiller, 
\newblock{\em Schubert calculus of a Coxeter group},
\newblock{Enseign. Math. (2)  \textbf{27} (1981), 57-84.}



\bibitem{Ili-Man1} A. Iliev and L. Manivel, 
\newblock{\em The Chow ring of the Cayley planes},
\newblock{Compos.  Math. \textbf{141} (2005), 146-160.}


\bibitem{Kaji-Nak1} S. Kaji and M. Nakagawa 
\newblock{\em The Chow rings of the algebraic  groups  $\mathrm{E}_{6}$ and $\mathrm{E}_{7}$}, 
\newblock{arXiv:math.AT/07093702.}

\bibitem{Mar1} R. Marlin,
\newblock{\em Anneaux de Chow des groupes algebriques $\mathrm{SO}(n), \mathrm{Spin}(n), \mathrm{G}_{2}$ et $\mathrm{F}_{4}$},
\newblock{Publications Math. d'Orsay, 95-7419 (1974)}.

  

\bibitem{Nak1} M. Nakagawa, 
\newblock{\em The integral cohomology ring of $E_{7}/T$}, 
\newblock{J. Math. Kyoto Univ. \textbf{41} (2001), 303-321.}

\bibitem{Nak2} M. Nakagawa, 
\newblock{\em The integral cohomology ring of $E_{8}/T$}, 
\newblock{in preparation.}




    
\bibitem{Pra1} P. Pragacz,  
\newblock{\em Multiplying Schubert classes}, 
\newblock{Topics in Cohomological Studies of Algebraic Varieties, 163-174, 
          Trends in Math., Birkh\"{a}user Verlag Basel, 2005.}


\bibitem{Pra2} P. Pragacz,  
\newblock{\em Algebro-geometric applications of Schur S- and Q-polynomials}, 
\newblock{Topics in invariant theory (M.P. Malliavin, ed.), Lecture Notes in Math., 1478, 130-191,  
              Springer Verlag, 1991.}



\bibitem{Toda1} H. Toda,
\newblock{\em On the cohomology ring of some homogeneous spaces},
\newblock{ J. Math. Kyoto Univ. \textbf{15} (1975), 185-199.}



\bibitem{Toda-Wat1} H. Toda and T. Watanabe, 
\newblock{\em The integral cohomology ring of $F_{4}/T$ and $E_{6}/T$}, 
\newblock{J. Math. Kyoto Univ. \textbf{14} (1974), 257-286.} 

    
\end{thebibliography}
\end{document}